\documentclass[12pt]{amsart}
\usepackage{amsmath,amssymb,mathtools,tikz-cd}
\newcommand{\II}{\mathbb{I}}
\newcommand{\IE}{\mathbb{E}}

\newcommand{\IN}{\mathbb{N}}
\newcommand{\IR}{\mathbb{R}}
\newcommand{\IC}{\mathbb{C}}
\newcommand{\IB}{\mathbb{B}}
\newcommand{\IK}{\mathbb{K}}
\newcommand{\IM}{\mathbb{M}}
\newcommand{\IP}{\mathbb{P}}
\newcommand{\cH}{\mathcal{H}}

\newcommand{\ve}{\varepsilon}
\newcommand{\vp}{\varphi}
\newcommand{\acts}{\curvearrowright}
\newcommand{\Iin}{\mathbf{1}}
\newcommand{\Univ}{\mathrm{Univ}}
\newcommand{\Bad}{\mathrm{Bad}}

\newcommand{\op}{\mathrm{op}}
\newcommand{\Grass}{\mathrm{Grass}}
\newcommand{\Isom}{\mathrm{Isom}}
\newcommand{\Cstar}{\mathrm{C}^*\!}

\DeclareMathOperator{\Ad}{Ad}
\renewcommand{\Im}{\operatorname{Im}}
\DeclareMathOperator{\supp}{supp}
\DeclareMathOperator{\Ball}{Ball}
\DeclareMathOperator{\diag}{diag}
\DeclareMathOperator{\Width}{Width}
\DeclareMathOperator{\Roe}{\mathrm{C}^*_{\mathrm{u}}\!}
\DeclareMathOperator{\Ql}{\mathrm{C}^*_{\mathrm{ql}}\!}
\DeclarePairedDelimiter{\abs}{\lvert}{\rvert}
\DeclarePairedDelimiter{\norm}{\lVert}{\rVert}
\DeclarePairedDelimiter{\inpro}{\langle}{\rangle}
\DeclarePairedDelimiter{\paren}{(}{)}

\DeclarePairedDelimiter{\set}{\lbrace}{\rbrace}
\newcommand{\setcolon}{\mathrel{:}}

\newcommand{\pmat}[1]{%
  \begin{pmatrix}
    #1
  \end{pmatrix}%
}
\newcommand{\Vmat}[1]{%
  \begin{Vmatrix}
    #1
  \end{Vmatrix}%
}
\newcommand{\smallmat}[1]{%
  \paren{\begin{smallmatrix}
    #1
  \end{smallmatrix}}%
}
\newtheorem{thmA}{Theorem}

\newtheorem{corA}[thmA]{Corollary}
\newtheorem{thm}{Theorem}
\newtheorem{prop}[thm]{Proposition}
\newtheorem{lem}[thm]{Lemma}
\theoremstyle{definition}
\newtheorem{defn}[thm]{Definition}
\theoremstyle{remark}
\newtheorem{rem}[thm]{Remark}

\title[Embeddings into the universal Roe algebra]{Embeddings of matrix algebras into the universal Roe algebra}
\author{Hiroto Nishikawa}
\address{RIMS, Kyoto University, \mbox{606-8502} Japan}
\email{hnishika@kurims.kyoto-u.ac.jp}
\thanks{The author was partially supported by JSPS KAKENHI Grant Number 26KJ1456}
\subjclass{46L05, 05C48}

\keywords{Uniform Roe algebras, quasi-local algebras, zig-zag product}
\date{\today}

\begin{document}
\begin{abstract}
    V. Manuilov introduced the universal Roe algebra, namely the uniform Roe algebra associated with the largest uniformly locally finite coarse structure $\Univ$ on $\IN$, together with several variants. 
    We study these algebras through embeddability properties of $\Cstar$-algebras.
    
    Among other results, we show that the universal Roe algebra admits an embedding from $\prod_n\IM_n$, answering a question by Farah.
    Thus, Ozawa's non-embeddability theorem for $\prod_n\IM_n$ in uniform Roe algebras of uniformly locally finite metric spaces does not extend to arbitrary uniformly locally finite coarse spaces.
    Nevertheless, embeddability properties distinguish the variants introduced by Manuilov and the quasi-local algebra of $\Univ$, apart from the multiplier algebra $M(\II)$.
    As a by-product, we also obtain an operator-norm analogue of Birkhoff's Problem 111: the norm-closed convex hull of permutation operators is exactly the set of doubly substochastic operators.
    
    Moreover, we show that every separable $\Cstar$-algebra embeds into the quasi-local algebra of $\Univ$. Although we do not know whether the analogous statement holds for the universal Roe algebra, we prove that every separable quasidiagonal $\Cstar$-algebra embeds into it, and hence into the reduced $\Cstar$-algebra of some \'{e}tale locally compact second countable groupoid.
\end{abstract}
\maketitle
\section{Introduction}
Roe algebras are important $\Cstar$-algebras which encode the large-scale geometry of metric spaces \cite{Roe88}.
For a uniformly locally finite metric space $X$, the uniform Roe algebra $\Roe X$ is the norm closure of finite-propagation operators, that is, operators on $\ell_2X$ with nonzero entries only within a bounded-width tube along the diagonal.
Membership in a uniform Roe algebra is often easy to verify by explicitly constructing finite-propagation approximations.
By contrast, proving non-membership is generally harder, since one must obtain a uniform lower bound on the distance from all finite-propagation operators.
On the other hand, it is easy to say that $T$ is not quasi-local: equivalently, there exist $\ve>0$ and subsets $A_n,B_n\subset X$ such that $\mathrm{dist}(A_n,B_n)>n$ and $\norm{\Iin_{A_n}T\Iin_{B_n}}\geq\ve$ for all $n$.
Since every uniform Roe operator is quasi-local, this also implies that $T\notin\Roe X$.
Therefore, the converse (i.e., whether the $\Cstar$-algebra of quasi-local operators coincides with the uniform Roe algebra) has attracted considerable attention \cite{Eng18, SZ20}.
This converse holds for spaces with property A, but fails in general.
The first example, where the uniform algebra and the quasi-local algebra do not coincide, is given in \cite{Oza25}.
Ozawa distinguished the two algebras by the embeddability of $\prod_n\IM_n$: he proved that $\prod_n\IM_n$ does not embed into the uniform Roe algebra of any uniformly locally finite metric space, whereas it embeds into the quasi-local algebra of a space containing expanders.

In this paper, we study similar problems for the universal ulf coarse structure $\Univ$. 
To be more precise, let us explain some notations.
Let $X$ be a countable set.

\begin{defn}
    For $\Sigma\subset X\times X$, denote its \emph{width} by $\Width\Sigma$
    \[\Width\Sigma\coloneqq\max\paren[\big]{\sup_{x}\abs{\set{y\setcolon(x,y)\in\Sigma}},\sup_y\abs{\set{x\setcolon(x,y)\in\Sigma}}}.\]
    The universal ulf coarse structure $\Univ$ is the set of $\Sigma\subset\IN\times\IN$ with bounded width.
\end{defn}

\begin{defn}
    \begin{itemize}
        \item A vector $\xi\in\ell_2X$ is said to be \emph{$d$-sparse} if its support $\supp(\xi)=\set{x\mid\xi_x\neq0}$ has the cardinality at most $d$.
        $\xi\in\ell_2X$ is \emph{$\ve$-$d$-sparse} if there exists a $d$-sparse vector $\xi'$ such that $\norm{\xi-\xi'}\leq\ve$.
        \item An operator $T\in\IB(\ell_2X)$ is said to be \emph{$d$-width} if its \emph{support} $\supp(T)=\set{(x,y)\mid T_{x,y}\neq0}$ has the width at most $d>0$.
        $T\in\IB(\ell_2X)$ is \emph{$\ve$-$d$-width} if there exists a $d$-width operator $T'$ such that $\norm{T-T'}\leq\ve$.
        \item An operator $T\in\IB(\ell_2X)$ is said to be \emph{$\ve$-$d$-quasi-local} if there exists $\Sigma\subset X\times X$ with $\Width\Sigma\leq d$ such that for every $A,B\subset X$ with $(A\times B)\cap\Sigma=\emptyset$ it satisfies $\norm{\Iin_AT\Iin_B}\leq\ve$.
        \item An operator $T\in\IB(\ell_2X)$ is said to be \emph{$d$-$\ell_1$bounded} if the operator norms on $\ell_1,\ell_\infty$ satisfy $\sum_x\abs{T_{x,y}},\sum_x\abs{T_{y,x}}\leq d$ for $y\in X$.
        \item An operator $T\in\IB(\ell_2X)$ is said to be \emph{$\ve$-$d$-inQ} if all columns and rows $T\delta_x,T^*\delta_x$ are $\ve$-$d$-sparse for $x\in X$.
        \item An operator $T\in\IB(\ell_2)$ is said to be \emph{ghost} if the matrix entries $T_{x,y}\to0\text{ as }x,y\to\infty$, that is, entries belong to $c_0(\IN\times \IN)$.
    \end{itemize}
\end{defn}

Then, we define the universal Roe algebra and its variants introduced in \cite{Man19}.

\begin{defn}
    \begin{itemize}
        \item The universal Roe algebra (called $\IB_f$ in \cite{Man19}) is denoted by
        \[\Roe\Univ\coloneqq\set{T\in\IB(\ell_2)\mid\forall\ve>0\exists d>0\ T\text{ is $\ve$-$d$-width}}.\]
        \item The quasi-local algebra $\Ql\Univ$ is defined as
        \[\Ql\Univ\coloneqq\set{T\in\IB(\ell_2)\mid\forall\ve>0\exists d>0\ T\text{ is $\ve$-$d$-quasi-local}}.\]
        \item Set $\IB_T\coloneqq\overline{\set{T\in\IB(\ell_2)\mid\exists d>0\ T\text{ is $d$-$\ell_1$bounded}}}$.
        \item $Q\coloneqq\set{T\in\IB(\ell_2)\mid\forall\ve>0\exists d>0\ T\text{ is $\ve$-$d$-inQ}}$ coincides with $\IB_\omega$ in \cite{Man19} by \cite{GQW24}.
        \item Since $\II\coloneqq\set{T\in\Roe\Univ\mid T\text{ is ghost}}$ is a nonunital $\Cstar$-algebra, we consider its multiplier algebra $M(\II)$.
    \end{itemize}
\end{defn}

Here is our main theorem.

\begin{thmA}
    $\prod_n\IM_n$ embeds into $\Roe\Univ$.
    
    Indeed, for every $\ve>0$, there exist $d>0$ and $C>0$, depending only on $\ve$, such that
    for any $n\in\IN$ there exists $N\in\IN$ satisfying the following:
    for a uniformly random multiplicity-one $*$-homomorphism $\vp\colon\IM_n\to\IM_N$ with probability at least $1-\exp\paren[\big]{-N/\exp(C n)}$,
    the operator $\vp(a)$ is $\ve$-$d$-width for any contraction $a\in\IM_n$.
\end{thmA}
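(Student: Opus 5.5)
The plan is to realise the uniform random model concretely and reduce everything to spectral gaps of random regular graphs, which is where the probability bound $1-\exp(-N/\exp(Cn))$ comes from. I take $N=nm$ with $m$ to be chosen (a multiple of $\exp(Cn)$). A uniformly random multiplicity-one $*$-homomorphism $\vp\colon\IM_n\to\IM_N$ is then, up to conjugation by a uniformly random permutation of $\set{1,\dots,N}$, the map $a\mapsto a\otimes e_m$, where $e_m=\frac1m J_m$ is the projection onto the constants in $\ell_2\set{1,\dots,m}$. Equivalently, let $B_1,\dots,B_n$ be a uniformly random partition of $\set{1,\dots,N}$ into blocks of size $m$, and let $\xi_i=m^{-1/2}\Iin_{B_i}$. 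Then
\[\vp(a)=\sum_{i,j}a_{ij}\,\xi_i\xi_j^*.\]
Conjugation by a permutation preserves width, so only the combinatorial structure of the blocks matters.

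The core step is to approximate $\vp(a)$ by a bounded-width operator whose width does not depend on $n$, using expanders. Fix a $d_0$-regular graph $G$ on $\set{1,\dots,N}$ and write $A_G$ for its normalised adjacency operator. Whenever $G$ is an expander with second eigenvalue at most $\lambda<1$, the power $A_G^k$ is $\lambda^k$-close to the rank-one projection onto constants, and $A_G^k$ has width at most $d_0^k$. To capture the $\IM_n$-structure, I use a zig-zag type construction: an expander on the $n$ blocks (with $n$-dependent degree, which is harmless) combined with random $d_0$-regular graphs inside and between the blocks. The target is a word $W(a)$ in finitely many bounded-width operators with the following two properties. First, $W(a)$ has width $d=d_0^{O(\log(1/\ve))}$ uniformly over contractions $a$. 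Second, $\norm{W(a)-\vp(a)}\le\ve$, for instance with
\[W(a)=\sum_{i,j}a_{ij}\,\Iin_{B_i}\,A_{G}^k\,\Iin_{B_j}\]
suitably normalised, where $G$ is random regular on $\set{1,\dots,N}$. The point to verify is that $\Iin_{B_i}A_G^k\Iin_{B_j}\approx\xi_i\xi_j^*\cdot(\text{const})$ simultaneously for all $i,j$ in operator norm after summation against a contraction $a$. This amounts to the bound
\[\Bigl\lVert A_G^k-\sum_i\xi_i\xi_i^*\,A_G^k\sum_j\xi_j\xi_j^*\Bigr\rVert\le\ve,\]
that is, the random graph must look like a blow-up of a complete-graph expander relative to the partition. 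Linearity in $a$, together with an operator-space estimate coming from $\IM_n$ being $n$-dimensional (using a net of contractions), makes the bound uniform over all contractions.

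The probabilistic input is a concentration inequality for spectral gaps of random regular graphs adapted to a fixed partition. I will invoke Friedman/Bordenave-type bounds, or more simply a Chernoff bound on edge counts between every pair of sets, in the style of the expander mixing lemma. These show that a random $d_0$-regular graph, whose vertex set carries the random partition, satisfies the mixing estimate above except with probability $\exp(-cm)$ per pair of blocks. A union bound over the $n^2$ pairs, together with a net of size $\exp(Cn^2)$ on the contractions of $\IM_n$ if needed, then gives failure probability at most $\exp(-N/\exp(Cn))$ once $m\ge\exp(Cn)$. The main obstacle is this uniform mixing step: the width must stay independent of $n$ while the operator-norm error, summed over $n^2$ matrix units, stays below $\ve$. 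Naive entrywise bounds lose a factor of $n$. To avoid this, I will first try to bound the error as a single operator $A_G^k-P$, where $P$ is the block-constant projection, and then use $\norm{\sum a_{ij}\Iin_{B_i}X\Iin_{B_j}}\le\norm{a}\,\norm{X}$ for block-diagonal-compatible $X$. This Schur-multiplier-type estimate is exactly what the tensor structure $a\otimes e_m$ permits.

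Finally, the embedding of $\prod_n\IM_n$ follows by choosing, for each $n$, a good $\vp_n$ on a disjoint block of $\IN$. The widths $d(\ve)$ are uniform in $n$, so the direct sum $\bigoplus_n\vp_n$ sends every element of $\prod_n\IM_n$ into $\Roe\Univ$.
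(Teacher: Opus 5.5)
Your first step fails, and the later steps cannot repair it. A uniformly random multiplicity-one $*$-homomorphism $\vp\colon\IM_n\to\IM_N$ is $a\mapsto VaV^*$ with $V\in\Isom(N\times n)$ Haar distributed. That is, it is a conjugate of a fixed embedding by a Haar unitary in $U(N)$, not by a random permutation. Your model $a\mapsto\Pi(a\otimes e_m)\Pi^*$ is a measure-zero subfamily.

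Worse, the conclusion is false in your model, and since permutations preserve width your randomness plays no role. Take the Fourier unitary $a=U_n$, whose entries all have modulus $n^{-1/2}$, and the unit vector $\eta=\delta_1\otimes m^{-1/2}\Iin$, which is supported on $m$ points. Any $d$-width $S$ sends $\eta$ to a vector supported on at most $dm$ points. But $(U_n\otimes e_m)\eta$ has all $N=nm$ entries of modulus $N^{-1/2}$. Hence $\norm{(U_n\otimes e_m-S)\eta}^2\geq1-d/n$, which forces $d\geq(1-\ve^2)n$.

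Your mixing step shows the same obstruction quantitatively. If $\inpro{\xi_i,A_G^k\xi_j}\approx c$ for all $i,j$, then $PA_G^kP\approx c\,\zeta\zeta^*$ with $\zeta=\sum_i\xi_i$ and $\norm{\zeta}^2=n$. Since $\norm{A_G^k}\le1$, this gives $c\lesssim1/n$. Then $W(a)\approx c\,\vp(a)$, and recovering $\vp(a)$ requires dividing by $c$, which multiplies the error by at least $n$. Separately, the probability you bound is over the auxiliary graph $G$, not over $\vp$, so it does not address the stated estimate either.

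The missing idea is to use the Haar randomness of $V$ itself. The paper first reduces to a real Gaussian $V$ with i.i.d.\ entries of variance $\frac1N$. It then returns to Haar isometries via polar decomposition and a $2\times2$ trick, and to all contractions via a net of projections and the layer-cake decomposition. The rows of $\sqrt N V$ are viewed as points $F(p)\in\IR^n$, and $[N]$ is partitioned according to a partition of $\IR^n$ into $\exp(O(n))$ cells. Chernoff bounds on the cell counts, each of mass at least $\exp(-O(n))$, give the $\exp(-N\exp(-Cn))$ failure probability.

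The Ornstein--Uhlenbeck operator $T_t$ is $e^{-2t}$-close to $\Iin\otimes\Iin^*+e^{-t}V_\infty V_\infty^*$. It is discretised to an integer-weighted symmetric matrix on the cells. This matrix is realised by a zig-zag product: expanders inside each cell together with an involution $\sigma$ matching $X_{i,j}$ with $X_{j,i}$. The result is a $d^2$-width $T$ with $\norm{T-(\Iin\otimes\Iin^*+e^{-t}VV^*)}\leq e^{-t}O(\ve)$. Therefore $VV^*\approx e^t(T-\Iin\otimes\Iin^*)$ has bounded width, and the width does not depend on $n$.

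The underlying reason this works is that the range of a Haar isometry consists of delocalised, Gaussian-like vectors. A bounded-degree noise operator sees these vectors as an isolated approximate eigenspace. By contrast, the range of $a\otimes e_m$ contains vectors supported on $N/n$ points, which $\vp(a)$ spreads over all of $[N]$.
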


The amplified matrix size can be taken to be $N=\exp(O_\ve(n))$.

\begin{thmA}
    $\Roe\Univ=\IB_T\subsetneq\Ql\Univ\subsetneq Q$ and $M(\II)\subset Q$ hold.

    Moreover, we give an operator-norm analogue of Birkhoff's Problem 111: the norm-closed convex hull of permutation operators is exactly the set of doubly substochastic operators on $\ell_2$.
\end{thmA}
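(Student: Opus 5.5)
The statement splits into four claims: $\Roe\Univ=\IB_T$, the strict inclusions $\IB_T\subsetneq\Ql\Univ\subsetneq Q$, the inclusion $M(\II)\subset Q$, and the Birkhoff-type result. I will prove the Birkhoff statement first, because it is the engine behind $\IB_T\subset\Roe\Univ$.

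\textbf{Step 1: the Birkhoff analogue.} Permutation operators are doubly substochastic. So is every norm limit of their convex combinations, since the conditions entries $\ge0$ and row and column sums $\le1$ are stable under norm limits, which control entries. For the converse, let $S$ be doubly substochastic on $\ell_2\IN$ and let $P_k$ be the projection onto $\ell_2\{1,\dots,k\}$. I would enlarge the corner $P_kSP_k$ to a doubly stochastic $2k\times2k$ matrix, for instance by the standard block $\smallmat{A & I-D_r\\ I-D_c & A^{t}}$ construction. Birkhoff's theorem then writes it as a convex combination of permutations.

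The difficulty is that this only gives approximation in the strong or weak topology, not in norm. For norm approximation I would split $S=S_{\le\delta}+S_{>\delta}$ according to entry size. The part with entries $>\delta$ has width at most $1/\delta$ and is handled combinatorially, by König/Hall on a bounded-degree bipartite graph. The part with small entries I would treat by a randomized rounding: average over random permutations matched to the row and column mass profiles, and check that the error has small $\ell_2$ norm via Schur's test with $\ell_1$ and $\ell_\infty$ bounds.

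I expect this norm-approximation step to be the main obstacle. The tail of small entries can have huge width, yet one needs $\ve$-closeness in operator norm. The key tool I would use is that the averaged permutation error is controlled by $\sup_x\sum_y\abs{T_{xy}}$ and $\sup_y\sum_x\abs{T_{xy}}$, so the task reduces to making those row and column sums small.

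\textbf{Step 2: $\Roe\Univ=\IB_T$.} For $\subset$: a $d$-width operator of norm $\le M$ is $dM$-$\ell_1$bounded, and both algebras are closed. For $\supset$: split a $d$-$\ell_1$bounded $T$ into real and imaginary, positive and negative parts, each entrywise nonnegative. By Step 1, each is $d$ times a norm limit of convex combinations of permutations. A finite convex combination of $m$ permutations has width $\le m$, so each part lies in $\Roe\Univ$.

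\textbf{Step 3: inclusions and their strictness.} Next, $\Roe\Univ\subset\Ql\Univ$ is immediate: if $\norm{T-T'}\le\ve$ with $T'$ $d$-width, take $\Sigma=\supp T'$. For $\Ql\Univ\subset Q$: given $\Sigma$ of width $d$, set $A=\{x\}^c\cap\{y:(x,y)\notin\Sigma\}$ and $B=\{x\}$. This shows every column is $\ve$-close to a vector supported on at most $d$ points, and likewise for rows.

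For $M(\II)\subset Q$: if some column of $T\in M(\II)$ is not $\ve$-$d$-sparse for any $d$, I would build, by a diagonal argument, a ghost operator $G\in\II$ of width one (a partial permutation with decaying entries). I would choose it so that $TG$ is not $\ve/2$-$d$-inQ for any $d$, hence not in $\Roe\Univ$, contradicting $TG\in\II$.

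For strictness, I would use embeddability results established later in the paper, or direct examples. The first strictness, $\Roe\Univ\subsetneq\Ql\Univ$, needs an operator that is quasi-local but not approximable by bounded width; here I would follow Ozawa's expander-based construction, adapted to $\Univ$. The second, $\Ql\Univ\subsetneq Q$, needs an inQ operator that is not quasi-local: take a block-diagonal sum of $n\times n$ matrices whose columns are $1/n$-close to sparse, but for which some pair $A,B$ avoiding every width-$d$ set gives a large corner $\Iin_AT\Iin_B$.
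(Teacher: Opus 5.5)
\textbf{Verdict: genuine gaps.} Your proposal fails at the step everything else depends on. The easy parts are fine: the inclusions $\Roe\Univ\subset\IB_T$ and $\Roe\Univ\subset\Ql\Univ\subset Q$ are correct, and reducing $\IB_T\subset\Roe\Univ$ to a norm-Birkhoff statement is logically sound. The trouble is the tool you name for the norm approximation.

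\textbf{The norm approximation cannot go through Schur's test.} Schur's test bounds $\norm{E}$ by the $\ell_1$ row and column sums of $\abs{E}$. These sums are never small when $E=T-R$ with $R$ of bounded width and $T$ spread out. For $T=\frac1N\Iin\otimes\Iin^*$ on $\ell_2^N$ and any $d$-width $R$, every row of $T-R$ has $\ell_1$-mass at least $1-d/N$. Yet a normalized Ramanujan graph gives $\norm{T-R}=O(d^{-1/2})$. So the approximation is pure $\ell_2$-cancellation (expander behaviour), and for general doubly substochastic matrices you need a \emph{dimension-free} bound. Random rounding analysed by matrix Chernoff or Bernstein loses a factor $\sqrt{\log n}$, which is useless on $\ell_2(\IN)$.

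The missing ingredient is the Kyng--Luh--Song matrix discrepancy theorem. The paper rounds each $tT_{ij}$ randomly to $\lfloor tT_{ij}\rfloor$ or $\lfloor tT_{ij}\rfloor+1$. It then applies KLS to $\sum_{i,j}\xi_{ij}u_{ij}\otimes u_{ij}^*$ with $u_{ij}=\delta_i\oplus\delta_j$. Schur's test is used only on the \emph{variance} matrix, giving $\sigma^2\le 4t$. The off-diagonal block of the resulting $2\times2$ block matrix gives $\norm{T-S}\le 8t^{-1/2}$. The diagonal blocks give row and column sums of $S$ at most $1+8t^{-1/2}$. Since $S$ has entries in $t^{-1}\IN$, its width is at most $t+8\sqrt t$. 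This proves $\IB_T\subset\Roe\Univ$ directly, without passing through Birkhoff.

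\textbf{The infinite Birkhoff statement also needs a norm-controlled completion.} You must complete $tS$ to an $m$-regular matrix on $\IN$ itself with norm control; note that $0$ must be a norm limit of averages of permutations. Your finite block completion gives no such control. The paper adds $B=B_1+B_2^*$ with $B_1^*\Iin=B_2^*\Iin=\Iin$, spreading the deficiencies over fresh indices. Schur's test then gives $\norm{B}=O(\sqrt t)$, and one edge-colours the resulting $m$-regular bipartite multigraph.

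\textbf{The argument for $M(\II)\subset Q$ fails for a similar reason.} First, a quantifier slip: $T\notin Q$ means that for some $\ve$ and every $d$, \emph{some} column or row is not $\ve$-$d$-sparse. A single column is always $\ve$-$d$-sparse for some $d$.

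More seriously, a width-one ghost $G$ only permutes entries and multiplies them by factors tending to $0$. Using $T\delta_x\to0$ weakly, all columns and rows of $GT$ and $TG$ then tend to $0$ in norm for every bounded $T$. Hence $GT,TG\in Q$ automatically, and the same holds for any exactly bounded-width ghost by edge colouring.

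What you need is an element of $\II$ that keeps a fixed fraction of the bad columns' mass while spreading it out. The paper splits each bad column into dyadic level sets $F_k$ with $\abs{F_k}\ge n$ carrying mass at least $\ve^2/2$. It multiplies by $S=\bigoplus P_{F_k}$, where $P_F=\abs{F}^{-1}\Iin_F\otimes\Iin_F^*$; this $S$ is ghost and lies in $\Roe\Univ$ only thanks to expanders. The resulting columns of $ST$ have norm at least $\ve/(2\sqrt2)$ but all entries at most $\ve^2/(2n)$, contradicting $ST\in Q$.

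\textbf{The strict inclusions are not actually argued.} For $\Roe\Univ\subsetneq\Ql\Univ$, Ozawa's expander argument gives only membership in $\Ql\Univ$. His non-membership argument is the non-embeddability of $\prod_n\IM_n$, which fails for $\Univ$ by Theorem A. The paper instead takes Haar random projections of rank $r_n$ with $\log\abs{X_n}\ll r_n\ll\abs{X_n}$. It compares the covering number $e^{O_\ve(d\abs{X_n}\log\abs{X_n})}$ of $\ve$-$d$-width contractions with the Haar measure $\le(16\ve)^{r_n(\abs{X_n}-r_n)}$ of a small ball in $\Grass(\abs{X_n},r_n)$.

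For $\Ql\Univ\subsetneq Q$ you give no concrete operator. Either of the following would do:
\begin{itemize}
\item the paper's $\bigoplus_n U_n\otimes p_n$, a flat unitary such as the DFT tensored with the averaging projection on $m_n=o(n)$ points;
\item the embeddability route, since $\IB(\ell_2)$ embeds into $Q$ but not into $\Ql\Univ$.
\end{itemize}
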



\begin{thmA}
    There exists a bounded degree graph $X$ such that every separable $\Cstar$-algebra embeds into $\Ql X$ but $\IB(\ell_2)$ does not. 
    
    Every separable quasidiagonal $\Cstar$-algebra embeds into $\Roe\Univ$.
    Hence, for any such algebra $A$ there exists \'{e}tale locally compact second countable groupoid $G$ such that $A$ embeds into $\mathrm{C}^*_{\text{r}}G$.
\end{thmA}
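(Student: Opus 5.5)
Theorem~C has three parts, and I would prove them in the order given.

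For the first part I would take $X$ to be a disjoint union (a coarse disjoint union, or a union joined by long paths so that it stays a connected bounded degree graph) of expanders $X_k$ of growing size. The goal is to embed every separable $\Cstar$-algebra into $\Ql X$. My plan is to first embed the Cuntz algebra $\mathcal{O}_2$ into $\Ql X$; every separable exact algebra then embeds via Kirchberg. Full generality is the problem, since non-exact algebras do not embed in $\mathcal{O}_2$. So instead I would aim for a universal target. By Ozawa's argument, $\prod_k \IM_{|X_k|}$ embeds into $\Ql X$ through the projections onto constants together with unitaries that mix along the expander. Concretely, I would look for block-diagonal maps $\IM_{n_k}\to \IB(\ell_2 X_k)$ that are quasi-local uniformly in $k$, built as $a\mapsto \vp_k(a)$ with $\vp_k$ spreading each matrix unit over many vertices.

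Every separable $\Cstar$-algebra $A$ embeds into some $\prod_k \IM_{n_k}/\bigoplus$, but that is a quotient, not $\prod_k \IM_{n_k}$ itself. I would therefore use a different universal object: $\IB(\ell_2)$ contains every separable algebra, yet the statement says $\IB(\ell_2)$ does not embed. So I need a separable universal algebra. The natural choice is the cone or the full group $\Cstar$-algebra $\Cstar(\mathbb{F}_\infty)$, since every separable algebra is a quotient of it, but quotients are again the issue. I would use instead the fact that every separable $A$ embeds in a separable subalgebra of $\prod_k \IM_{n_k}$ modulo nothing. This fails for non-RFD algebras, so I need something infinite-dimensional on each block.

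The fix I would try is to let each block of $X$ contain copies of expanders whose quasi-local algebras carry Ozawa's non-Roe ghost-type projections, giving $\Ql X\supset \prod_k \IB(\ell_2 F_k)$-like pieces. Then I would embed the $\ell_\infty$-tower algebra $\prod_k \IM_{n_k}$ together with an extra shift-like isometry realized quasi-locally across blocks, generating $\mathcal{O}_2\otimes$ stuff. This step is the main obstacle, and I am least sure of it.

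The non-embeddability of $\IB(\ell_2)$ I would prove by a cardinality and separability-of-diagonal argument. A copy of $\ell_\infty$ with an uncountable family of orthogonal projections would force quasi-locality to fail. Alternatively, since $\Ql X$ of a bounded geometry space has a faithful family of states factoring through $\ell_\infty(X)$-compressions, I would show that any embedded copy of $\IB(\ell_2)/\mathbb{K}$ leads to a contradiction.

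For the second part, let $A$ be quasidiagonal and separable. Voiculescu gives a unital-ish embedding $A\hookrightarrow \prod_n \IM_{k_n}/\bigoplus_n \IM_{k_n}$ with a ucp lift $\psi=(\psi_n)$ that is asymptotically multiplicative and asymptotically isometric. I would take a block-diagonal direct sum of the maps $\vp_n\circ\psi_n$ with $\vp_n$ from Theorem~A, repeating each $n$ infinitely often with tolerances $\ve_n\to 0$. Theorem~A ensures the image lies in $\Roe\Univ$, since each block is $\ve$-$d$-width uniformly. To kill the quotient I would use the standard trick $a\mapsto \bigoplus_{m}\bigoplus_{n\ge m}\vp_n\psi_n(a)$: its image modulo $\bigoplus$-type error is a $*$-homomorphism, and because $\prod_n\IM_n$ already embeds, $\prod/\bigoplus$ embeds as well via this diagonal repetition. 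One still needs to check that $\Roe\Univ$ absorbs the errors, which go to zero in norm on each tail.

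For the groupoid statement, $\Roe\Univ$ is the reduced algebra of the coarse groupoid of $(\IN,\Univ)$. That groupoid is not second countable, so I would pass to a separable subalgebra generated by countably many partial translations. The groupoid $G$ associated to the corresponding countable inverse semigroup, acting on the spectrum of the separable $\ell_\infty$-subalgebra they generate, is étale and second countable, and $A$ embeds into $\mathrm{C}^*_{\text{r}}G$.
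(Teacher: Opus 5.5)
There are genuine gaps in the first two assertions.

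\textbf{Embedding every separable algebra into $\Ql X$.} You give no construction, and the route you sketch cannot succeed. If $X$ is a coarse disjoint union of finite expanders $X_k$, then averaging $\Iin_{A_S}T\Iin_{B_S}$ over random sets $S$ of blocks shows $\Ql X\subset\prod_k\IB(\ell_2X_k)+\IK$. Every simple unital infinite-dimensional subalgebra of $\Ql X$ would then be MF, hence stably finite, which excludes $\mathcal{O}_2$. In the same vein, it is false that every separable $\Cstar$-algebra embeds in $\prod_k\IM_{n_k}/\bigoplus_k\IM_{n_k}$.

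The ``shift-like operator across blocks'' that you leave as the main obstacle is exactly what the paper builds in. It takes $\abs{X_n}=3^n$ with a $d$-regular expander on each $X_n\sqcup X_{n+1}$. It reduces, via $A\otimes\IK$ and Olsen--Zame, to two self-adjoint generators $a,b$. It then uses Voiculescu's Krylov-type construction ($\cH_0=\IC\xi$, and $\cH_{n+1}$ the component of $a\cH_n+b\cH_n$ orthogonal to $\cH_0\oplus\dots\oplus\cH_n$). This makes $a,b$ block-tridiagonal with $\dim\cH_n\le2^n$ on a faithful subrepresentation. Now take Haar random isometries $V_n\colon\cH_n\to\ell_2(X_n)$. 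A block-tridiagonal operator is the sum of two operators that are block-diagonal for the pairings $\cH_n\oplus\cH_{n+1}$ ($n$ even, resp.\ odd). Each is quasi-local by Ozawa's random-isometry estimate on the expander $X_n\sqcup X_{n+1}$, since $2^n+2^{n+1}=o(3^n)$.

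\textbf{Non-embeddability of $\IB(\ell_2)$.} Your argument misses the point. No cardinality argument can work, since $\IB(\ell_2)$ embeds in $\IB(\ell_2)$ and even in $Q$. The fact that $\IB/\IK$ has no nonzero representation on $\ell_2$ only shows that $\pi\colon\IB(\cH)\to\Ql\Univ$ is normal, $\pi(x)=\sum_iV_ixV_i^*$. The real content is that such an amplification is not quasi-local. The paper gets a uniform $d$ on a corner by Baire category applied to the WOT-closed sets $\mathrm{QL}(\ve,d)$. Then for $\theta=m^{-1/2}\sum_{n\le m}\theta_n$, the vector $V\theta$ has all coordinates $\le2/\sqrt m$. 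This contradicts the sparseness of $V\theta=\pi(\theta\otimes e^*)Ve$ forced by quasi-locality.

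\textbf{Quasidiagonal algebras.} The map $a\mapsto\bigoplus_m\bigoplus_{n\ge m}\vp_n\psi_n(a)$ is not a $*$-homomorphism. The multiplicativity defect of its $m$-th summand tends to $0$, but the defect of the whole sum is $\sup_n\norm{\psi_n(ab)-\psi_n(a)\psi_n(b)}\neq0$. Since you want an embedding into $\Roe\Univ$ itself, there is no quotient in which to ``absorb'' it.

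Your claim that $\prod_n\IM_n/\bigoplus_n\IM_n$ embeds is also false. Take an almost disjoint family $\set{S_\alpha}$ of continuum many infinite subsets of $\IN$. The central projections $\Iin_{S_\alpha}$ become continuum many pairwise orthogonal nonzero projections in the quotient, which is impossible inside $\IB(\ell_2)$.

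The paper avoids approximate maps altogether. A separable quasidiagonal $A$ has a faithful representation with a quasicentral increasing sequence of finite-rank projections. Passing to a sufficiently rapidly increasing subsequence gives $\pi(A)\subset\IK+\prod_i\IM_{n_i}$ on $\bigoplus_i\ell_2^{n_i}$. The embedding of Theorem~A is $\Ad V$ for an isometry $V$, so it is an injective $*$-homomorphism on all of $\IB(\bigoplus_i\ell_2^{n_i})$. It maps $\prod_i\IM_{n_i}$ into $\Roe\Univ$ and $\IK$ into $\IK\subset\Roe\Univ$, so $\Ad V\circ\pi$ is the desired embedding.

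\textbf{Groupoid step.} This is essentially the paper's argument: countably many partial translations, a separable invariant diagonal, and Renault's theorem or Roe's translation groupoid. You do need to add the fixed-point projections of words in the partial translations to the diagonal, so that it is Cartan.
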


\subsection*{Acknowledgements}
The author thanks his supervisor, Narutaka Ozawa, for his support and helpful comments, especially for bringing Farah's question to the author's attention and for his comments on Birkhoff's Problem 111.
This research was partially supported by JSPS KAKENHI Grant Number 26KJ1456.

\subsection*{AI disclosure}
The author used ChatGPT (OpenAI) during this study for proving Proposition~\ref{AI1} ($\Roe\Univ=\IB_T$, Birkhoff's problem 111), and for language editing.
All mathematical arguments were independently checked by the author, who takes full responsibility for the contents of the paper.

\subsection*{Outline of the proofs}
The proof of Theorem A is based on a degree-reduction using the zig-zag product and a discretization of the Ornstein--Uhlenbeck semigroup, viewed as a Gaussian analogue of linearity testing;
we explain this in detail in the next section.

For Theorem B, we use a matrix discrepancy theorem of Kyng--Luh--Song, random projection and a tensor-product inflation in order to distinguish the variants of $\Roe\Univ$.

Finally, Theorem C combines Theorems A and B with a block-tridiagonalization argument going back to Voiculescu.

\subsection*{Notations}
Denote the set $\set{1,\dots,N}$ by $[N]$.
$\Iin$ stands for the constant function $1$ and $\Iin_E$ stands for the indicator function on $E$.
A set $F$ in a metric space $X$ is said to be \emph{$\ve$-dense} if for every $x\in X$ there is $y\in F$ such that $d(x,y)\leq\ve$.
$F\subset X$ is said to be \emph{$\ve$-sparse} if $d(x,y)\geq\ve$ whenever $x,y\in F$ are distinct.

\clearpage
\section{Proof of Theorem A}
The proof of Theorem~A uses a degree reduction via the \emph{zig-zag product} and \emph{linearity testing}, as in \cite{Din07}.
The author thanks Nobutaka Shimizu for his clear lectures on the PCP theorem, through which the author learned these methods.

We first explain the main ideas of the proof.
First, we may assume that the contraction $a\in\IM_n$ is a projection by the layer-cake decomposition. 
It therefore remains to show that the Haar random projection with small rank is $\ve$-$d$-width with high probability.
Note that the projection $\frac1N\Iin\otimes\Iin^*$ onto the constant vectors $\IC\Iin\subset\ell_2^N$ is $\ve$-$d$-width for $\ve=O(d^{-\frac12})$ thanks to expanders.
Our proof uses the Gaussian-weighted version of this argument, explained below.

We work over the real field; the complex case will be recovered by a $2\times2$ trick. 
Let $V$ be the $N\times n$ matrix whose entries are i.i.d. Gaussian with variance $\frac1N$. 
$VV^*$ is close to the Haar random projection in the operator norm, so we show that $VV^*$ is $\ve$-$d$-width.

Viewing $V$ as the map $F\colon[N]\to\IR^n$ mapping each point to the row of $V$, we divide $X\coloneqq[N]$ into $\exp(O(n))$ parts $\{X_i\}_{i\in I}$ corresponding to a partition $\IR^n=\bigsqcup_{i\in I} U_i$. 
Since $F$ is almost constant on almost all $X_i$, we choose an expander on each $X_i$. 
Define the weighted graph on $I$ coming from the linearity testing on $\IR^n$ (so-called \emph{Ornstein--Uhlenbeck semigroup}).
By taking the zig-zag product with $I$ and $\{X_i\}$, we can reduce the degree of the linearity testing, and $VV^*$ is approximated by some spectral projection of the Laplacian of the zig-zag product.
In this way, we compare the Gaussian space $\IR^n$ with $[N]$:
\[\begin{tikzcd}
    \IR^n\ar[r,"V_\infty"]\ar[d,phantom,"=" sloped] 
    &L^2(\IR^n,\gamma)\ar[r,"P_\infty"] \ar[d,phantom,"\approx" sloped]
    &\ell_2(I,\mu_\infty)\ar[d,"\iota"]\\
    \IR^n\ar[r,"V"] &\ell_2[N]\ar[r,"P"] &\ell_2(I,\mu)
\end{tikzcd}\]

\subsection{Reduction to real, projection, Gaussian}

\begin{prop}\label{Prop}
    Let $V\in\IM(N\times n,\IR)$ be the random matrix whose entries are i.i.d. Gaussian with variance $\frac1N$.
    For every $\ve>0$, there exist $d>0$ and $C>0$, depending only on $\ve$, such that for every $n$ there exists $N_0$ satisfying the following:
    for every $N\geq N_0$, $VV^*$ is $\ve$-$d$-width with probability at least $1-\exp\paren[\big]{-N\exp(-Cn)}$.
\end{prop}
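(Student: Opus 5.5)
We follow the strategy sketched above. Write $F\colon[N]\to\IR^n$, $F(x)=\sqrt N\,V^*\delta_x$; then the rows are i.i.d.\ standard Gaussian vectors and $(VV^*)_{x,y}=\frac1N\inpro{F(x),F(y)}$. The target is a $d$-width operator $T$ with $\norm{VV^*-T}\le\ve$, with $d$ independent of $n$. We use the continuous model: the Ornstein--Uhlenbeck operator $P_t$ on $L^2(\IR^n,\gamma)$ acts on linear functions by $e^{-t}$ and kills constants, and its higher Hermite eigenspaces are damped by $e^{-kt}$. The spectral projection of $P_t$ onto eigenvalue $e^{-t}$ is therefore $V_\infty V_\infty^*$, where $V_\infty\colon\IR^n\to L^2(\gamma)$ sends $v$ to $\inpro{v,\cdot}$. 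Since $P_t$ has only finitely many eigenvalues above any threshold, a polynomial $p$ of degree $O_\ve(1)$ in $P_t$ approximates this projection to within $\ve$, with $t$ chosen depending on $\ve$.

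First, discretize. Fix $\delta=\delta(\ve)$ and partition $\IR^n$ into cells $U_i$, $i\in I$, with $|I|=\exp(O_\ve(n))$. The cells are Voronoi cells of a $\delta\sqrt n$-net on a large ball, plus one outer cell of tiny Gaussian mass. Set $X_i=F^{-1}(U_i)$. The transition kernel of $P_t$ induces a weighted Markov chain on $I$ with stationary measure $\mu_\infty(i)=\gamma(U_i)$. By Chernoff bounds and a union bound over $|I|$ cells, with probability $1-\exp(-N\exp(-Cn))$ we have $|X_i|/N\approx\mu_\infty(i)$ for all $i$, once $N\ge N_0(n)$. This is exactly where the probability bound in Proposition~\ref{Prop} comes from. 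Concentration on each cell is not quite enough, because within $U_i$ the vectors $F(x)$ vary by $\delta\sqrt n$ in norm. So we refine by writing $VV^*$ as an average of rank-one contributions $\frac1N\sum_{k}\xi_k\xi_k^*$, where $\xi_k$ is the $k$-th column. The error from replacing $F(x)$ by the cell centre must then be controlled in operator norm, and this is the computation $\norm{P-P_\infty}$ indicated by the diagram. I would handle it through the low-degree Hermite structure, using only that linear functions are $\delta$-close in $L^2$ to their cell averages.

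Second, reduce the degree. The chain on $I$ has unbounded degree, with each cell coupled to exponentially many others. On each $X_i$ we place a $D$-regular expander, with $D=D(\ve)$. The operator $p(P_t)$, lifted to $\ell_2[N]$ by making it constant on blocks, equals $\iota\, p(P_\infty)\,\iota^*$ composed with the block-averaging projections. Each block-averaging projection is approximated by $O(\log(1/\ve))$ steps of the expander walk, up to error $O(D^{-1/2})$ per step. The inter-cell transitions still have large width. Following the zig-zag idea, we write the chain on $I$ as a sum of $O_\ve(1)$ near-permutation-like matchings between blocks. This uses the fact that the OU kernel is a Gaussian average over rotations $x\mapsto e^{-t}x+\sqrt{1-e^{-2t}}\,g$, and we sample finitely many $g$'s, or rather finitely many measure-preserving bijections of $[N]$ compatible with the block sizes. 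A zig-zag step then has bounded width $D^2\cdot O_\ve(1)$, and its square approximates (expander)$\cdot$(bijection)$\cdot$(expander). Composing boundedly many such steps gives $T$ of width $d(\ve)$.

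Third, combine the approximations: $VV^*\approx$ (lift of) $p(P_\infty)\approx$ (lift of) $p(P)\approx p(\text{zig-zag operator})$. Each approximation costs $\ve/3$, and the degree of $p$ and the number of walk steps depend only on $\ve$. The main obstacle is the degree-reduction step: one must realize the Gaussian noise operator on $\ell_2[N]$ by boundedly many bijections with operator-norm error $o_\ve(1)$, uniformly in $n$. Sampling the noise at only finitely many points $g$ does not approximate the OU operator in operator norm. Instead, I would exploit that we only need agreement on the low-degree Hermite space: choose bijections that match random Gaussian couplings, and control the error on the complement by the contraction $e^{-2t}$ together with a spectral gap argument, as in linearity testing.
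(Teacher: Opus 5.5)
Your outline has the right ingredients: cells pulled back by $F$, Chernoff bounds with a union bound, the gap between $e^{-t}$ and $e^{-2t}$ in the Ornstein--Uhlenbeck semigroup, and expanders on the cells. But the step you flag as the main obstacle is never carried out, and the target you set for it is the wrong one. You try to realize the Gaussian noise operator on $\ell_2[N]$, or its low-degree part, by $O_\ve(1)$ bijections. The missing idea is that only the block level matters, and a single permutation realizes it exactly. Since $\norm{T_t-(\Iin\otimes\Iin^*+e^{-t}V_\infty V_\infty^*)}\le e^{-2t}$ and $V$ is nearly block-constant, the compression $T^I_t=\iota P_\infty T_tP_\infty^*\iota^*$ (a nonnegative symmetric $\abs{I}\times\abs{I}$ matrix) already satisfies $\norm{T^I_t-(\Iin\otimes\Iin^*+e^{-t}VV^*)}\le e^{-t}(\ve+O(\sqrt\delta))+O(e^{-2t})$. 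Round it to $T'_t$ with entries $n_{i,j}/N$, where $n_{i,j}=n_{j,i}\in\IN$ and $\sum_jn_{i,j}=\abs{X_i}$; by the Schur test this costs $3\delta+N^{-1}\exp(O(n))$. Then one involution $\sigma$ of $[N]$, mapping a set $X_{i,j}\subset X_i$ of size $n_{i,j}$ onto $X_{j,i}$, satisfies $P\sigma P=T'_t$ exactly. With $E=\bigoplus_iT_{G_i}$ we have $PE=EP=P$ and $\norm{(1-P)E}\le\max_ir(G_i)$. Hence $T=E\sigma E$ has width $D^2$ and $\norm{T-T'_t}=O(D^{-1/2})$. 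Choosing $e^{-t}\lesssim\ve$, $D^{-1/2}\lesssim\ve e^{-t}$, $\delta\lesssim e^{-t}\ve\lesssim\ve^2$ and $N$ large gives $\norm{VV^*-e^t(T-\Iin\otimes\Iin^*)}=O(\ve)$; the term $e^t\Iin\otimes\Iin^*$ is handled by one more expander. The exponential block-level degree that worries you is harmless, because $\sigma$ has width $1$ however many blocks it connects. No splitting into matchings, no sampling of $g$, and no higher-degree polynomial are needed. Note also that the semigroup fixes constants rather than killing them, which is why $\Iin\otimes\Iin^*$ must be subtracted.

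Your discretization also rests on an unproved claim. For Voronoi cells of an arbitrary $\delta\sqrt n$-net, linear functions need not be uniformly $O(\delta)$-close in $L^2(\gamma)$ to their cell averages. For example, the grid with spacing $\delta\sqrt n$ in $x_1$ and $\sqrt3\,\delta$ in the other coordinates is a $\delta\sqrt n$-net with $\exp(O_{\delta,R}(n))$ points in the ball of radius $R\sqrt n$. Yet $\norm{(1-P_\infty)x_1}$ is close to $1$ once $\delta\sqrt n\gg1$. A diameter bound only controls the within-cell variance averaged over directions. What you need is the operator-norm bound $\norm{(1-P_\infty)V_\infty}\le\ve$. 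The paper gets it from a product partition $U_i=\prod_kJ_{i_k}$ into intervals: $P_\infty$ is then a tensor power, so $\norm{(1-P_\infty)\sum_k\lambda_kx_k}^2=\sum_k\abs{\lambda_k}^2\norm{(1-P_\infty)x}^2$. The transfer to $[N]$ is the estimate $\norm{PV-\iota P_\infty V_\infty}=O(\delta)$, proved by a weighted Chernoff bound and a union bound over the cells and a $\frac12$-net of $S^{n-1}$. Together with $V^*V\le1+\delta$, this gives $\norm{(1-P)V}\le\ve+O(\sqrt\delta)$. Your rewriting of $VV^*$ as $\frac1N\sum_k\xi_k\xi_k^*$ plays no role here.
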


We show that this proposition implies Theorem~A in this subsection, and prove the proposition in the next subsection.

\begin{proof}[Proof of Prop \ref{Prop}$\Rightarrow$Thm A]
    First, we show that Proposition~\ref{Prop} implies
    the corresponding complex statement by the $2\times2$ trick.
    Let $V'$ be an independent copy of $V$, then $\frac1{\sqrt{2}}(V+iV')\in\IM(N\times n,\IC)$ is the complex Gaussian matrix.
    Applying Proposition~\ref{Prop} to $\frac{1}{\sqrt{2}}\smallmat{V\\V'}\in\IM(2N\times n,\IR)$, we obtain that $\frac12\smallmat{V\\V'}\smallmat{V\\V'}^*$ is $\ve$-$d$-width with high probability.
    Therefore, $\frac12 VV^*,\frac12VV'^*,\frac12V'V^*,\frac12V'V'^*$ are $\ve$-$d$-width and $\frac12(V+iV')(V+iV')^*$ is $4\ve$-$4d$-width with high probability.

    Second, we can replace the complex Gaussian matrix $V\in\IM(N\times n,\IC)$ with the Haar isometry $V'\in\Isom(N\times n)$.
    Consider the polar decomposition of $V\in\IM(N\times n)$ $V=V'\abs{V}$ for some $V'\in\Isom(N\times n)$ with $\Im V'=\Im V$.
    By the rotation invariance of the Gaussian, $V'\in\Isom(N\times n)$ is distributed uniformly, so it suffices to show that $\norm{V'-V}\leq\ve$ with high probability.
    This follows from $1-\ve\leq\abs{V}\leq1+\ve$ \cite[Thm. 4.6.1]{Ver18}.
    Thus, the Haar random projection with rank $n$ is $\ve$-$d$-width with probability at least $1-\exp(-N\exp(-O_\ve(n)))$.

    Third, we show that $VaV^*$ is $\ve$-$d$-width for every $a\in\IM_n^{\leq1}$ and the Haar random $V\in\Isom(N\times n)$ with high probability.
    Choose an $\ve$-dense subset $F\subset\set{p\in\IM_n\mid p\text{ is projection}}$ with $\abs{F}\leq (1+\frac2\ve)^{2n^2}$ \cite[Cor. 4.2.11]{Ver18}.
    Since $VpV^*$ is distributed as the Haar random projection with rank $\mathrm{rank}(p)$ for $p\in F$, 
    \[\IP[\forall p\in F\ VpV^*\text{ is $\ve$-$d$-width}]\geq1-\abs{F}\exp(-N\exp(-O_\ve(n)))\]
    holds.
    We claim that if $VpV^*$ is $\ve$-$d$-width for every $p\in F$ then $VaV^*$ is $O(\ve)$-$dO(\ve^{-1})$-width for every $a\in\IM_n^{\leq1}$.
    This claim finishes the proof because $\log\abs{F}=O(n^2)$ is sufficiently smaller than $N\exp(-O_\ve(n))$.

    We show the claim.
    $VpV^*$ is $2\ve$-$d$-width for every projection $p\in\IM_n$.
    Every positive contraction $a\in\IM_n^{+,\leq1}$ can be approximated within $\ve$ by a convex combination of at most $1+\frac{1}{\ve}$ projections by the layer-cake decomposition, so $VaV^*$ is $3\ve$-$(1+\frac{1}{\ve})d$-width.
    Therefore, $VaV^*$ is $12\ve$-$4(1+\frac1\ve)d$-width for every contraction $a\in\IM_{n}^{\leq1}$.
\end{proof}

\subsection{Proof of Prop \ref{Prop}}

Throughout this section, we work over the real field.
Set the Gaussian random matrix $V\in\IM(N\times n,\IR)$ and the embedding $V_\infty\colon\IR^n\hookrightarrow L^2(\IR^n,\gamma)$ mapping to linear functions on $\IR^n$.
To compare these (almost-)isometries, fix a partition $\IR^n=\bigsqcup_{i\in I}U_i$ and $[N]=\bigsqcup_{i\in I}X_i$, where $X_i\coloneqq F^{-1}(U_i)$ and $F\colon[N]\to\IR^n$ maps each point $p$ to the $p$-th row of $\sqrt{N}V$.
Define probability vectors $\mu_\infty\colon I\ni i\mapsto\gamma(U_i)\in\IR$ and $\mu\colon I\ni i\mapsto\frac{\abs{X_i}}{N}\in\IR$, and the projections $P_\infty$ onto $\ell_2(I,\mu_\infty)\subset L^2(\IR^n,\gamma)$ and $P$ onto $\ell_2(I,\mu)\subset\ell_2^N$ which correspond to the partitions.
The identification $\iota\colon\ell_2(I,\mu_\infty)=\IC^I=\ell_2(I,\mu)$ has the operator norm $\max_i\sqrt{\frac{\mu(i)}{\mu_\infty(i)}}$ and is bounded below with $\min_i\sqrt{\frac{\mu(i)}{\mu_\infty(i)}}$.

To avoid confusion, we only treat probability measures and $L^2$-spaces on these, except $\IR^n$.
In this convention, we denote $\sqrt{\frac1N\sum_{l\in[N]}\abs{\xi(l)}^2}$ for $\xi\in\ell_2$ by $\norm{\xi}$ because this is the $L^2$-norm of the uniform probability measure on $[N]$.
$V_\infty(y)=(x\mapsto\inpro{y,x})\in L^2(\IR^n,\gamma)$, $V(y)=(p\mapsto\inpro{y,F(p)})\in\ell_2^N$, $P_\infty(f)=(i\mapsto\mu_\infty(i)^{-1}\int_{U_i}fd\gamma)\in\ell_2(I,\mu_\infty)$ and $P(\xi)=(i\mapsto\mu(i)^{-1}\frac1N\sum_{l\in X_i}\xi(l))\in\ell_2(I,\mu)$ holds in our convention.
There is the natural unit vector $\Iin$ in every $L^2$-space, and $\Iin$ is preserved under $P, P_\infty,\iota$ but not $\iota^*$.
This will appear as the Perron--Frobenius eigenvector for many nonnegative matrices.

\begin{lem}
    Let $\ve>0$.
    There exists a partition $\{U_i\}_{i\in I}$ with $\abs{I}\leq O(\ve^{-1}\sqrt{\log\ve^{-1}})^n$ such that $\mu_\infty^{\text{min}}:=\min_i\mu_\infty(i)$ is $\exp(-O_\ve(n))$ and $\norm{(1-P_\infty)V_\infty}\leq\ve$.
\end{lem}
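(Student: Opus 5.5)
We will build the partition as a product of one-dimensional partitions. Interpret $\norm{(1-P_\infty)V_\infty}$ as an operator norm from $\IR^n$ (Euclidean) to $L^2(\IR^n,\gamma)$. Choose a partition $\IR=\bigsqcup_{j\in J}W_j$ into finitely many intervals. Take $\{U_i\}_{i\in I}$ with $I=J^n$ and $U_i=W_{i_1}\times\dots\times W_{i_n}$. Then $\abs{I}=\abs{J}^n$ and $\mu_\infty^{\min}=(\min_j\gamma_1(W_j))^n$, where $\gamma_1$ is the standard one-dimensional Gaussian. So it suffices to find a one-dimensional partition with $\abs{J}=O(\ve^{-1}\sqrt{\log\ve^{-1}})$, with $\min_j\gamma_1(W_j)$ bounded below by a constant depending only on $\ve$, and with a small one-dimensional error.

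First we reduce the error computation to dimension one. The product structure makes $P_\infty$ the tensor product $\bigotimes_k Q$ of one-dimensional conditional expectations $Q$ on $L^2(\IR,\gamma_1)$. For $y\in\IR^n$ we have $V_\infty y=\sum_k y_k x_k$, and $P_\infty(x_k)=Q(x_k)$ in the $k$-th variable. Hence $(1-P_\infty)V_\infty y=\sum_k y_k\,((1-Q)x)(x_k)$. These summands are functions of distinct coordinates, each with mean zero, because $Q$ preserves integrals. They are therefore orthogonal in $L^2(\gamma)$, which gives
\[
\norm{(1-P_\infty)V_\infty y}^2=\norm{y}^2\,\norm{(1-Q)x}_{L^2(\gamma_1)}^2 .
\]
So everything reduces to making $\norm{x-Qx}_{L^2(\gamma_1)}\leq\ve$.

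For the one-dimensional construction, set $R=C\sqrt{\log\ve^{-1}}$. Partition $[-R,R]$ into intervals of length $\ve$, and add the two tails $(-\infty,-R)$ and $(R,\infty)$. This gives $\abs{J}=O(\ve^{-1}\sqrt{\log\ve^{-1}})$.
\begin{itemize}
\item On each bounded interval, $\abs{x-Qx}\leq\ve$ pointwise, so these intervals contribute at most $\ve^2$ to $\norm{x-Qx}^2$.
\item On each tail, the contribution is at most $\int_{\abs{x}>R}x^2\,d\gamma_1\lesssim Re^{-R^2/2}$. This is $\leq\ve^2$ once $C$ is a suitable absolute constant.
\end{itemize}
Every cell has $\gamma_1$-measure at least $\ve\,\varphi(R)\gtrsim\ve\cdot\ve^{C^2/2}$ or at least the tail mass. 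Either way this is a positive constant $c_\ve$, so $\mu_\infty^{\min}\geq c_\ve^n=\exp(-O_\ve(n))$. Rescaling $\ve$ by a constant finishes the proof.

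The main obstacle is the reduction to dimension one, specifically the orthogonality of the summands across coordinates. The naive bound sums the errors over $k$ and yields $\sqrt n\,\ve$, which is useless. The argument works only because the cells form a product partition, so that $P_\infty$ factors as $\bigotimes_k Q$ and each summand is a mean-zero function of a single coordinate.
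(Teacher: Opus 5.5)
Your proof is correct and follows essentially the same route as the paper. You take a product of one-dimensional partitions (length-$\ve$ intervals on $[-R,R]$ with $R\asymp\sqrt{\log\ve^{-1}}$, plus tails) and reduce to $n=1$ via $P_\infty=Q^{\otimes n}$ and $Q\Iin=\Iin$, which makes the per-coordinate errors orthogonal. The only differences are cosmetic: you use two separate tail cells where the paper lumps both tails into one cell $J_0$, and you spell out the orthogonality and the Gaussian tail estimate that the paper leaves implicit.
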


\begin{proof}
    First, we show this for $n=1$.
    Define $J_0:=\IR\setminus[-\ve m,\ve m]$ and let $J_1,\dots,J_{2m}$ be the intervals with length $\ve$ for some $m=O(\ve^{-1}\sqrt{-\log\ve})$.
    One has $\norm{(1-P_\infty)x}\leq\ve$ for the identity $x\in L^2(\IR,\gamma)$ by the Gaussian tail estimate.
    Actually, $O(\ve^{-1})$ suffices \cite[Thm. 6.2]{GL00}.
    
    Second, for general $n$, we define $I=\set{0,\dots,2m}^n$ and $U_i\coloneqq\prod_kJ_{i_k}\subset\IR^n$, where $i=(i_1,\dots,i_n)\in I$.
    Then $P_\infty$ for this partition on $L^2(\IR^n,\gamma)=L^2(\IR,\gamma_1)^{\otimes n}$ is $(P_\infty\text{ for $n=1$})^{\otimes n}$, and a linear function $\sum_k\lambda_kx_k\in L^2(\IR^n,\gamma)=L^2(\IR,\gamma_1)^{\otimes n}$ is $\sum_k\lambda_k(\Iin^{\otimes k-1}\otimes x\otimes\Iin^{\otimes n-k})$.
    Since $P_\infty\Iin=\Iin$, we have
    \[\norm{(1-P_\infty)\sum_k\lambda_k x_k}^2=\sum_k\abs{\lambda_k}^2\norm{(1-P_\infty)x}^2_{L^2(\IR)}\leq\ve^2\]
    for $\sum_k\abs{\lambda_k}^2\leq1$.
    We also have $\mu_\infty^{\text{min}}=(\min_l\gamma(J_l))^n\geq\exp(-O_\ve(n))$
\end{proof}

Note that this condition $\norm{(1-P_\infty)V_\infty}\leq\ve$ is equivalent to $\norm{P_\infty V_\infty(y)}\geq\sqrt{1-\ve^2}\norm{y}$ for every $y\in\IR^n$ because $\abs{P_\infty V_\infty}^2+\abs{(1-P_\infty)V_\infty}^2=1$.

Since $\{F(l)\}_{l\in[N]}$ are independent random variables distributed on $(\IR^n,\gamma)$,
\[\mu(i)=\frac1N\sum_l\Iin_{U_i}(F(l))\to\IE_{X\sim\gamma}[\Iin_{U_i}(X)]=\mu_\infty(i)\quad\text{as }N\to\infty\]
by the law of large numbers.
If we need the quantitative behavior, we apply the multiplicative Chernoff bound to deduce $\IP[e^{-\delta}\mu_\infty(i)\leq\mu(i)\leq e^\delta\mu_\infty(i)]\geq1-2\exp(-N\mu_\infty^{\text{min}}\frac{\delta^2}{3})$
for $\delta>0$.
Thus, $e^{-2\delta}\leq\iota^*\iota\leq e^{2\delta}$ with high probability.
For $y\in\IR^n$ with $\norm{y}=1$, we have
\[\mu(i)(PV(y))_i=\frac1N\sum_l\Iin_{U_i}(F(l))\inpro{y,F(l)}\to\IE_{X\sim\gamma}[\Iin_{U_i}(X)\inpro{y,X}]=\mu_\infty(i)(P_\infty V_\infty(y))_i.\]
We also have the quantitative estimate.

\begin{lem}
    $\norm{PV-\iota P_\infty V_\infty}\leq O(\delta)$ holds with high probability.
\end{lem}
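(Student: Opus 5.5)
The plan is to compare the two maps coordinatewise in $I$, viewing both as linear maps $\IR^n\to\IC^I$. For fixed $i$, consider the vector
\[w_i\coloneqq\frac1N\sum_l\Iin_{U_i}(F(l))F(l)\in\IR^n, \qquad w_i^\infty\coloneqq\IE_{X\sim\gamma}[\Iin_{U_i}(X)X].\]
Then $\mu(i)(PV(y))_i=\inpro{y,w_i}$ and $\mu_\infty(i)(P_\infty V_\infty(y))_i=\inpro{y,w_i^\infty}$. Writing $\iota P_\infty V_\infty(y)_i=\inpro{y,w^\infty_i}/\mu_\infty(i)$, the difference splits into two terms:
\[(PV-\iota P_\infty V_\infty)(y)_i=\frac{\inpro{y,w_i-w_i^\infty}}{\mu(i)}+\inpro{y,w_i^\infty}\Big(\frac1{\mu(i)}-\frac1{\mu_\infty(i)}\Big).\]
I will bound the two terms separately in the norm of $\ell_2(I,\mu)$.

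The second term is controlled by the Chernoff event already established: $e^{-\delta}\le\mu(i)/\mu_\infty(i)\le e^{\delta}$ for all $i$, with failure probability at most $2\abs{I}\exp(-N\mu_\infty^{\min}\delta^2/3)$. On this event the second term equals $(\iota P_\infty V_\infty(y))_i$ times a factor $O(\delta)$. Its $\ell_2(I,\mu)$-norm is therefore at most $O(\delta)\norm{\iota}\norm{P_\infty V_\infty}\le O(\delta)$, since $\norm{\iota}\le e^{\delta/2}$.

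The first term is the main obstacle, because its bound must be uniform in $y$ and over all of $I$. Its squared norm is
\[\sum_i\mu(i)^{-1}\inpro{y,w_i-w_i^\infty}^2.\]
I would prove a per-cell estimate $\norm{w_i-w_i^\infty}\le\delta\,\mu_\infty(i)\,c_i$, where $c_i$ is a suitable scale: for bounded cells $c_i\le\sqrt{n}\,m\ve$, and for cells meeting $J_0$ it is comparable to the conditional second moment. Summing over $i$ then gives $O(\delta^2)$ after using $\sum_i\mu_\infty(i)c_i^2\lesssim\IE\norm{X}^2$-type bounds. However, a naive bound through $\norm{X}^2=n$ loses a factor of $n$.

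To avoid this loss I would instead argue via operator norms. By Cauchy--Schwarz within each cell,
\[\mu(i)^{-1}\inpro{y,w_i-w^\infty_i}^2\le\mu(i)^{-1}\cdot\Big(\tfrac1N\sum_{l\in X_i}\inpro{y,F(l)-\bar w_i}\cdots\Big).\]
A cleaner route is to write $PV=\Pi^*\circ D$, where $\Pi$ is the cell-indicator matrix, and to compare the $\abs{I}\times n$ matrices $W=(w_i)$ and $W_\infty=(w_i^\infty)$ after weighting by $\mu^{-1/2}$. The weighted matrix $\mu^{-1/2}W=\frac1N\sum_l \mu(i(l))^{-1/2}e_{i(l)}F(l)^*$ is an average of $N$ i.i.d. random rank-one matrices with mean $\mu_\infty^{-1/2}W_\infty+O(\delta)$. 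A matrix Bernstein inequality, applied to the truncated variables $F(l)\Iin_{\norm{F(l)}\le C\sqrt n}$, gives operator-norm deviation at most $\delta$ once $N\ge N_0=\exp(O_\ve(n))/\delta^2$, up to polylogarithmic factors. The failure probability is then $\exp(-N\exp(-O_\ve(n)))$, matching Proposition~\ref{Prop}. The truncation error is handled by the Gaussian tail, which is exponentially small in $n$ and much smaller than $\delta$.

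Combining the two terms on the intersection of the Chernoff and Bernstein events yields $\norm{PV-\iota P_\infty V_\infty}\le O(\delta)$ with probability at least $1-\exp(-N\exp(-O_\ve(n)))$. The per-summand variance in the Bernstein step is bounded by $(\mu_\infty^{\min})^{-1}\cdot O(n)$. This is where $\mu_\infty^{\min}=\exp(-O_\ve(n))$ from the preceding lemma enters, and it absorbs the polynomial factors in $n$ and $\abs{I}$ into $\exp(O_\ve(n))$.
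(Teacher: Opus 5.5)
Your decomposition of $(PV-\iota P_\infty V_\infty)(y)_i$ into $\inpro{y,w_i-w_i^\infty}/\mu(i)$ plus the ratio term is exactly the paper's. Your treatment of the ratio term also coincides with it, so the two routes differ only in the first term.

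The paper does not build uniformity in $y$ into the probabilistic estimate. It fixes a unit vector $y$, so that $\inpro{y,X}$ is a one-dimensional standard Gaussian. It then applies a scalar Chernoff bound in each cell with tolerance $\ve_i=\delta\mu_\infty(i)+\delta^{-1}\max_\pm\int_{U_i}\vp(\pm\delta\inpro{y,x})\,d\gamma(x)$, where $\vp(t)=e^t-1-t$. Cauchy--Schwarz then gives $\sum_i\ve_i^2/\mu_\infty(i)\le O(\delta^2)$ through a one-dimensional Gaussian integral. No factor of $n$ appears, and no truncation is needed, since the exponential moment handles the unbounded cells. Uniformity in $y$ costs only a union bound over a $\frac12$-net of $S^{n-1}$ of size $5^n$, which is negligible against $\exp(-N\mu_\infty^{\min}\delta^2/3)$.

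So the per-cell route you abandoned does work. Its loss of $n$ came from bounding $\norm{w_i-w_i^\infty}$ rather than $\inpro{y,w_i-w_i^\infty}$ for fixed $y$.

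Your matrix Bernstein route gives the operator-norm bound directly, without a net. The cost is a heavier tool, a truncation step, and variance/dimension bookkeeping: the variance and the uniform bound on the summands are $\exp(O_\ve(n))$, and the prefactor is $\abs{I}+n$. All of this is absorbed once $N\ge\exp(O_\ve(n))$, so it is a valid alternative.

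Two steps need tightening.

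First, the summands $\mu(i(l))^{-1/2}e_{i(l)}F(l)^*$ are not independent, because $\mu$ depends on the whole sample. Weight instead by the deterministic $\mu_\infty(i)^{-1/2}$, so the mean is exactly $\mu_\infty^{-1/2}W_\infty$. Then pay a factor $e^{\delta/2}$ using $\mu(i)\ge e^{-\delta}\mu_\infty(i)$ on the Chernoff event.

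Second, the truncation error has two parts, and each needs its own argument.
\begin{itemize}
\item The expected tail has operator norm at most $\paren{\IE\norm{X}^2\Iin_{\norm{X}>C\sqrt n}}^{1/2}$, by Cauchy--Schwarz in each cell. You must choose $C$ depending on $\delta$ so that this is at most $\delta$ for every $n$; smallness in $n$ alone fails for small $n$.
\item The empirical tail has Frobenius norm at most $e^{\delta/2}\paren{\frac1N\sum_l\norm{F(l)}^2\Iin_{\norm{F(l)}>C\sqrt n}}^{1/2}$. This needs its own concentration estimate, e.g.\ Bernstein for sub-exponential variables, with failure probability $\exp(-N\exp(-O_\ve(n)))$.
\end{itemize}
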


\begin{proof}
    Set $\vp(t)\coloneqq\exp(t)-1-t,\ \ve_i\coloneqq\delta\mu_\infty(i)+\delta^{-1}\max_{\pm}\int_{U_i}\vp(\pm\delta\inpro{y,x})d\gamma(x)$.
    
    First, we show that $\IP[\abs{\mu(i)PV(y)_i-\mu_\infty(i)P_\infty V_\infty(y)_i}\geq\ve_i]\leq2\exp(-N\mu_\infty(i)\delta^2)$.
    By the Chernoff bound and $\log\IE[\exp(\pm\delta\Iin_{U_i}(X)\inpro{y,X})]\leq\IE[\Iin_{U_i}(X)(\exp(\pm\delta\inpro{y,X})-1)]$, it suffices to show that $\ve_i\delta-\IE[\Iin_{U_i}\vp(\pm\delta\inpro{y,X})]\geq\delta^2\mu_\infty(i)$, which is trivial.

    Second, we show that $\sum_i\frac{\ve_i^2}{\mu_\infty(i)}$ is small. Indeed, one has
    \begin{align*}
        \sqrt{\sum_i\frac{\ve_i^2}{\mu_\infty(i)}}&\leq\sqrt{\sum_i\frac{(\delta\mu_\infty(i))^2}{\mu_\infty(i)}}+\sqrt{\sum_i\frac{\bigl(\delta^{-1}\max_\pm\int_{U_i}\vp\dots\bigr)^2}{\mu_\infty(i)}}\\
        &=\delta+\delta^{-1}\sqrt{\sum_i\max_\pm\frac{\bigl(\int_{U_i}\vp(\pm\delta\inpro{y,x})d\gamma(x)\bigr)^2}{\mu_\infty(i)}}\\
        &\leq\delta+\delta^{-1}\sqrt{\sum_i\max_\pm\int_{U_i}\vp(\pm\delta\inpro{y,x})^2d\gamma(x)}\\
        &\leq\delta+\delta^{-1}\sqrt{\int_\IR\bigl(\vp(-\delta x)^2+\vp(\delta x)^2\bigr)d\gamma_1(x)}\\
        &=O(\delta).
    \end{align*}
    
    Third, we show that $\norm{PV(y)-\iota P_\infty V_\infty(y)}\leq O(\delta)$ for $y\in F$, where $F\subset S^{n-1}$ is a $\frac12$-dense subset with $\abs{F}\leq5^n$.
    Define $\xi_i\coloneqq PV(y)_i$ and $\xi'_i\coloneqq P_\infty V_\infty(y)_i$.
    Except with probability $2\abs{I}(1+\abs{F})\exp(-N\mu_\infty^{\text{min}}\frac{\delta^2}{3})$, we may assume that $e^{-\delta}\leq\frac{\mu(i)}{\mu_\infty(i)}\leq e^\delta$ and $\abs{\mu(i)\xi_i-\mu_\infty(i)\xi'_i}\leq\ve_i$ for every $y\in F$.
    One has
    \begin{align*}
        \sqrt{\sum_i\mu(i)\abs{\xi_i-\xi'_i}^2}&\leq\sqrt{\sum_i\mu(i)\abs*{\xi_i-\frac{\mu_\infty(i)}{\mu(i)}\xi'_i}^2}+\sqrt{\sum_i\mu(i)\abs*{\left(1-\frac{\mu_\infty(i)}{\mu(i)}\right)\xi'_i}^2}\\
        &\leq e^{\frac\delta2}\sqrt{\sum_i\frac{\ve_i^2}{\mu_\infty(i)}}+e^{\frac\delta2}(e^\delta-1)\sqrt{\sum_i\mu_\infty(i)\abs{\xi'_i}^2}\\
        &\leq \sqrt{e^\delta}O(\delta)+\sqrt{e^\delta}(e^\delta-1)\norm{P_\infty V_\infty}\\
        &=O(\delta).
    \end{align*}
    Therefore, $\norm{PV-\iota P_\infty V_\infty}\leq2\sup_{y\in F}\norm{PV(y)-\iota P_\infty V_\infty(y)}\leq O(\delta)$ holds.
\end{proof}

We have $\norm{PV(y)}\geq (e^{-\delta}\sqrt{1-\ve^2}-O(\delta))\norm{y}\geq\sqrt{1+\delta-(\ve+O(\sqrt{\delta}))^2}\norm{y}$.
Hence, we also have $\norm{(1-P)V}\leq\ve+O(\sqrt\delta)$ and $\norm{V-\iota P_\infty V_\infty}\leq\ve+O(\sqrt{\delta})$ because $\abs{PV}^2+\abs{(1-P)V}^2=V^*V\leq1+\delta$ with high probability \cite[Thm. 4.6.1]{Ver18}.

The integral operator $T_t$ on $L^2(\IR^n,\gamma)$ defined as
\[(T_tf)(x)\coloneqq\int_{\IR^n}f(e^{-t}x+\sqrt{1-e^{-2t}}y)d\gamma(y)\]
with a positive symmetric kernel function is called the Ornstein--Uhlenbeck semigroup.
This operator has the spectrum $\{1,e^{-t},e^{-2t},\dots\}\cup\{0\}$ with the corresponding eigenspaces $\IR\Iin,\{\text{linear functions}\},\{\text{Hermite polynomials with degree 2}\},\dots$.
This may be viewed as a Gaussian analogue of linearity testing in \cite{BLR93}:
the quantity below controls the distance of $f\in L^2(\IR^n,\gamma)\ominus\IR\Iin$ from the space of linear functions.
\[\norm{f-e^tT_t(f)}^2\leq e^{2t}\IE_{X,Y\sim\gamma}[\abs{e^{-t}f(X)+\sqrt{1-e^{-2t}}f(Y)-f(e^{-t}X+\sqrt{1-e^{-2t}}Y)}^2].\]
By $\norm{T_t-(\Iin\otimes\Iin^*+e^{-t}V_\infty V_\infty^*)}\leq e^{-2t}$, the operator $T^I_t\coloneqq\iota P_\infty T_tP_\infty^*\iota^*$ on $\ell_2(I,\mu)$ satisfies $\norm{T^I_t-(\Iin\otimes\Iin^*+e^{-t}(\iota P_\infty V_\infty)(\iota P_\infty V_\infty)^*)}\leq e^{2\delta}e^{-2t}$ with high probability.
Thus, one also has $\norm{T^I_t-(\Iin\otimes\Iin^*+e^{-t}VV^*)}\leq e^{2\delta} e^{-2t}+e^{-t}(\ve+O(\sqrt\delta))$ with high probability.

Since $T^I_t\in\IB(\ell_2(I,\mu))$ has nonnegative entries and approximately fixes $\Iin$, we perturb it so that it fixes $\Iin$ exactly.

\begin{lem}
    There is $T'_t\in\IB(\ell_2(I,\mu))_{\text{sa}}$ such that each entry $\inpro{\delta_i,T'_t\delta_j}$ belongs to $N^{-1}\IN$, $T'_t\Iin=\Iin$ and $\norm{T^I_t-T'_t}\leq\frac{2e^\delta\abs{I}}{\mu_\infty^{\text{min}}N}+3\delta=3\delta+N^{-1}\exp(O(n))$.
\end{lem}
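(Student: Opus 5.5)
The plan is to work with the weighted adjacency matrix $W_{ij}\coloneqq\inpro{\delta_i,T\delta_j}=\mu(i)T_{ij}$ rather than with $T$ itself. The reason is that for an operator $T$ on $\ell_2(I,\mu)$ the three required properties become simple conditions on $W$:
\begin{itemize}
\item $T$ is self-adjoint iff $W$ is symmetric;
\item $T\Iin=\Iin$ iff $\sum_jW_{ij}=\mu(i)$ for every $i$;
\item the entry condition asks exactly that $W_{ij}\in N^{-1}\IN$.
\end{itemize}
Since $\mu(i)=\abs{X_i}/N\in N^{-1}\IN$, we are looking for an integer-weighted graph $NW$ with degrees $\abs{X_i}$. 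I would build it from $W^I$, the matrix of $T^I_t$, in two steps: first repair the row sums, then round.

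First I would compute $W^I$. Put $K_{ij}\coloneqq\inpro{\Iin_{U_i},T_t\Iin_{U_j}}_\gamma$. This is symmetric and nonnegative, because the Ornstein--Uhlenbeck kernel is positive. Since $T_t\Iin=\Iin$, its row sums are $\mu_\infty(i)$. Since $\iota^*$ is multiplication by $\mu/\mu_\infty$, we get
\[W^I_{ij}=\frac{\mu(i)\mu(j)}{\mu_\infty(i)\mu_\infty(j)}K_{ij}.\]
On the high-probability event $e^{-\delta}\leq\mu/\mu_\infty\leq e^\delta$, the row sums $r_i$ of $W^I$ therefore satisfy $e^{-2\delta}\mu(i)\leq r_i\leq e^{2\delta}\mu(i)$.

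Next I would repair the row sums. Set $W^{(1)}\coloneqq e^{-2\delta}W^I+\diag\paren{\mu(i)-e^{-2\delta}r_i}$. The added diagonal is nonnegative, and $W^{(1)}$ is symmetric with row sums exactly $\mu(i)$. In $\ell_2(I,\mu)$ the diagonal part acts as multiplication by $1-e^{-2\delta}r_i/\mu(i)\in[0,1-e^{-4\delta}]$. Hence the corresponding operator $T^{(1)}$ satisfies
\[\norm{T^{(1)}-T^I_t}\leq(1-e^{-2\delta})\norm{T^I_t}+(1-e^{-4\delta}).\]
Using $\norm{T^I_t}\leq\norm{\iota}^2\leq e^\delta$, this is $O(\delta)$, and with a little care it is at most $3\delta$. (One can instead split the scaling more cleverly; the constant is not important.)

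Then I would round. For $i\neq j$, replace $W^{(1)}_{ij}$ by $N^{-1}\lfloor NW^{(1)}_{ij}\rfloor$; this is symmetric since $W^{(1)}$ is. Then redefine the diagonal as $\mu(i)$ minus the new off-diagonal row sum. This diagonal lies in $N^{-1}\IZ$ because $\mu(i)\in N^{-1}\IN$. It is nonnegative because rounding down only decreases the off-diagonal entries, and $W^{(1)}$ had row sums $\mu(i)$ with nonnegative entries. Call the result $W'$ and let $T'_t$ be its operator; it has all three required properties.

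It remains to bound the error $E\coloneqq W'-W^{(1)}$. The off-diagonal part of $E$ is symmetric with entries in $[-1/N,0]$, and the diagonal correction in row $i$ is at most $\abs{I}/N$. As an operator on $\ell_2(I,\mu)$ the error is $\xi\mapsto\mu^{-1}E\xi$, which is self-adjoint. By the Schur test, with weights $\mu$, its norm is at most
\[\max_i\mu(i)^{-1}\sum_j\abs{E_{ij}}\leq\frac{2\abs{I}}{N\mu^{\text{min}}}\leq\frac{2e^\delta\abs{I}}{N\mu_\infty^{\text{min}}}.\]
By the earlier lemma, $\abs{I}/\mu_\infty^{\text{min}}=\exp(O(n))$, and adding the two errors gives the claimed bound.

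The main point needing care is the bookkeeping between the two inner products. One must make sure the operator norm on $\ell_2(I,\mu)$ of a perturbation of $W$ is controlled by row sums divided by $\mu(i)$. This is where $\mu_\infty^{\text{min}}$, and hence the factor $N^{-1}\exp(O(n))$, enters.
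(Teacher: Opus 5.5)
Your proposal is correct and is essentially the paper's construction: the paper also takes the off-diagonal entries $N^{-1}\lfloor Ne^{-\delta}a(i,j)\rfloor$ and lets the diagonal absorb the remainder so that the row sums are exactly $\mu(i)$. It then bounds the whole error $T^I_t-T'_t$ by a single weighted Schur test, whereas you split it into an operator-norm bound for the rescaling and diagonal repair and a Schur bound for the rounding.

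The only slip is the constant. In fact $e^{-\delta}\mu(i)\leq r_i\leq e^{\delta}\mu(i)$: only the ratios $\mu(j)/\mu_\infty(j)$ need bounding, because $\sum_jK_{ij}=\mu_\infty(i)$ exactly. With this and scaling by $e^{-\delta}$, your first-step error is $e^{\delta}-e^{-2\delta}\leq3\delta$ for small $\delta$, whereas the estimate you actually wrote, $(1-e^{-2\delta})e^{\delta}+(1-e^{-4\delta})$, is about $6\delta$. This is harmless, since only $O(\delta)$ is used downstream.
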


\begin{proof}
    Since $\iota^*\delta_i=\frac{\mu(i)}{\mu_\infty(i)}\delta_i$, entries $a(i,j)\coloneqq\inpro{\delta_i,T^I_t\delta_j}$ satisfy
    \[\sum_ja(i,j)=\sum_j\frac{\mu(i)\mu(j)}{\mu_\infty(i)\mu_\infty(j)}\inpro{\Iin_{U_i},T_t\Iin_{U_j}}\leq e^\delta\frac{\mu(i)}{\mu_\infty(i)}\inpro{\Iin_{U_i},T_t\sum_j\Iin_{U_j}}=e^\delta\mu(i).\]
    Similarly, we have $\sum_ja(i,j)\geq e^{-\delta}\mu(i)$.
    Define $n_{i,j}\coloneqq[Na(i,j)e^{-\delta}]\in\IN$ for $i\neq j$ and $n_{i,i}\coloneqq N\mu(i)-\sum_{j\neq i}n_{i,j}\in\IN$.
    Set $T'_t(i,j)\coloneqq\frac{n_{i,j}}{N}$.
    For a symmetric matrix $S\in\IB(\ell_2(I,\mu))$, the operator norms on $\ell_1(I,\mu)$ and $\ell_\infty(I,\mu)$ are both equal to $\sup_i\mu(i)^{-1}\sum_j\abs{S(i,j)}\geq\norm{S}_{\IB(\ell_2(\mu))}$ by the Schur test.
    Thus, we need to bound $\mu(i)^{-1}\sum_j\abs{a(i,j)-T'_t(i,j)}$.
    \begin{align*}
        \sum_j\abs{a(i,j)-T'_t(i,j)}
        \leq&\sum_ja(i,j)+\mu(i)-2\sum_jN^{-1}[Na(i,j)e^{-\delta}]\\
        \leq&\mu(i)+\frac{2\abs{I}}{N}-(2e^{-\delta}-1)\sum_ja(i,j)\\
        \leq&\mu(i)(1-2e^{-2\delta}+e^{-\delta})+\frac{2\abs{I}}{N}
    \end{align*}
    holds.
\end{proof}

Hence, $n_{i,j}=NT'_t(i,j)$ satisfies $\sum_jn_{i,j}=N\mu(i)=\abs{X_i}$ and $n_{i,j}=n_{j,i}$.
Decompose $X_i$ into $\bigsqcup_jX_{i,j}$ with $\abs{X_{i,j}}=n_{i,j}$.
Fix bijections from $X_{i,j}$ to $X_{j,i}$ for $i\neq j$ and combine them into an involution $\sigma$ on $\bigsqcup_{i,j} X_{i,j}=[N]$ by imposing on $\sigma$ to be identity on $X_{i,i}$.
Choose $d$-regular expanders $G_i$ on $X_i$ for an even integer $d>2$, and denote by $T_{G_i}$ the adjacency matrix, that is, the matrix whose $(x,y)$-entry is $\frac1d$ if $x,y$ are adjacent and 0 otherwise.
The zig-zag product is the $d^2$-regular graph whose adjacency matrix is $T\coloneqq(\bigoplus_iT_{G_i})\sigma(\bigoplus_iT_{G_i})$.

\begin{lem}
    $PTP=T'_t$ and $\norm{(1-P)T}\leq\max_ir(G_i)$ holds, where $r(G_i)\coloneqq\norm{T_{G_i}\lvert_{\IR\Iin^\perp}}$.
\end{lem}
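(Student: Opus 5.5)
We verify both claims directly from the block structure of $T=D\sigma D$, where $D\coloneqq\bigoplus_iT_{G_i}$. Throughout we use that $P$ is the orthogonal projection of $\ell_2^N$ onto the functions constant on each $X_i$; under the identification with $\ell_2(I,\mu)$ (normalized counting measure on $[N]$), the vector $\delta_i$ corresponds to $\Iin_{X_i}$. Each $T_{G_i}$ is the normalized adjacency matrix of a $d$-regular graph, so it is symmetric, fixes $\Iin_{X_i}$, and preserves $\ell_2(X_i)\ominus\IR\Iin_{X_i}$. Hence $D$ commutes with $P$ and $DP=PD=P$.

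\textbf{First claim.} Since $DP=PD=P$, we get $PTP=PD\sigma DP=P\sigma P$, so it suffices to compute the matrix of $P\sigma P$ in the basis $\{\Iin_{X_i}\}$. The inner product $\inpro{\Iin_{X_i},\sigma\Iin_{X_j}}=\frac1N\abs{X_i\cap\sigma(X_j)}$ is taken in the normalized $\ell_2^N$. By construction $\sigma$ maps $X_{j,k}$ onto $X_{k,j}\subset X_k$, so $X_i\cap\sigma(X_j)=\sigma(X_{j,i})$, which has cardinality $n_{j,i}=n_{i,j}$. (For $i=j$ this is $X_{i,i}$, where $\sigma$ is the identity.) Hence this inner product equals $n_{i,j}/N=T'_t(i,j)$, which is exactly the bilinear form $\inpro{\delta_i,T'_t\delta_j}$ in $\ell_2(I,\mu)$. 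Since $P$ is the identity on $\ell_2(I,\mu)$ under this identification, this gives $PTP=T'_t$ as operators.

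\textbf{Second claim.} Because $P$ commutes with $D$, we have $(1-P)T=(1-P)D\sigma D=D(1-P)\sigma D$. On the range of $1-P$, which is $\bigoplus_i(\ell_2(X_i)\ominus\IR\Iin_{X_i})$, the operator $D$ has norm $\max_i r(G_i)$. Moreover, $\sigma$ is a permutation, hence unitary, and $\norm{D}\leq1$. Therefore
\[\norm{(1-P)T}=\norm{D(1-P)\cdot(1-P)\sigma D}\leq\norm{D(1-P)}\,\norm{\sigma}\,\norm{D}\leq\max_ir(G_i).\]

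\textbf{Main obstacle.} The only delicate step is the bookkeeping of normalizations in the first claim. One must check that the matrix entries of $T'_t$ in $\ell_2(I,\mu)$ are the bilinear form $\inpro{\delta_i,T'_t\delta_j}$, the same convention used in the preceding lemma. With that convention, the identity $\frac1N\abs{X_i\cap\sigma X_j}=\frac{n_{i,j}}N$ closes the argument without any further rescaling by $\mu(i)$.
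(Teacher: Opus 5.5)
Your proof is correct and follows the paper's own argument. The paper likewise gets $PTP=T'_t$ from $\inpro{\Iin_{X_i},\sigma\Iin_{X_j}}=\frac1N\abs{X_{i,j}}$, leaving $DP=PD=P$ implicit. It gets the norm bound from $(1-P)\bigoplus_iT_{G_i}=\bigoplus_i(1-\Iin\otimes\Iin^*)T_{G_i}$, which is exactly your factorization through $D(1-P)$; you have only filled in the normalization bookkeeping the paper omits.
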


\begin{proof}
    The former follows from $\inpro{\Iin_{X_i},\sigma\Iin_{X_j}}_{\ell_2^N}=\frac1N\abs{X_{i,j}}$.
    The latter follows from $(1-P)(\bigoplus T_{G_i})=\bigoplus((1-\Iin\otimes\Iin^*)T_{G_i})$.
\end{proof}

We may assume that $r(G_i)\leq\frac{2\sqrt{d-1}}{d}+o(1)$.
Hence, we have $\norm{T-T'_t}\leq O(d^{-\frac12})$.
Combining the preceding estimates, we obtain
\[\norm{T-(\Iin\otimes\Iin^*+e^{-t}VV^*)}\leq O(d^{-\frac12})+3\delta+N^{-1}\exp(O(n))+e^{2\delta}e^{-2t}+e^{-t}(\ve+O(\sqrt\delta)).\]
We choose $t,d,\delta$ so that $e^{-t}\lesssim\ve,d^{-\frac12}\lesssim\ve e^{-t}$ and $\delta\lesssim e^{-t}\ve\lesssim\ve^2$.
Then all right hand side except $N^{-1}\exp(O(n))$ are at most $e^{-t}O(\ve)$.
Since $N^{-1}\exp(O(n))\leq e^{-t}\ve$ for sufficiently large $N$, $\norm{VV^*-e^t(T-\Iin\otimes\Iin^*)}\leq O(\ve)$ holds.
By expanders, $e^t\Iin\otimes\Iin^*$ is $\ve$-$e^{2t}O(\ve^{-2})$-width, and $VV^*$ is $O(\ve)$-$d_0$-width, where $d_0=d^2+e^{2t}O(\ve^{-2})$, with probability at least $1-\exp(-N\exp(-O(n)))$.

\begin{proof}[Proof of Theorem A]
    So far, we have proved that $\forall\ve\exists d=d(\ve)\forall n\exists N_0\forall N\geq N_0$ the image of the random embedding $\vp:\IM_n\to\IM_N$ is $\ve$-$d$-width with probability greater than $1-\exp(-N\exp(-C_\ve n))$, for a constant $C_\ve$ depending on $\ve$.
    We construct $\Phi\colon\prod_n\IM_n\to\Roe\Univ$ by the random isometries $V_n\colon\ell_2^n\to\ell_2(X_n)$, where $\IN=\bigsqcup_nX_n$ is a partition with $\frac1n\log\abs{X_n}\to\infty$ as $n\to\infty$.
    Fix $\ve>0$. 
    One has
    \begin{align*}
        \sum_n\IP[\text{The image of the random embedding $\vp:\IM_n\to\IM_{\abs{X_n}}$ is not $\ve$-$d$-width}]\\
        \leq\sum_n\exp\paren[\big]{-\abs{X_n}\exp(-C_\ve n)}<\infty
    \end{align*}
    because there are only finitely many $n$ such that $\frac1n\log\abs{X_n}\leq C_\ve+1$.
    By the Borel--Cantelli lemma,
    with probability one, there exists $n_0$ such that, for every $n\geq n_0$, the whole unit ball of $\IM_n$ is mapped to $\ve$-$d$-width operators.
    Therefore,
    \[d'=\max\paren[\big]{d,\max_{n<n_0}\abs{X_n}}\]
    has the property that $\Phi(a)$ is $\ve$-$d'$-width for every $a\in\prod_n\IM_n^{\leq1}$ almost surely.
    Taking the intersection of the probability-one events for $\ve=2^{-m},\ m\in\IN$, finishes the proof.
\end{proof}
\clearpage

\section{Proof of Theorem B}

We review the quantitative notations and algebras in \cite{Man19}.

\begin{defn}
    \begin{itemize}
        \item A vector $\xi\in\ell_2X$ is said to be \emph{$d$-sparse} if its support $\supp \xi=\set{x\mid \xi_x\neq0}$ has the cardinality at most $d$.
        $\xi\in\ell_2X$ is \emph{$\ve$-$d$-sparse} if there exists a $d$-sparse vector $\xi'$ such that $\norm{\xi-\xi'}\leq\ve$.
        \item An operator $T\in\IB(\ell_2X)$ is said to be \emph{$d$-width} if its \emph{support} $\supp T=\set{(x,y)\mid T_{x,y}\neq0}$ has the width at most $d\geq0$.
        $T\in\IB(\ell_2X)$ is \emph{$\ve$-$d$-width} if there exists a $d$-width operator $T'$ such that $\norm{T-T'}\leq\ve$.
        \item An operator $T\in\IB(\ell_2X)$ is said to be \emph{$\ve$-$d$-quasi-local} if there exists $\Sigma\subset X\times X$ with $\Width\Sigma\leq d$ such that for every $A,B\subset X$ with $A\times B\cap\Sigma=\emptyset$ it satisfies $\norm{\Iin_AT\Iin_B}\leq\ve$.
        \item An operator $T\in\IB(\ell_2X)$ is said to be \emph{$d$-$\ell_1$bounded} if the operator norms on $\ell_1,\ell_\infty$ satisfy $\sum_x\abs{T_{x,y}},\sum_x\abs{T_{y,x}}\leq d$ for $y\in X$.
        \item An operator $T\in\IB(\ell_2X)$ is said to be \emph{$\ve$-$d$-inQ} if all columns and rows $T\delta_x,T^*\delta_x$ are $\ve$-$d$-sparse for $x\in X$.
        \item An operator $T\in\IB(\ell_2)$ is said to be \emph{ghost} if matrix entries $T_{x,y}\to0\text{ as }x,y\to\infty$, that is, entries belong to $c_0(\IN\times \IN)$.
    \end{itemize}
\end{defn}

\begin{defn}
    \begin{itemize}
        \item The universal Roe algebra (called $\IB_f$ in \cite{Man19}) is denoted by
        \[\Roe\Univ\coloneqq\set{T\in\IB(\ell_2)\mid\forall\ve>0\exists d>0\ T\text{ is $\ve$-$d$-width}}.\]
        \item The quasi-local algebra $\Ql\Univ$ is defined as
        \[\Ql\Univ\coloneqq\set{T\in\IB(\ell_2)\mid\forall\ve>0\exists d>0\ T\text{ is $\ve$-$d$-quasi-local}}.\]
        \item Set $\IB_T\coloneqq\overline{\set{T\in\IB(\ell_2)\mid\exists d>0\ T\text{ is $d$-$\ell_1$bounded}}}$.
        \item $Q\coloneqq\set{T\in\IB(\ell_2)\mid\forall\ve>0\exists d>0\ T\text{ is $\ve$-$d$-inQ}}$ coincides with $\IB_\omega$ in \cite{Man19} by \cite{GQW24}.
        \item Since $\II\coloneqq\set{T\in\Roe\Univ\mid T\text{ is ghost}}$ is a nonunital $\Cstar$-algebra, we consider its multiplier algebra $M(\II)$.
    \end{itemize}
\end{defn}

\subsection{Proof of $\Roe\Univ=\IB_T$}

The key ingredient is Kyng, Luh, and Song's theorem on matrix discrepancy \cite{KLS20}, which improves the celebrated solution to the Kadison--Singer problem by Marcus, Spielman, and Srivastava \cite{MSS15}.

\begin{prop}\label{AI1}
    Recall that a matrix $T\in\IM(n,\IR)$ is said to be \emph{doubly substochastic} if
    \[T_{i,j}\geq0,\quad \sum_iT_{i,j}\leq1\quad \text{and}\quad\sum_jT_{i,j}\leq1.\]
    There is a universal constant $C$ with the following property:
    For every $n\in\IN$, every doubly substochastic $T\in\IM(n,\IR)$, and every $\ve\in(0,1)$, $T$ is $\ve$-$\lfloor C\ve^{-2}\rfloor$-width.
\end{prop}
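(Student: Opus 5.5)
We plan to reduce to doubly stochastic matrices, then to sparse convex combinations of permutations, and then to invoke the Kyng--Luh--Song matrix discrepancy theorem. First, every doubly substochastic $T\in\IM(n,\IR)$ is the upper-left corner of a doubly stochastic matrix in $\IM(2n,\IR)$: take $\smallmat{T & I-D_r\\ I-D_c & T^{t}}$, where $D_r,D_c$ are the diagonal matrices of row and column sums of $T$. Compressing by a coordinate projection does not increase norm or width. Hence it suffices to treat doubly stochastic $T$. By Birkhoff--von Neumann, $T=\sum_k\lambda_k\sigma_k$ is then a convex combination of permutation matrices. This is a finite combination, but the number of terms may be huge, up to $(n-1)^2+1$.

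The core step is an iterative halving of the support of the convex combination. Write $T=\sum_{k=1}^m\lambda_k\sigma_k$. We want to show that there exist $S\subset[m]$ and new weights $\lambda'$ supported on $S$, with $|S|\le m/2$ roughly, such that
\[\norm[\Big]{\sum_k\lambda_k\sigma_k-\sum_{k\in S}\lambda'_k\sigma_k}\lesssim\sqrt{\max_k\lambda_k}.\]
The tool is KLS in the form
\[\norm[\big]{\sum_k s_kA_k}\lesssim\sqrt{\norm{\textstyle\sum_kA_k^2}}\cdot\sqrt{\log}\text{-free}\quad\text{for some signs } s_k\in\{\pm1\},\]
with $A_k$ Hermitian and $\norm{A_k}$ small. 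We apply it to the Hermitian dilations $A_k=\lambda_k\smallmat{0&\sigma_k\\\sigma_k^{*}&0}$. These satisfy $\sum_kA_k^2=\sum_k\lambda_k^2 I\le(\max_k\lambda_k)\,I$, and KLS gives the bound $O(\sqrt{\max_k\lambda_k})$ rather than needing a $\sqrt{\log n}$ factor. After choosing signs, we keep the terms with $s_k=+1$ and double their weights. Then
\[\sum_k\lambda_k\sigma_k-\sum_{s_k=+1}2\lambda_k\sigma_k=-\sum_ks_k\lambda_k\sigma_k,\]
which has small norm. The problem is that the resulting coefficients need not sum to one and could grow. To fix this, we first split large weights: we replace each $\lambda_k\sigma_k$ by copies $\frac{\lambda_k}{r}\sigma_k$ so that all weights are about $1/M$. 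In the halving step we then apply a balanced variant of discrepancy, adding a scalar coordinate block (the $1\times1$ matrices $\lambda_k$) to the $A_k$, which forces $\sum_k s_k\lambda_k\approx0$. Thus the total mass stays approximately $1$, and the remaining matrix stays doubly substochastic after a rescaling by $(1+\text{small})^{-1}$.

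Iterating, weights of size $w$ become $2w$, with error $O(\sqrt{w})$ at each step. We start from the finest level, with uniform weight $w_0$ (tiny), and stop when the weight reaches $w\approx\ve^2/C'$. The geometric sum of errors $\sum_j\sqrt{2^jw_0}$ up to the final level is $O(\sqrt{w_{\mathrm{final}}})=O(\ve)$. At the end we have a convex combination of at most $1/w_{\mathrm{final}}=O(\ve^{-2})$ permutations, plus the small mass-correction terms. A sum of $K$ permutation matrices has width at most $K$, which gives $\ve$-$\lfloor C\ve^{-2}\rfloor$-width.

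We expect the main obstacle to be keeping the iteration within doubly (sub)stochastic convex combinations with exact bookkeeping of the weights, that is, arranging the balancing constraint inside the KLS statement. If the balancing is awkward, we would fall back on a simpler device. At each step we pair terms arbitrarily and, for each pair, keep one of the two with doubled weight, choosing signs by KLS on the differences $\lambda(\sigma_{2k-1}-\sigma_{2k})$. This automatically preserves the total mass exactly, since each pair has equal weight $w$. The cost is the variance $\sum\norm{\cdot}^2$, which is now bounded by $4w\cdot$(mass) and is still $O(w)$, so the same geometric sum applies.
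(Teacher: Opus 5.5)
There is a genuine gap, and it is in the core halving step. The Kyng--Luh--Song theorem concerns sums $\sum_i\xi_iu_iu_i^*$ of independent scalar random variables times \emph{rank-one} matrices. It gives outcomes $\epsilon_i$ with $\norm{\sum_i(\IE\xi_i-\epsilon_i)u_iu_i^*}\le4\norm{\sum_i\mathrm{Var}(\xi_i)(u_iu_i^*)^2}^{1/2}$. It says nothing about general Hermitian $A_k$, and for those the log-free bound $\min_s\norm{\sum_ks_kA_k}\lesssim\norm{\sum_kA_k^2}^{1/2}$ is false. Your dilations $\lambda_k\smallmat{0&\sigma_k\\\sigma_k^*&0}$ have rank $2n$, and the bound fails for them.

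For a counterexample, take $n=2^m$, let $\sigma_k$ be translation by the basis vector $e_k$ on $\ell_2(\mathbb{Z}_2^m)$, and set $\lambda_k=1/m$. The $\sigma_k$ commute and are diagonalized by the characters, with $\sigma_k\chi=\chi(e_k)\chi$. For any signs $s$, the character with $\chi(e_k)=s_k$ gives $\norm{\sum_ks_k\lambda_k\sigma_k}=1$. You claimed $O(\sqrt{\max_k\lambda_k})=O(m^{-1/2})$, so the loss is exactly the $\sqrt{\log_2 n}$ factor you hoped to avoid. Adding a scalar balancing block can only increase the norm, so the balanced variant fails too. Your pairing fallback fails on the same example: for the differences $\lambda(\sigma_{2k-1}-\sigma_{2k})$, choose $\chi(e_{2k-1})=s_k$ and $\chi(e_{2k})=-s_k$.

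More fundamentally, no scheme that only reweights a fixed Birkhoff decomposition can work. Take $T=\frac1m\sum_{k=1}^m\sigma_k$ as above, which is doubly stochastic (the normalized hypercube adjacency). For real $c_k$ one has $\norm{\sum_kc_k\sigma_k}=\sum_k\abs{c_k}$. Hence every linear combination of at most $m/2$ of the $\sigma_k$ lies at operator-norm distance at least $\frac12$ from $T$. Since $m=\log_2n$ is unbounded, an approximation using $O(\ve^{-2})$ terms must leave the span of the given permutations.

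The paper avoids permutations at this stage entirely. It rounds each entry $tT_{i,j}$ randomly to $\lfloor tT_{i,j}\rfloor$ or $\lfloor tT_{i,j}\rfloor+1$, with mean $tT_{i,j}$ and variance at most $tT_{i,j}$. It then applies KLS to the rank-one matrices $u_{i,j}u_{i,j}^*$ with $u_{i,j}=\delta_i\oplus\delta_j$. In the resulting $2\times2$ block matrix, the off-diagonal block is $t(T-S)$ and the diagonal blocks record the row- and column-sum deviations. A Schur test bounds the variance parameter $\sigma^2$ by $4t$. This yields $\norm{T-S}\le8t^{-1/2}$ and row and column sums of $S$ at most $1+8t^{-1/2}$. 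Since $S$ has entries in $t^{-1}\IN$, its width is at most $t+8\sqrt t$.

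Permutations enter only afterwards, via Hall's theorem, in the paper's Birkhoff Problem 111 result. Your reduction to the doubly stochastic case through the $2n\times2n$ dilation is correct, but it is not needed.
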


\begin{proof}
    Let $t>0$.
    Define independent random variables $\xi_{i,j}$ which take the value $\lfloor tT_{i,j}\rfloor+1$ with probability $tT_{i,j}-\lfloor tT_{i,j}\rfloor$ and take $\lfloor tT_{i,j}\rfloor$ with the remaining probability.
    Note that $\IE[\xi_{i,j}]=tT_{i,j}$ and $v_{i,j}\coloneqq\mathrm{Var}[\xi_{i,j}]\leq tT_{i,j}$.
    Set $u_{i,j}\coloneqq\delta_i\oplus\delta_j\in\ell_2^n\oplus\ell_2^n$.

    We apply \cite{KLS20} to the sum $\sum_{i,j}\xi_{i,j}u_{i,j}\otimes u_{i,j}^*\in\IB(\ell_2^n\oplus\ell_2^n)$ of rank one matrices. Then we have $\epsilon_{i,j}=\lfloor tT_{ij}\rfloor\text{ or }\lfloor tT_{ij}\rfloor+1$ such that 
    \[\norm*{\sum_{i,j}(tT_{i,j}-\epsilon_{i,j})u_{i,j}\otimes u_{i,j}^*}_{\op}\leq4\sigma,\quad\text{where }\sigma^2\coloneqq\norm*{\sum_{i,j}v_{i,j}(u_{i,j}\otimes u_{i,j}^*)^2}_{\op}.\]
    To bound $\sigma$, we apply the Schur test to the non-negative matrix $A\coloneqq\sum_{i,j}v_{i,j}(u_{i,j}\otimes u_{i,j}^*)^2$.
    Set the constant vector $\Iin\in\ell_2^n$.
    \[\norm{A}_{\ell_\infty\to\ell_\infty}
    =\norm{A(\Iin\oplus\Iin)}_{\ell_\infty}
    =\norm*{\sum_{i,j}4v_{i,j}u_{i,j}}_{\ell_\infty}
    \leq\Vmat{\sum_{i,j}4tT_{i,j}\delta_i\\\sum_{i,j}4tT_{i,j}\delta_j}_{\ell_\infty}
    \leq\Vmat{4t\Iin\\4t\Iin}_{\ell_\infty}
    =4t\]
    holds. 
    Similarly, one has $\norm{A}_{\ell_1\to\ell_1}\leq4t$ and $\norm{A}_{\ell_2\to\ell_2}=\sigma^2\leq4t$.

    Set $S\coloneqq(t^{-1}\epsilon_{i,j})_{i,j}\in\IM(n,t^{-1}\IN)$.
    One has
    \[\sum_{i,j}(tT_{i,j}-\epsilon_{i,j})u_{i,j}\otimes u_{i,j}^*
    =\sum_{i,j}t(T_{i,j}-S_{i,j})\pmat{e_{i,i}&e_{i,j}\\e_{j,i}&e_{j,j}}
    =\pmat{tD&t(T-S)\\t(T-S)^*&tD'},
    \]
    \[\text{where}\quad D\coloneqq\diag\paren[\big]{\sum_j(T_{i,j}-S_{i,j})}_i,D'\coloneqq\diag\paren[\big]{\sum_i(T_{i,j}-S_{i,j})}_j\]
    are diagonal matrices.
    Since the norm of this matrix is at most $4\sigma\leq8\sqrt{t}$, we have $\norm{T-S},\norm{D},\norm{D'}\leq8t^{-\frac12}$.
    By $\sum_jS_{i,j},\sum_iS_{i,j}\leq1+8t^{-\frac12}$ and $S_{i,j}\in t^{-1}\IN$, $\Width S\leq t(1+8t^{-\frac12})$ holds.
    Therefore, the proof ends by choosing $t>0$ to be $\ve=8t^{-\frac12}$ and $C>0$ to be $t(1+8t^{-\frac12})\leq C\ve^{-2}$.
\end{proof}

\begin{proof}[Proof of $\Roe\Univ=\IB_T$]
    Since a $d$-width operator $T\in\IB(\ell_2)_{\leq1}$ is $d$-$\ell_1$bounded, we have $\Roe\Univ\subset\IB_T$.
    Let $T\in\IB(\ell_2)$ be $d$-$\ell_1$bounded. To show that $T\in\Roe\Univ$, we may assume $T$ is doubly substochastic by scaling and decomposing each matrix entry into the positive and negative parts of its real and imaginary components. 

    Write $\ell_2$ as the increasing union $\ell_2=\overline{\bigcup_n\ell_2^n}$. 
    Define the projection $p_n$ onto $\ell_2^n\subset\ell_2$.
    Using the previous proposition for the doubly substochastic $p_nTp_n\in\IM(n,\IR)$, we get $\lfloor C\ve^{-2}\rfloor$-width $S_n\in\IM(n,\IR)$ such that $\norm{p_nTp_n-S_n}\leq\ve$. 
    Since $\norm{S_n}\leq\ve+\norm{T}$, we choose a WOT-accumulation point $S\in\IB(\ell_2)$ of $S_n\in\IB(\ell_2)$.
    It follows that $\Width S\leq C\ve^{-2}$ and $\norm{S-T}\leq\ve$.
    Since $\Roe\Univ$ is norm closed, we obtain $\IB_T\subset\Roe\Univ$.
\end{proof}

The Birkhoff--von Neumann theorem asserts that every doubly stochastic matrix of finite size is a convex combination of permutation matrices.
Birkhoff's Problem 111 is to extend this to the case of infinite size under a suitable topology.
See \cite{Isb62,Ken60,Gou24}.

\begin{thm}[Birkhoff's Problem 111 in the operator norm]
    There is a universal constant $C$ with the following property:
    For every $n\in\IN$, every doubly stochastic $T\in\IM(n,\IR)$, and every $\ve\in(0,1)$, there exist $m\leq C\ve^{-2}$ and permutation matrices $U_1,\dots, U_m$ such that $\norm{T-\frac1m\sum_iU_i}_{\op}\leq\ve$.
    The same holds true for a doubly substochastic infinite matrix $T$.
\end{thm}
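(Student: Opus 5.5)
The plan is to obtain the statement from Proposition~\ref{AI1}, more precisely from the rounded matrix $S$ built in its proof, together with the finite Birkhoff--von Neumann theorem and König's edge-colouring theorem.

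\textbf{Finite case.} Let $T$ be doubly stochastic of size $n$. Run the proof of Proposition~\ref{AI1} with a parameter $t$ to be fixed later. It gives a matrix $S$ with the following properties:
\begin{itemize}
\item $tS$ has entries in $\IN$;
\item $\norm{T-S}\leq 8t^{-1/2}$;
\item the row- and column-sum defects satisfy $\norm{D},\norm{D'}\leq 8t^{-1/2}$.
\end{itemize}
Since $T$ is doubly stochastic, the row and column sums of $S$ all lie in $[1-8t^{-1/2},1+8t^{-1/2}]$. Hence $B\coloneqq tS$ is the biadjacency matrix of a bipartite multigraph, with all degrees in $[t-8\sqrt t,\,t+8\sqrt t]$.

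Set $m\coloneqq\lceil t+8\sqrt t\rceil$. Pad $B$ to an $m$-regular bipartite multigraph $B+R$, where $R\geq0$ is integral with row and column sums at most $16\sqrt t+1$. Such an $R$ exists: the row deficits and the column deficits have the same total, so one can fill greedily, as in the standard proof that a doubly substochastic matrix is dominated by a doubly stochastic one. By König's theorem, $B+R$ is a sum of $m$ permutation matrices $U_1,\dots,U_m$. Therefore
\[\norm[\Big]{T-\frac1m\sum_iU_i}\leq\norm{T-S}+\norm[\Big]{S-\frac{B}{m}}+\frac{\norm{R}}{m}.\]
We bound the three terms:
\begin{itemize}
\item $\norm{T-S}\leq 8t^{-1/2}$;
\item $\norm{S-B/m}=\abs{1/t-1/m}\,\norm{B}\leq O(t^{-1/2})$;
\item $\norm{R}/m\leq O(t^{-1/2})$, by the Schur test, since $R\geq0$ has row and column sums $O(\sqrt t)$.
\end{itemize}
Choosing $t\asymp\ve^{-2}$ makes the total at most $\ve$, with $m\leq C\ve^{-2}$.

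\textbf{Infinite case.} Now let $T$ be doubly substochastic on $\ell_2$. I would first dilate. One option is $\hat T=\smallmat{T&1-D_r\\1-D_c&T^*}$, where $D_r,D_c$ are the diagonal matrices of row and column sums. This is an infinite doubly stochastic matrix, and $T$ is its corner.

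Apply the finite case to the truncations $p_n\hat Tp_n$. These are only substochastic, so repeat the padding argument: the padding $R$ may use new indices, i.e.\ permutations of a larger finite set. Compressing back to the corner gives partial permutations, and compressions of genuine permutations of $\ell_2$ are exactly what we get after the limiting step. The result is permutation matrices $U^{(n)}_1,\dots,U^{(n)}_m$ on a finite superset with $\norm{p_n\hat Tp_n-\frac1m\sum_i p_nU^{(n)}_ip_n}\leq\ve$, where $m$ is independent of $n$.

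Because $m$ is fixed, a diagonal argument gives entrywise (WOT) convergent subsequences $U^{(n)}_i\to U_i$. Each entrywise limit of permutation matrices is a partial isometry given by a partial injection. Lower semicontinuity of the norm under WOT then gives $\norm{T-\frac1m\sum_iU_i'}\leq\ve$, where $U_i'$ is the corner of $U_i$.

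\textbf{Main obstacle.} The hard step is upgrading these partial permutations to honest permutation operators of $\ell_2$ in the substochastic infinite case. I would handle it by using that $\ell_2$ is infinite-dimensional: reserve an infinite sparse set of indices where $T$ is negligible, so that each partial injection can be completed to a bijection there, at a cost of $O(\ve)$ in norm. The cost estimate again uses the Schur test on the completing pieces. The point to be careful about is that the completions of the $U_i$ remain within $O(\ve)$ after averaging.
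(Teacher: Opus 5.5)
Your finite case is the paper's argument: rounding as in Proposition~\ref{AI1}, padding to an $m$-regular integral matrix, the Schur test, and Hall/K\"onig.

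The infinite case has a genuine gap, at exactly the step you call the main obstacle. You decompose into permutations \emph{before} taking the WOT limit and before compressing $\hat T$ to its corner. As a result you only obtain partial injections $U_i'$, and nothing in the proposal turns them into permutations of $\IN$. Adding entries on the missing rows and columns does not work in general. The unilateral shift $k\mapsto k+1$ is the entrywise limit of the cycles $1\to2\to\dots\to n\to1$ and is itself doubly substochastic. It has no missing column and one missing row, so existing entries must be rerouted. Your proposed reservoir, an infinite sparse set of indices on which $T$ is negligible, need not exist: for the unilateral shift, or for any permutation, every column of $T$ has norm $1$. So the claimed $O(\ve)$ cost of the completion is unsupported. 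The dilation $\hat T$ does not help either, since corners of permutations of $\ell_2\oplus\ell_2$ are again only partial permutations.

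The paper avoids this by taking the compactness limit at the level of the rounded matrix and then padding directly on the infinite index set. The properties of $S$ you need survive entrywise limits: $tS$ integral, $\norm{T-S}\le 8t^{-1/2}$, and row and column sums at most $1+8t^{-1/2}$. Take $m=\lceil t+8\sqrt t\rceil+2$. Then the deficits $\mu=(m-1)\Iin-tS\Iin$ and $\nu=(m-1)\Iin-tS^*\Iin$ are integers in $[1,m-1]$. Because the index set is infinite, you can realize them by $0$--$1$ matrices $B_1,B_2$ with $B_1\Iin=\mu$, $B_2\Iin=\nu$ and all column sums equal to $1$. The Schur test gives $\norm{B_1},\norm{B_2}\le\sqrt m$. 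So the padding $B_1+B_2^*$ costs only $O(t^{-1/2})$ after dividing by $m$, even though the deficits may be of size $m$. This is where infinite dimension is used: in $\IM_n$ the zero matrix is at distance $1$ from every average of permutations. Now $tS+B_1+B_2^*$ is exactly $m$-regular. K\"onig's theorem for bounded-degree bipartite multigraphs then gives $m$ perfect matchings of $\IN$ with $\IN$, i.e.\ genuine permutations, and partial injections never appear.

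Your route can also be repaired, but with a different reservoir. Choose infinite column sets $R_1,\dots,R_m$ and row sets $Q_1,\dots,Q_m$, pairwise disjoint across $i$ and chosen independently of $T$. Delete the entries of $U_i'$ lying in a column of $R_i$ or a row of $Q_i$. Then complete by a bijection between the resulting defect sets all of whose entries lie in columns of $R_i$ or rows of $Q_i$; this is possible since both defect sets contain the infinite sets $R_i$, $Q_i$. Each reserved row and column is touched by only one $i$. After averaging, the deletions and the additions therefore split into pieces with one Schur sum at most $1/m$ and the other at most $1$. Each piece has norm at most $m^{-1/2}=O(\ve)$.
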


\begin{proof}
    Let $T\in\IM(n,\IR)$ be a doubly stochastic matrix.
    The same argument as in Proposition~\ref{AI1} shows that there exists $S\in\IM(n,\IR)$ satisfying the following for $t>0$:
    \[\norm{T-S},\sup_i\abs*{\sum_j(T_{i,j}-S_{i,j})},\sup_j\abs*{\sum_i(T_{i,j}-S_{i,j})}\leq8t^{-\frac12}\quad\text{and }tS_{i,j}\in\IN.\]
    Set $m\coloneqq\lceil t+8\sqrt{t}\rceil$.
    Since $tS\in\IM(n,\IN)$ satisfies $\sum_jtS_{i,j},\sum_itS_{i,j}\in[t-8\sqrt{t},t+8\sqrt{t}]$, 
    there exists $B\in\IM(n,\IN)$ such that $S'\coloneqq m^{-1}(tS+B)$ is doubly stochastic.
    Indeed, for every $\mu,\nu\in\IN^n$ with $\inpro{\mu,\Iin}=\inpro{\nu,\Iin}$, there is $B\in\IM(n,\IN)$ such that $B\Iin=\mu,B^*\Iin=\nu$.
    By the Schur test, one has $\norm{B}_{\op}\leq16\sqrt{t}+1$ and
    \[\norm{T-S'}\leq\frac{t}{m}8t^{-\frac12}+\paren*{1-\frac{t}{m}}\norm{T}+m^{-1}(16\sqrt{t}+1)=O(t^{-\frac12}).\]
    Viewing $mS'$ as the $m$-regular bipartite multigraph on $[n]\sqcup[n]$, we apply the Hall marriage theorem to it repeatedly.
    Then, we obtain that $mS'$ is sum of permutations $U_1,\dots,U_m$ and the first assertion by choosing $t=O(\ve^{-2})$.
    
    Let $T\in\IB(\ell_2)$ be doubly substochastic.
    For $t>0$, the same argument as in Proposition~\ref{AI1} with a compactness argument shows that there exists $S\in\IB(\ell_2)$ with
    \[\norm{T-S}\leq8t^{-\frac12},\ \sup_i\sum_jS_{i,j},\sup_j\sum_iS_{i,j}\leq 1+8t^{-\frac12}\quad\text{and }tS_{i,j}\in\IN.\]
    We argue as for the first assertion but the fact $S\Iin$ and $S^*\Iin$ are not needed to be close to $\Iin$.
    We claim that there exists $B\in\IB(\ell_2)$ with nonnegative integer entries such that $\norm{B}_{\op}\leq O(\sqrt{t})$ and $S'\coloneqq m^{-1}(tS+B)$ is doubly stochastic for $m\coloneqq\lceil t+8\sqrt{t}\rceil+2$.
    Once we have proved the claim, we obtain our theorem by an $m$-edge-coloring of the $m$-regular bipartite multigraph $mS'$.
    To show the claim, it suffices to show that,
    for every $\mu\in(\IN_{\geq1})^\infty$ with $\sup_n\mu(n)<\infty$, there is $B_1\in\IB(\ell_2)$ with nonnegative integer entries such that $B_1\Iin=\mu,B_1^*\Iin=\Iin$, which is trivial.
    Indeed, applying this to $\mu\coloneqq (m-1)\Iin-tS\Iin,\nu\coloneqq(m-1)\Iin-tS^*\Iin$,
    we obtain $B_1,B_2$ such that $B_1\Iin=\mu,B_1^*\Iin=\Iin$ and $B_2\Iin=\nu,B_2^*\Iin=\Iin$.
    By the Schur test, one has $\norm{B_i}_{\op}\leq\sqrt{\norm{B_i\Iin}_\infty\norm{B_i^*\Iin}_\infty}=O(\sqrt{t})$ for $i=1,2$.
    $B\coloneqq B_1+B_2^*$ does the job.
\end{proof}

\subsection{Proof of $\Roe\Univ\subsetneq\Ql\Univ$}

Since the inclusion is clear, we show that this inclusion is strict. 
To show this, we decompose $\IN=\bigsqcup X_n$ and $\ell_2=\bigoplus\ell_2(X_n)$ with $\abs{X_n}\geq n$ and consider whether $\bigoplus p_n\in\prod\IM_{\abs{X_n}}$ belongs to $\Roe\Univ,\Ql\Univ$ or not for the Haar random projection $p_n\in\IM_{\abs{X_n}}$ with fixed rank $r_n\leq \frac{\abs{X_n}}{2}$.

Before the lemma, we comment on the $\ve$-covering number, that is, the minimal number of $\ve$-balls covering the space.
For the unit balls of $d$-dimensional normed spaces, this quantity is greater than $\ve^{-d}$ and less than $(1+\frac2\ve)^d$ \cite[Cor. 4.2.11]{Ver18}.

\begin{lem}\label{Ozawa}
    If $r_n=o(\abs{X_n})$, then $\bigoplus p_n\in\Ql\Univ$ almost surely.
\end{lem}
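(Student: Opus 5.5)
The plan is to build $\Sigma$ blockwise from expander graphs on the $X_n$, and to show that a Haar random projection of rank $o(\abs{X_n})$ carries almost no mass on small coordinate sets. Write $N=\abs{X_n}$ and $p_n=V_nV_n^*$ with $V_n\colon\ell_2^{r_n}\to\ell_2(X_n)$ a Haar isometry. Two observations drive the argument.

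First, since $p_n$ is a projection, $\norm{\Iin_Ap_n\Iin_B}\le\min(\norm{\Iin_AV_n},\norm{\Iin_BV_n})$. So it suffices that $\norm{\Iin_AV_n}\le\ve$ whenever $A$ is \emph{small}.

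Second, $\Sigma$ will force one of $A,B$ to be small. Fix an even $d$ depending on $\ve$. Let $\Sigma_n$ be the edge set (with diagonal) of a $d$-regular graph $G_n$ on $X_n$ whose second eigenvalue is $O(\sqrt d)$, such as a random regular graph. The expander mixing lemma gives $e(A,B)\ge d\abs A\abs B/N-O(\sqrt d)\sqrt{\abs A\abs B}$. Hence $(A\times B)\cap\Sigma_n=\emptyset$ forces $\min(\abs A,\abs B)\le\sqrt{\abs A\abs B}\le cN$ with $c=O(d^{-1/2})$.

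Set $\Sigma=\bigsqcup_n\Sigma_n$, enlarged to all of $X_n\times X_n$ for the finitely many exceptional $n$ below; this has bounded width. For general $A,B\subset\IN$, $\Iin_A(\bigoplus p_n)\Iin_B$ is block diagonal, so its norm is the supremum over blocks. This reduces the lemma to the following estimate.

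\emph{Main estimate.} Given $\ve>0$, choose $c>0$ with $c\log(e/c)\ll\ve^2$. Then, with probability at least $1-\exp(-\Omega_\ve(N))$,
\[\sup\bigl\{\norm{\Iin_AV_n}\setcolon A\subset X_n,\ \abs A\le cN\bigr\}\le\ve.\]

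To prove it, replace $V_n$ by a Gaussian $N\times r_n$ matrix $G$ with entries of variance $\frac1N$, via the polar decomposition as in the reduction to Proposition~\ref{Prop}. Since $r_n=o(N)$, the smallest singular value of $G$ is at least $\frac12$ with probability $1-e^{-\Omega(N)}$ by \cite[Thm. 4.6.1]{Ver18}. Hence $\norm{\Iin_AV_n}\le2\norm{\Iin_AG}$.

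Now fix $A$ with $\abs A\le cN$ and a unit vector $y$. Then $N\norm{\Iin_AGy}^2$ is $\chi^2$ with $\abs A\le cN$ degrees of freedom. A Chernoff bound gives $\IP[\norm{\Iin_AGy}^2\ge\ve^2/16]\le\exp(-\Omega(\ve^2N))$, provided $c\ll\ve^2$.

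Take a union bound over a $\frac12$-net of $S^{r_n-1}$, of size $5^{r_n}=e^{o(N)}$, and over all $A$, of which there are $\binom{N}{cN}\le e^{Nc\log(e/c)}$. The total failure probability is $\exp(-N(\Omega(\ve^2)-c\log(e/c)-o(1)))=e^{-\Omega_\ve(N)}$.

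This union bound is the main obstacle. The entropy of the family of sets $A$ must be beaten by the $\chi^2$ tail, which is exactly why $c$, and hence $d$, must depend on $\ve$. It is also why $r_n=o(N)$ is needed: the net over $y$ must contribute only $e^{o(N)}$.

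Since $N=\abs{X_n}\ge n$, the failure probabilities are summable in $n$. By Borel--Cantelli, almost surely the main estimate holds for all $n\ge n_0(\ve)$. Putting the full blocks $X_n\times X_n$ for $n<n_0$ into $\Sigma$ keeps the width finite, so $\bigoplus p_n$ is $\ve$-$d'$-quasi-local for some $d'$. Intersecting these almost-sure events over $\ve=2^{-m}$ gives $\bigoplus p_n\in\Ql\Univ$ almost surely.
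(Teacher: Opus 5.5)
Your proof is correct and is essentially the paper's argument. Expanders on each $X_n$ force one of $A\cap X_n$, $B\cap X_n$ to be a small fraction of $X_n$. A union bound over a $\frac12$-net of the range (size $e^{o(N)}$ since $r_n=o(N)$) and over all small subsets (entropy $\approx c\log(1/c)\ll\ve^2$), against a tail of order $e^{-\Omega(\ve^2N)}$, followed by Borel--Cantelli and absorbing the initial blocks into $\Sigma$, then gives the result.

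The only differences are cosmetic. You use one $d(\ve)$-regular near-Ramanujan graph with the expander mixing lemma, where the paper takes $R$-neighbourhoods in a fixed expander via Cheeger growth. You get the tail from a Gaussian $\chi^2$ bound via polar decomposition rather than from concentration on the sphere for $U\xi$. Over $\IC$ one simply uses complex Gaussians and a net of $S^{2r_n-1}$.
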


\begin{proof}
    This is essentially proved in \cite{Oza25}, so we only review the proof. 
    Fix expanders on $X_n$ with the Cheeger constant $\geq h>0$.
    Since $\min(\abs{A},\abs{B})/N\leq(1+h)^{-R/2}$ whenever $A,B\subset X_n$ satisfy $\operatorname{dist}(A,B)\geq R$, it suffices to show that for every $\ve>0$ there exists $\delta>0$ such that for every $E\subset X_n$ with $\abs{E}\leq\delta\abs{X_n}$ one has $\norm{\Iin_E p_n}\leq\ve$ with high probability.

    First, we have 
    \[\IP\left[x\in (\IC^N)_{=1}=S^{2N-1}\setcolon\norm{\Iin_Ex}\geq\sqrt{\frac{\abs{E}}{N}}+\ve\right]\leq\exp\paren*{-\frac{\ve^2}{2}\times2N},\quad\text{where } N=\abs{X_n}\]
    for $E\subset X_n$ by the Chernoff bound using the Gaussian, or a similar estimate by the measure concentration.
    %
    %
    Let $F\subset S^{2r_n-1}$ be a $\frac12$-dense subset with $\abs{F}\leq5^{2r_n}$ and we have $\norm{T}\leq2\max_{\xi\in F}\norm{T\xi}$ for any $T\in\IM(N\times r_n)$ \cite[Lem. 4.4.1]{Ver18}.
    %
    %
    Since $\abs{\set{E\subset X_n\setcolon\abs{E}\leq\delta N}}\leq\exp(H(\delta)N)$ with $H(\delta)\coloneqq-\delta\log\delta-(1-\delta)\log(1-\delta)$, for Haar random $U\in\Isom(N\times r_n,\IC)$, the union bound shows that
    \[\IP\left[U\in\Isom\setcolon\max_{\abs{E}\leq\delta N}\norm{\Iin_EU}\geq2(\sqrt{\delta}+\ve)\right]\leq e^{H(\delta)N}\times5^{2r_n}\times e^{-\ve^2N}.\]
    If we choose $\delta$ so that $H(\delta)<\ve^2$ and $r_n=o(N)$, this tail probability decays exponentially in $N$.
    The Borel--Cantelli lemma shows that there exists $n_0$ such that for every $n\geq n_0$ one has
    $\norm{\Iin_Ep_n}\leq2(\sqrt{\delta}+\ve)$ whenever $E\subset X_n$ is $\abs{E}\leq\delta\abs{X_n}$ almost surely.
    As in the proof of Theorem A, by replacing $R$ with $\max(R,\max_{n<n_0}\abs{X_n})$ and taking a countable intersection for $\ve$, we obtain $\bigoplus p_n\in\Ql\Univ$.
\end{proof}

\begin{lem}
    If $\frac{r_n}{\log\abs{X_n}}\to\infty$, then $\bigoplus p_n\notin\Roe\Univ$ almost surely.
\end{lem}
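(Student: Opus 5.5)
We plan to prove the statement by a union bound over the blocks: we compare the number of ``configurations'' of $d$-width operators with the small-ball probability of a Haar random projection of rank $r_n$, and then apply Borel--Cantelli. Write $N=\abs{X_n}$ and $q_n$ for the projection onto $\ell_2(X_n)$. If $\bigoplus p_n$ is $\ve$-$d$-width, then compressing by $q_n$ keeps both the width bound and the norm bound. Hence every block $p_n$ is $\ve$-$d$-width in $\IM_N$. It therefore suffices to find a fixed absolute $\ve_0>0$ such that, for every $d\in\IN$,
\[\sum_n\IP[p_n\text{ is $\ve_0$-$d$-width}]<\infty.\]
Borel--Cantelli, together with a countable intersection over $d$, then shows that almost surely $\bigoplus p_n$ is not $\ve_0$-$d$-width for any $d$. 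Thus $\bigoplus p_n\notin\Roe\Univ$.

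\emph{Step 1: covering the $d$-width operators.} A bipartite graph on $X_n\sqcup X_n$ with maximal degree $\leq d$ is a union of $d$ partial matchings (K\"onig's edge-colouring theorem). Hence the number of possible supports of $d$-width matrices is at most $((N+1)!)^d\leq\exp(2dN\log N)$. For a fixed support $\Sigma$, the $d$-width matrices $T'$ with $\supp T'\subset\Sigma$ and $\norm{T'}\leq2$ form the ball of a normed space of real dimension $\leq2dN$. By \cite[Cor. 4.2.11]{Ver18} this ball is covered by $(1+\frac4{\ve_0})^{2dN}$ balls of radius $\ve_0$. If $p_n$ is $\ve_0$-$d$-width, the approximant has norm $\leq2$, so $p_n$ lies within $2\ve_0$ of one of at most $\exp(2dN\log N+2dN\log(5/\ve_0))$ fixed matrices $S_k$. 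In each such ball we may pick a rank-$r_n$ projection $e_k$ with $\norm{p_n-e_k}\leq4\ve_0$.

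\emph{Step 2: small-ball estimate on the Grassmannian.} This is the main obstacle. We aim to show that for a fixed rank-$r$ projection $e$ with $r\leq N/2$ and a small absolute $\eta$,
\[\IP[\norm{p-e}\leq\eta]\leq(C\eta)^{rN/2}+e^{-cN}.\]
We realize $p$ as the range projection of a complex Gaussian $G\in\IM(N\times r)$ and split $G$ into $eG$ and $(1-e)G$. With probability $\geq1-e^{-cN}$ we have $\norm{eG}\leq C\sqrt N$ \cite[Thm. 4.6.1]{Ver18}. On the event $\norm{p-e}\leq\eta$, every vector in the range of $G$ has its $(1-e)$-component at most $\eta$ times its norm. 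This gives $\norm{(1-e)G}\leq2\eta\norm{G}\leq C'\eta\sqrt N$, and hence $\norm{(1-e)G}_{2}\leq C'\eta\sqrt{Nr}$ in Hilbert--Schmidt norm. The matrix $(1-e)G$ is a standard Gaussian in real dimension $k=2r(N-r)\geq rN$. The probability that it lies in the Euclidean ball of radius $\rho$ is at most $(e\rho^2/k)^{k/2}$. With $\rho^2=C'^2\eta^2Nr$ this is at most $(C''\eta)^{rN/2}$. (The dependence between $eG$ and $(1-e)G$ is harmless, since these blocks are independent.)

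\emph{Step 3: conclusion.} Combining Steps 1 and 2 with $\eta=4\ve_0$ gives
\[\IP[p_n\text{ is $\ve_0$-$d$-width}]\leq\exp\paren[\big]{2dN\log N+2dN\log(5/\ve_0)}\paren[\big]{(4C''\ve_0)^{r_nN/2}+e^{-cN}}.\]
The $e^{-cN}$ term needs care: the event $\norm{eG}\leq C\sqrt N$ should be imposed once, and not once per ball. So we intersect with this single event globally before the union bound, and that term is then counted only once. We fix $\ve_0$ so that $4C''\ve_0\leq e^{-1}$. Since $r_n/\log N\to\infty$, we have $r_nN/2\gg 2dN\log N+2dN\log(5/\ve_0)$, so the main term is at most $\exp(-r_nN/4)$ for large $n$. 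The leftover $e^{-cN}$ is summable once $N=\abs{X_n}$ grows at least logarithmically in $n$; this holds because $\abs{X_n}\geq2r_n$ and $r_n\gg\log\abs{X_n}$ forces $\abs{X_n}\to\infty$, after passing to a sparse subsequence if necessary. This gives the required summability and finishes the proof.
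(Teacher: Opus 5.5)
Your proof is correct, and its skeleton matches the paper's. You compress to blocks, cover the $\ve$-$d$-width contractions by $\exp(O(dN\log N))$ balls of radius $2\ve$, bound the Haar measure of a small ball in $\Grass(N,r_n)$, and finish with a union bound and Borel--Cantelli. The genuine difference is how you prove the key small-ball estimate.

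The paper proves $\IP[\Ball(a,2\ve)\cap\Grass(N,r)]\leq(16\ve)^{r(N-r)}$ deterministically. By rotation invariance, all $4\ve$-balls in the Grassmannian have equal measure. Pulling back a maximal $8\ve$-separated subset of $\IM(N-r,r)_{\leq 1/2}$ through the contractive corner map $\Phi$ gives $(16\ve)^{-r(N-r)}$ pairwise disjoint such balls. Here $\Phi$ is onto the $\frac12$-ball by the two-projection picture.

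You instead realize $p$ as the range of a Gaussian $G$. You turn $\norm{p-e}\leq\eta$ into Hilbert--Schmidt smallness of the Gaussian block $(1-e)G$, then apply a volumetric small-ball bound. This needs only Gaussian norm concentration and volume, not the two-projection structure or the homogeneity argument. The price is an exceptional event of probability $e^{-cN}$, which must be kept out of the union bound; the paper's bound has no additive term.

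A few small repairs are needed.
\begin{itemize}
\item The number of partial matchings in $K_{N,N}$ exceeds $(N+1)!$ already for $N=2$ ($7>6$). Use $(N+1)^N$ instead; your final $\exp(2dN\log N)$ still holds for $N\geq2$.
\item The single global event must be $\set{\norm{G}\leq C\sqrt N}$, not $\set{\norm{eG}\leq C\sqrt N}$, because $e=e_k$ changes from ball to ball. Since $\norm{e_kG}\leq\norm{G}$ and $\norm{(1-e_k)G}\leq\eta\norm{G}$ on $\set{\norm{p-e_k}\leq\eta}$, this one event serves every $k$ at once.
\item Summability is automatic in the paper's setting, which assumes $\abs{X_n}\geq n$. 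In fact you do not need it. For each fixed $d$ and every $m$, the event that $\bigoplus p_n$ is $\ve_0$-$d$-width lies inside the event that $p_m$ is. So its probability is at most $\inf_m\IP[p_m\text{ is }\ve_0\text{-}d\text{-width}]=0$, and your sparse-subsequence device is unnecessary.
\end{itemize}
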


\begin{proof}
    Let $d>0$ and $\ve\in(0,\frac1{16})$.
    We show that $\sum_n\IP[p_n\text{ is $\ve$-$d$-width}]$ is finite.
    The key is that the $2\ve$-covering number of 
    \[E\coloneqq\set{T\in\IM_{\abs{X_n}}\setcolon\norm{T}\leq1,T\text{ is $\ve$-$d$-width}}\]
    is sufficiently smaller than that of the space of projections with rank $r_n$, which is denoted by $\Grass(\abs{X_n},r_n)$.
    For $\Sigma\subset X_n\times X_n$ with $\Width\Sigma\leq d$, the $(1+\ve)$-ball of the normed space $\set{T\in\IM_{\abs{X_n}}\mid\supp T\subset\Sigma}$ has the $\ve$-covering number at most $(1+\frac{2(1+\ve)}\ve)^{2\abs{\Sigma}}$.
    Since $E$ is covered by the union of $\ve$-neighborhoods of these $(1+\ve)$-balls and there are at most $(\abs{X_n}^d)^{\abs{X_n}}$ many such $\Sigma$, the $2\ve$-covering number of $E$ is at most
    \[M\coloneqq\paren*{1+\frac{2(1+\ve)}\ve}^{2d\abs{X_n}}\times\abs{X_n}^{d\abs{X_n}}.\]
    Thus, one has $\IP[E\cap\Grass(\abs{X_n},r_n)]\leq M\sup_{a\in\IM_{\abs{X_n}}}\IP[\Ball(a,2\ve)\cap\Grass(\abs{X_n},r_n)]$.

    We claim that 
    \[\IP[\Ball(a,2\ve)\cap\Grass(\abs{X_n},r_n)]\leq(16\ve)^{r_n(\abs{X_n}-r_n)}\]
    for $a\in\IM_{\abs{X_n}}$.
    We may assume that $\Ball(a,2\ve)\cap\Grass(\abs{X_n},r_n)\neq\emptyset$, and choose $p$ from the set.
    Note that $\Ball(a,2\ve)\cap\Grass(\abs{X_n},r_n)\subset\Ball(p,4\ve)\cap\Grass(\abs{X_n},r_n)$ and the measure of the latter set does not depend on $p$ by the rotation invariance of the Haar measure on $\Grass$.
    To prove the claim, it suffices to construct an $8\ve$-sparse subset $F'\subset\Grass(\abs{X_n},r_n)$ with $\abs{F'}\geq(16\ve)^{-r_n(\abs{X_n}-r_n)}$, that is, $d(x,y)\geq8\ve$ whenever $x,y\in F'$ are distinct.
    Define the contraction $\Phi:\Grass(\abs{X_n},r_n)\subset\IM_{\abs{X_n}}\to\IM(\abs{X_n}-r_n,r_n)$, given by the block $(2,1)$-entry.
    The image of $\Phi$ is $\IM(\abs{X_n}-r_n,r_n)_{\leq\frac12}$.
    Indeed, any pair of projections is unitary equivalent to the direct sum of 
    $\paren[\big]{\smallmat{1&0\\0&0},\smallmat{\cos\theta&\sin\theta}\smallmat{\cos\theta\\ \sin\theta}}$
    and $(0\text{or}1,0\text{or}1)$, so the image is the set of all matrices with the singular values $s_i=\sin\theta_i\cos\theta_i$.
    Fix a maximal $8\ve$-sparse subset $F\subset\IM(\abs{X_n}-r_n,r_n)_{\leq\frac12}$ and pullback to get $F'\subset\Grass(\abs{X_n},r_n)$ such that $\Phi(F')=F$ and $\abs{F'}=\abs{F}$.
    Thus, we have $\abs{F'}=\abs{F}\geq(16\ve)^{-r_n(\abs{X_n}-r_n)}$ by maximality, which proves the claim.
    One has
    \begin{align*}
        &\frac1{\abs{X_n}}\log\IP[p_n\text{ is $\ve$-$d$-width}]\\
        \leq&\log(16\ve)r_n\frac{\abs{X_n}-r_n}{\abs{X_n}}+d\log\abs{X_n}+\log\paren*{1+\frac{2(1+\ve)}{\ve}}2d\\
        \leq&-C_1r_n+C_2\log\abs{X_n}+C_3,\quad\text{where }C_1,C_2,C_3>0\\
        \to&-\infty\quad\text{as}\quad n\to\infty.
    \end{align*}
    In particular, $\frac1{\abs{X_n}}\log\IP[p_n\text{ is $\ve$-$d$-width}]\leq -c$ for some $c>0$ and sufficiently large $n$, so the Borel--Cantelli lemma shows that there exists $n_0$ such that $p_n$ is not $\ve$-$d$-width for every $n\geq n_0$ almost surely.
    Therefore, $\bigoplus p_n$ is not $\ve$-$d$-width almost surely, and by taking a countable intersection for $d$ (fix $\ve$), we obtain $\bigoplus p_n\notin\Roe\Univ$.
\end{proof}

Therefore, Haar random projections with rank $\log\abs{X_n}=o(r_n),r_n=o(\abs{X_n})$ distinguish $\Roe\Univ,\Ql\Univ$. 

\subsection{Proof of $\Ql\Univ\subsetneq Q$}\label{tensorJ}

\begin{proof}
    The inclusion is clear, and we show that the inclusion is strict.
    Fix a sequence of positive integers $\{m_n\}$ such that $m_n=o(n),m_n\to\infty$ and $m_n\geq1$.
    Set $X_n=[n]\times[m_n]$ and $p_n\coloneqq\frac1{m_n}\Iin\otimes\Iin^*$, where $\Iin$ is the constant vector $\Iin=(1,\dots,1)^T$.
    Choose a unitary $U_n\in\IM_n$ such that $\abs{U_n(i,j)}\leq O(\frac{1}{\sqrt{n}})$ (e.g., the discrete Fourier transform).
    To avoid confusion about indices, we only use ``$i,j$'' for elements in $[n]$ and ``$k,l$'' for elements in $[m_n]$.
    Hence, $(i,k),(j,l)\in X_n$ represent indices of matrices in $\IM_{\abs{X_n}}$

    First, we claim that $\bigoplus(U_n\otimes p_n)\in\prod\IM_{\abs{X_n}}$ belongs to $Q$.
    Indeed, for fixed $j,l$, 
    \[\sum_{(i,k)}\abs{(U_n\otimes p_n)_{(i,k),(j,l)}}^2=\sum_{(i,k)}\abs{U_n(i,j)}^2(m_n)^{-2}=\frac{\norm{U_n\delta_j}^2}{m_n}\leq\frac{\norm{U_n}^2}{m_n}\to0\]
    holds.
    Note that this convergence is uniform in $j,k$ and we have the same estimate for rows of $\bigoplus(U_n\otimes p_n)$.
    Fix $\ve>0$ and choose $n_0$ so that $m_n^{-\frac12}\leq\ve$ for $n\geq n_0$.
    On the tail blocks, every column and row is $\ve$-close to 0, while the finitely many initial blocks can be approximated by some finite-support vectors.

    Second, we claim that $\bigoplus(U_n\otimes p_n)\in\prod\IM_{\abs{X_n}}$ is not in $\Ql\Univ$.
    Suppose not, and fix $\Sigma\subset \IN\times \IN,\ve\in(0,1)$ such that $\Width\Sigma<\infty$ and $\norm{\Iin_A(U_n\otimes p_n)\Iin_B}\leq\ve$ whenever $A,B\subset X_n$ satisfy $(A\times B)\cap\Sigma=\emptyset$.
    Fix $j\in[n]$ and set $B\coloneqq\{j\}\times[m_n],$
    \[I\coloneqq\set{i\in[n]\setcolon(\{i\times[m_n])\times(j\times[m_n])\cap\Sigma=\emptyset}\]
    and $A\coloneqq I\times[m_n]$.
    Note that $\abs{I^c}\leq m_n\Width\Sigma$.
    Since $\Iin_A=\Iin_I\otimes\Iin,\Iin_B=\Iin_{\{j\}}\otimes\Iin$, one has
    \[\norm{\Iin_A(U_n\otimes p_n)\Iin_B}=\norm{(\Iin_IU_n\Iin_{j})\otimes p_n}=\sqrt{\sum_{i\in I}\abs{U_n(i,j)}^2}\geq\sqrt{1-m_nO\left(\frac1{n}\right)}\to1.\]
    This contradicts quasi-locality.
\end{proof}

\subsection{Proof of $M(\II)\subset Q$}

The following lemma states that every vector has either an essentially small support or many large level sets.

\begin{lem}
    Let $\ve>0,n\in\IN$ and $\xi\in\ell_2(\IN)$.
    Suppose that $\norm{\xi}\leq1$ and $\norm{\Iin_{F^c}\xi}\geq\ve$ for any $F\subset\IN$ with $\abs{F}\leq64\ve^{-4}n^2$.
    Then, there exist an index set $I$ and disjoint finite subsets $F_k\subset\IN,\ k\in I$ satisfying the following:
    \begin{itemize}
        \item $\abs{F_k}\geq n$,
        \item $\max_{x\in F_k}\abs{\xi(x)}\leq2\min_{x\in F_k}\abs{\xi(x)}\leq\frac{\ve^2}{2n}$,
        \item $\sum_{k\in I}\norm{\Iin_{F_k}\xi}^2\geq\frac{\ve^2}{2}$.
    \end{itemize}
\end{lem}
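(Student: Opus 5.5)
We decompose $\xi$ into dyadic level sets and discard the pieces that are too large or too small. For $m\in\mathbb{Z}$ set
\[L_m\coloneqq\set{x\in\IN\setcolon 2^{-m-1}<\abs{\xi(x)}\leq 2^{-m}}.\]
Since $\norm{\xi}\leq1$, each $L_m$ is finite, with $\abs{L_m}\leq 4^{m+1}$. On each $L_m$ the ratio of the largest to the smallest entry of $\abs{\xi}$ is at most $2$, so the first part of the second bullet holds automatically. The plan is to take the $F_k$ to be the level sets $L_m$ that are \emph{good}: those with $\abs{L_m}\geq n$ and $2^{-m}\leq\frac{\ve^2}{2n}$. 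It remains to show that the good level sets carry $\ell_2$-mass at least $\ve^2/2$.

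We split the bad level sets into two groups.

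\emph{High levels.} These are the $L_m$ with $2^{-m}>\frac{\ve^2}{2n}$. Their union $H=\set{x\setcolon\abs{\xi(x)}>\frac{\ve^2}{4n}}$ (enlarged slightly by the dyadic rounding) satisfies $\abs{H}\leq\norm{\xi}^2\cdot\paren{\frac{4n}{\ve^2}}^2=16\ve^{-4}n^2$ by Chebyshev.

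\emph{Small levels.} These are the $L_m$ with $\abs{L_m}<n$ that are not high. Their union $S$ might be infinite, so we do not bound its cardinality. Instead we bound its mass: each such level carries mass at most $n4^{-m}$ with $2^{-m}\leq\frac{\ve^2}{2n}$. Summing the geometric series over $m$ from the threshold upward gives
\[\norm{\Iin_S\xi}^2\leq n\sum_{2^{-m}\leq\ve^2/(2n)}4^{-m}\leq\frac43\,n\paren*{\frac{\ve^2}{2n}}^2=\frac{\ve^4}{3n}\leq\frac{\ve^2}{2}.\]
The last step uses $\ve<1$, which we may assume since otherwise the hypothesis fails.

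The remaining step is where the slack in the constant $64\ve^{-4}n^2$ is spent. Apply the hypothesis with $F=H$, which is allowed since $\abs{H}\leq 16\ve^{-4}n^2\leq 64\ve^{-4}n^2$; this gives $\norm{\Iin_{H^c}\xi}^2\geq\ve^2$. Subtracting the mass of the small levels,
\[\sum_{\text{good }m}\norm{\Iin_{L_m}\xi}^2\geq\ve^2-\frac{\ve^4}{3n}\geq\frac{\ve^2}{2}.\]
If the constants turn out tight, the fallback is to use a finer threshold for $H$ such as $\frac{\ve^2}{8n}$, which the factor $64$ accommodates.

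Finally, good levels are finite and pairwise disjoint, and each satisfies $\abs{L_m}\geq n$ and $\max\abs{\xi}\leq 2\min\abs{\xi}\leq 2^{-m}\leq\frac{\ve^2}{2n}$. Taking $I$ to be the set of good indices and $F_m=L_m$ proves the lemma. The main obstacle is getting the threshold bookkeeping consistent: the dyadic rounding must be placed so that the ``$2\min\leq\frac{\ve^2}{2n}$'' condition and the Chebyshev bound on $\abs{H}$ both fit under $64\ve^{-4}n^2$.
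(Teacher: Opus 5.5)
Your strategy is the right one and matches the paper's structure: dyadic level sets, discarding the high levels by Chebyshev plus the hypothesis, and discarding the sparse low levels by a geometric mass bound. However, the final verification relies on a false inequality. On $L_m$ you have $\min_{L_m}\abs{\xi}>2^{-m-1}$, so $2\min_{L_m}\abs{\xi}>2^{-m}$, not $\leq 2^{-m}$. The only available bound is $2\min_{L_m}\abs{\xi}\leq 2\max_{L_m}\abs{\xi}\leq 2^{1-m}$.

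With your definition of ``good'' ($2^{-m}\leq\frac{\ve^2}{2n}$), the second bullet can therefore fail. Suppose $\frac{\ve^2}{2n}=2^{-m}$ and $\abs{\xi}\equiv 2^{-m}$ on an $L_m$ with $\abs{L_m}\geq n$. Your construction includes this level, yet $2\min_{L_m}\abs{\xi}=\frac{\ve^2}{n}>\frac{\ve^2}{2n}$. For the same reason, the constant $64$ is not slack, contrary to what you say.

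The fix is your ``fallback'', but it must actually be adopted. Call $L_m$ good iff $\abs{L_m}\geq n$ and $2^{-m}\leq\frac{\ve^2}{4n}$.
\begin{itemize}
\item Every point of a high level then satisfies $\abs{\xi(x)}>2^{-m-1}>\frac{\ve^2}{8n}$. By Chebyshev, $\abs{H}\leq(8n/\ve^2)^2=64\ve^{-4}n^2$, which is exactly the constant in the hypothesis.
\item The small levels carry mass at most $\frac43 n(\frac{\ve^2}{4n})^2=\frac{\ve^4}{12n}\leq\frac{\ve^2}{2}$.
\item On good levels, $\max\abs{\xi}\leq 2\min\abs{\xi}\leq 2^{1-m}\leq\frac{\ve^2}{2n}$.
\end{itemize}
The rest of your argument then goes through verbatim. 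A minor point: $H$ is contained in $\{\abs{\xi}>\ve^2/(4n)\}$, not equal to it ``enlarged''; this is harmless.

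After this correction your proof is essentially the paper's. The only difference is in how sparse levels are declared. The paper uses the level-dependent threshold $\abs{F_k}\leq\ve^2 2^{k-3}$ and bounds the discarded mass by $\ve^2/2$ summed over all levels. You use the fixed threshold $n$ below the cutoff, which discards less mass but is otherwise equivalent.
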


\begin{proof}
    By the layer-cake decomposition, we have $\sum_{k=1}^\infty 2^{-2k}\abs{F_k}\leq1$, where $F_k\coloneqq\set{x\setcolon 2^{-k}<\abs{\xi(x)}\leq2^{-k+1}}$.
    Define $\Bad\coloneqq\set{k\setcolon\abs{F_k}\leq\ve^22^{k-3}}$, then one has
    \[\sum_{k\in\Bad}\norm{\Iin_{F_k}\xi}^2\leq\sum_{k\in\Bad}2^{-2k+2}\abs{F_k}\leq\sum_k2^{-2k+2}\ve^22^{k-3}=\frac{\ve^2}{2}.\]
    Define $k_0\coloneqq\max\set{k\mid\ve^22^{k-3}\leq n}$, then one has
    \[\abs[\Big]{\bigcup_{k\leq k_0}F_k}\leq\sum_k2^{2k_0-2k}\abs{F_k}\leq2^{2k_0}\leq64\ve^{-4}n^2.\]
    Thus, $\sum_{k>k_0}\norm{\Iin_{F_k}\xi}^2\geq\ve^2$ holds.
    Define $I\coloneqq\set{k\mid k>k_0,\ k\notin\Bad}$, then one has $\sum_{k\in I}\norm{\Iin_{F_k}\xi}^2\geq\frac{\ve^2}{2}$.

    Since $k\notin\Bad$ for $k\in I$, $\abs{F_k}>\ve^22^{k-3}\geq\ve^22^{k_0-2}> n$ holds.
    One has $\max_{x\in F_k}\abs{\xi(x)}\leq2\min_{x\in F_k}\abs{\xi(x)}\leq2^{-k_0+1}<\frac{\ve^2}{2n}$.
\end{proof}

In particular, $\sum_k\frac{\abs{\inpro{\Iin_{F_k},\abs{\xi}}}^2}{\abs{F_k}}\geq\frac{\ve^2}{8}$ holds.

\begin{prop}
    If $T\in\IB(\ell_2)$ satisfies $T\II,\II T\subset Q$, then $T\in Q$.
    In particular, $M(\II)\subset Q$ holds.
    Moreover, $M(A\cap\mathrm{Ghost})\subset Q$ for any $\Cstar$-subalgebra $A\subset Q$ containing $\Roe\Univ$, where $\mathrm{Ghost}$ stands for the subspace of ghost operators.
\end{prop}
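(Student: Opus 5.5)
We argue by contraposition. Suppose $T\notin Q$. Then there is $\ve>0$ such that for every $d$ some column or row of $T$ is not $\ve$-$d$-sparse; replacing $T$ by $T^*$ if necessary (the hypothesis is symmetric, since $\II$ is self-adjoint and $Q$ is $*$-closed), we may assume there are columns $\xi_n\coloneqq T\delta_{y_n}$, with $\norm{\xi_n}\leq\norm{T}$ (normalize $\norm{T}\leq1$), that are not $\ve$-$64\ve^{-4}n^2$-sparse. The preceding lemma then gives disjoint finite sets $F^{(n)}_k$, $k\in I_n$, with $\abs{F^{(n)}_k}\geq n$, on which $\abs{\xi_n}$ is roughly constant and below $\ve^2/(2n)$, and which carry at least $\ve^2/2$ of the mass of $\xi_n$. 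The aim is to build a ghost operator $S\in\II$ for which $ST$ (or $TS$) has a column that is not $\ve'$-$d$-sparse for every $d$, contradicting $\II T\subset Q$.

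The construction of $S$: on each $F^{(n)}_k$ put the averaging projection $\frac{1}{\abs{F^{(n)}_k}}\Iin_{F^{(n)}_k}\otimes\Iin_{F^{(n)}_k}^*$, twisted by the phases of $\xi_n$ so that it captures $\xi_n$ rather than $\abs{\xi_n}$. A single such rank-one projection has entries $\leq 1/n$, so it is ghost as $n\to\infty$; but it has width $\abs{F_k}$, which is unbounded, so it is not in $\Roe\Univ$. The fix is to replace it by an expander-style approximation as in Section 2: the projection onto constants on a set of size $\geq n$ is $O(d^{-1/2})$-close to a $d$-regular expander adjacency operator, and in fact the expander's normalized adjacency $A_k$ itself satisfies $\norm{A_k-\frac{1}{\abs{F_k}}\Iin\otimes\Iin^*}\leq O(d^{-1/2})$ and has entries $1/d$. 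To obtain ghostness we want entries tending to $0$, so instead I would use the rank-one average directly but approximate it in $\Roe\Univ$ by expanders of growing degree; since the averages themselves have entries $\leq1/n\to0$, their sum over all $n,k$ (after passing to a subsequence of $n$ so that the supports $\bigcup_kF^{(n)}_k$ are disjoint across $n$, possible since each column is in $\ell_2$ and we may discard finite sets) is ghost, and it lies in $\Roe\Univ$ because each block is $O(d^{-1/2})$-$d$-width uniformly. Hence $S\in\II$.

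Now compute $S\xi_n$: on $F_k$ it equals the average of the phase-corrected $\xi_n$, whose norm squared is $\abs{\inpro{\Iin_{F_k},\abs{\xi_n}}}^2/\abs{F_k}$, so by the remark after the lemma $\norm{S\xi_n}^2\geq\ve^2/8$. Moreover $S\xi_n$ is spread out: its entries on $F_k$ are constant and $\leq\ve^2/(2n)$, so any $d$-sparse vector captures at most $d(\ve^2/2n)^2$ of its squared mass, which tends to $0$. Thus the columns $ST\delta_{y_n}$ are not $\ve/4$-$d$-sparse for any fixed $d$ and large $n$, so $ST\notin Q$, a contradiction. The main obstacle is the disjointness across $n$ and the cross-terms: $S$ applied to $\xi_n$ also sees the blocks built from other $\xi_m$. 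I would handle this by choosing the subsequence inductively so that $\xi_n$ has negligible mass (say $<2^{-n}\ve$) on the supports used for $m<n$ and the later supports avoid a large finite set carrying most of the earlier $\xi_m$. For the last assertion the same $S$ works, since it lies in $\Roe\Univ\cap\mathrm{Ghost}\subset A\cap\mathrm{Ghost}$.
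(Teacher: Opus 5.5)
Your proposal is correct and follows essentially the same route as the paper. You reduce to columns, apply the level-set lemma to columns with pairwise disjoint supports, and take the ghost block-diagonal averaging operator $S=\bigoplus_{k,n}P_{F^{(n)}_k}$, which lies in $\Roe\Univ$ by expander approximation. The columns of $ST$ then have norm $\gtrsim\ve$ but uniformly small entries, contradicting $\II T\subset Q$.

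The differences are cosmetic. You twist $S$ by the phases of $\xi_n$, where the paper multiplies $T$ on the left by a diagonal unitary. You control cross-terms by an inductive approximate choice, whereas the paper first picks pairwise disjoint finite $E_n'$ with $\norm{\Iin_{(E_n')^c}T\delta_{x_n'}}\le\ve$ and applies the lemma to $\Iin_{E_n'}T\delta_{x_n'}$. Since $\Iin_{E_n'}$ then commutes with $S$, the cross-terms vanish exactly. Both constructions rely on the weak nullity $T\delta_{x_n}\rightharpoonup0$, not merely on the columns lying in $\ell_2$ as your justification suggests.
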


\begin{proof}
    Suppose that $T\in\IB(\ell_2)$ satisfies $T\II,\II T\subset Q$ but $T\notin Q$.
    By scaling, we may assume $\norm{T}\leq1$.
    Then there exists $\ve>0$ such that for every $d$, some column or row of $T$ is not $2\ve$-$d$-sparse.
    By replacing $T$ with $T^*$, we may assume that the columns are so.
    Choose distinct $\{x_n\}\subset\IN$ such that $\norm{\Iin_{F^c}T\delta_{x_n}}\geq2\ve$ for every $n$ and $F\subset\IN$ with $\abs{F}\leq64\ve^{-4}n^2$.
    Since $T\delta_{x_n}\rightharpoonup0$ in the weak topology of $\ell_2$, $\norm{\Iin_{F}T\delta_{x_n}}\to0$ for every $F\Subset\IN$ and we choose a subsequence $\{x_n'\}$ such that there exists a sequence of pairwise disjoint finite subsets $E_n'\Subset\IN$ satisfying $\norm{\Iin_{E_n'^c}T\delta_{x_n'}}\leq\ve$.
    
    Applying the previous lemma to $\xi_n'\coloneqq\Iin_{E_n'}T\delta_{x_n'}$ with parameter $\ve$, we obtain disjoint subsets $\{F_k^{n'}\}_{k,n'}$.
    Multiplying a diagonal operator on $T$ from the left, we may assume that $\xi_n'\geq0$ entrywise.
    Denote by $P_F\coloneqq\frac{1}{\abs{F}}\Iin_F\otimes\Iin_F^*$ the projection onto $\IC\Iin_F$.
    
    Since $P_F$ is $O(d^{-\frac12})$-$d$-width uniformly on $F$ by considering the adjacency matrix of expanders, $S\coloneqq\bigoplus_{k,n'}P_{F_k^{n'}}$ belongs to $\Roe\Univ$.
    Since $\abs{F_k^{n'}}\geq n'$ and there are finitely many $k$ for fixed $n'$ ($\supp\xi_n'$ is finite), $S$ is also in $\II$.
    Therefore, $ST\in Q$ and each $\eta_n'\coloneqq \Iin_{E'_n}ST\delta_{x_n'}=(\bigoplus_kP_{F_k^{n'}})\xi_n'$ is $\frac{\ve}{100}$-$d$-sparse for some $d$.
    However, by the previous lemma, one has $\norm{\eta_n'}\geq\frac{\ve}{2\sqrt2}$ and $\max_x\abs{\eta_n'(x)}\leq\frac{\ve^2}{2n'}$.
    This is a contradiction because for every $F\subset\IN$ with $\abs{F}\leq d$, we have $\norm{\Iin_F\eta_n'}\leq\sqrt{d}\max_x\abs{\eta_n'(x)}\to0$ as $n\to\infty$.
\end{proof}

Combining the preceding propositions, we obtain Theorem B.
\clearpage

\section{Proof of Theorem C}

This lemma comes from the original proof of Voiculescu's theorem. 

\begin{lem}
    Let $a,b\in\IB(\cH)$ be self-adjoint operators.
    There exist orthogonal subspaces $\cH_n\subset\cH\ (n=0,1,\dots)$ with $\dim\cH_n\leq2^n$ such that $a,b$ are tridiagonal with respect to this block decomposition, that is, $a\cH_n,b\cH_n\subset\cH_{n-1}\oplus\cH_n\oplus\cH_{n+1}$ for every $n$, and the compression $\Cstar(a,b)\subset\IB(\cH)\to\IB(\bigoplus_n\cH_n)$ is injective.
\end{lem}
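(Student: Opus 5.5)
We will build the subspaces $\cH_n$ by a Krylov-type construction starting from a suitable vector. We may assume that $\cH$ is separable; otherwise we first pass to the closed $\Cstar(a,b)$-invariant subspace generated by a countable set of vectors that gives a faithful subrepresentation, or we handle the general case by taking direct sums. Fix a dense sequence $(\zeta_k)_{k\ge0}$ in the unit ball of $\cH$.

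The construction is inductive. Set $\cH_0\coloneqq\IC\zeta_0$, so $\dim\cH_0\le1=2^0$. Having defined $\cH_0,\dots,\cH_n$, put $K_n\coloneqq\cH_0\oplus\dots\oplus\cH_n$ and define $\cH_{n+1}$ as the orthogonal complement of $K_n$ inside $K_n+a\cH_n+b\cH_n$. The dimension count $\dim\cH_{n+1}\le 2\dim\cH_n$ gives $\dim\cH_n\le2^n$, but this alone does not let us insert new vectors $\zeta_k$. To keep room for them, we will slightly modify the recursion. Since $a$ and $b$ are self-adjoint, $a\cH_n\perp\cH_m$ for $m<n-1$: for $\eta\in\cH_m$ we have $a\eta\in K_{m+1}\subset K_{n-1}$, and $\cH_n\perp K_{n-1}$. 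Hence $a\cH_n\subset\cH_{n-1}\oplus\cH_n\oplus\cH_{n+1}$, and tridiagonality is automatic from the construction. We only need dimension control and a way to make $\bigoplus\cH_n$ large. Tridiagonality survives adding the component of a new vector $\zeta_k$, orthogonal to $K_n$, into $\cH_{n+1}$, so at sparse levels $n=n_k$ we add it. We choose the $n_k$ with $n_k\to\infty$ fast enough that the bound $\dim\cH_{n+1}\le 2\dim\cH_n+1$ together with the occasional $+1$ still yields $\dim\cH_n\le2^n$; for instance, we can start with $\cH_0=0$ or rescale the indices. Since the bound $2^n$ is generous, we shift indices and let $\cH_n$ grow at most like $2^n-1$: if $\dim\cH_n\le 2^n-1$, then $2(2^n-1)+1=2^{n+1}-1$, so we may add one new vector at every step.

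With this choice we add $\zeta_n$ at step $n$. Then $\bigoplus_n\cH_n$ contains every $\zeta_k$ up to its projection onto the earlier $K$, so $\bigoplus_n\cH_n$ is dense, hence all of $\cH$. The compression is then the identity representation and trivially injective. This suggests that the intended statement is nontrivial only in the sense that the blocks are small and tridiagonal. If one prefers $\bigoplus\cH_n\ne\cH$, as for a nonseparable $\cH$, then injectivity follows because $\bigoplus\cH_n$ is invariant under $a$ and $b$ by tridiagonality. Hence the compression is a genuine subrepresentation, and faithfulness follows once $\bigoplus\cH_n$ contains a faithful cyclic family. We obtain such a family from a countable dense set of the separable algebra $\Cstar(a,b)$: choose vectors $\zeta_k$ with $\norm{x_k\zeta_k}\ge\frac12\norm{x_k}$.

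The main obstacle is bookkeeping the dimension bound while inserting new vectors, which is resolved by the $2^n-1$ induction above. Invariance of $\bigoplus\cH_n$ under $a$ and $b$, and hence multiplicativity of the compression, follows directly from the tridiagonal structure and self-adjointness.
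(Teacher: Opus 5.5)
Your proof is correct. Its core is the same as the paper's: the Krylov recursion $\cH_{n+1}=(K_n+a\cH_n+b\cH_n)\ominus K_n$, with tridiagonality coming from $a\cH_m\subset K_{m+1}$ together with self-adjointness. The two arguments differ only in how they secure faithfulness.

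The paper first reduces to separable $\cH$. It then picks a single vector $\xi$ whose cyclic subrepresentation is already faithful, by decomposing $\cH$ into countably many cyclic summands and taking a weighted sum of their cyclic vectors. It runs the pure recursion from $\cH_0=\IC\xi$, so exact doubling gives $\dim\cH_n\le 2^n$, and $\bigoplus_n\cH_n$ is that cyclic subspace.

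You instead feed countably many vectors into the recursion, one per level, paying $+1$ in dimension each time. Starting from $\cH_0=0$ absorbs this cost and gives $\dim\cH_n\le 2^n-1$. This buys you three things:
\begin{itemize}
\item You avoid the cyclic-vector lemma.
\item The argument works verbatim for nonseparable $\cH$ when you inject unit vectors $\zeta_k$ with $\norm{x_k\zeta_k}\ge\frac12\norm{x_k}$ for a dense sequence $(x_k)$ in $\Cstar(a,b)$.
\item For a dense sequence in separable $\cH$, you get $\bigoplus_n\cH_n=\cH$, so the compression is simply the identity.
\end{itemize}
The paper's route buys a slightly cleaner recursion and the sharp bound $2^n$ with a nonzero $\cH_0$.

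When you write it up, drop the initial choice $\cH_0=\IC\zeta_0$ and the ``sparse levels'' idea. With $\dim\cH_0=1$, the bound $2\dim\cH_n+1$ already allows $\dim\cH_1=3>2$, and any insertion is incompatible with an a priori bound of exactly $2^n$. Only the bookkeeping with $\cH_0=0$, $\dim\cH_n\le 2^n-1$ and injected vectors indexed from $1$ works, and it suffices.

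Also state explicitly that $\bigoplus_n\cH_n$ is invariant because $aK_n,bK_n\subset K_{n+1}$. It is therefore invariant under $\Cstar(a,b)$, so the compression is a $*$-homomorphism. Faithfulness then follows from $\norm{x\zeta_k}\ge\frac18\norm{x}$ whenever $\norm{x-x_k}<\frac14\norm{x}$.
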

\begin{proof}
    We may assume that $\cH$ is separable.
    There is a vector $\xi\in\cH$ such that $\Cstar(a,b)\subset\IB(\cH)\to\IB(\cH')$ is injective, where $\cH'\coloneqq\overline{\Cstar(a,b)\xi}$ is the subrepresentation generated by $\xi$.
    Indeed, decompose $\cH$ into the direct sum of cyclic representations and take $\xi$ as the weighted sum of these cyclic vectors.
    %
    %

    Define subspaces 
    \[\cH_0\coloneqq\IC\xi,\ \cH_{n+1}\coloneqq\paren[\big]{(\bigoplus_{k\leq n}\cH_k)+a\cH_n+b\cH_n}\cap\paren[\big]{\bigoplus_{k\leq n}\cH_k}^\perp,\]
    inductively.
    By $a\cH_n,b\cH_n\subset\cH_{n+1}\oplus\paren[\big]{\bigoplus_{k\leq n}\cH_k}$, $\bigoplus_n\cH_n$ is a $\Cstar(a,b)$-invariant subspace and $=\cH'$.
    $a\cH_n,b\cH_n$ are orthogonal to $\cH_m$ for $m\geq n+2$ by the definition and so are for $m\leq n-2$ by self-adjointness.
    Note that $\dim\cH_{n+1}\leq\dim(a\cH_n+b\cH_n)\leq2\dim\cH_n$.
\end{proof}

\begin{prop}
    There is a bounded degree infinite graph $X$ such that every separable $\Cstar$-algebra embeds into $\Ql X\ (\subset\Ql\Univ)$.
    The image of this embedding is ghost.
\end{prop}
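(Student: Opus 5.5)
The plan is to reduce to a single "universal" separable algebra, block-tridiagonalize it with the preceding lemma, and then realize the blocks on a bounded degree graph whose expander structure makes the resulting operators quasi-local and ghost.

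\emph{Reduction.} Every separable $\Cstar$-algebra embeds into the Cuntz algebra $\mathcal{O}_2$ (Kirchberg), and $\mathcal{O}_2$ is generated by two self-adjoint elements $a,b$. It therefore suffices to embed $\Cstar(a,b)$ into $\Ql X$ with ghost image.

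\emph{Tridiagonal picture.} Apply the preceding lemma to $a,b$ acting on the universal representation. This gives orthogonal subspaces $\cH_n$ with $\dim\cH_n\leq2^n$, an injective compression to $\bigoplus_n\cH_n$, and compressed $a,b$ that are block tridiagonal. Since $\Cstar(a,b)$ is simple, the compressed algebra is again isomorphic to it. We may enlarge each $\cH_n$ so that $\dim\cH_n=2^n$ exactly.

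\emph{Embedding into the graph.} For each $n$ choose a finite set $Y_n$ with $\abs{Y_n}=N_n\gg 2^n$, and embed $\cH_n$ isometrically into $\ell_2(Y_n)$ via a Haar random isometry $V_n$, as in Lemma~\ref{Ozawa}. Let $X$ be the disjoint union of the $Y_n$ carrying the following graph structure.
\begin{itemize}
\item Each $Y_n$ carries a $d$-regular expander.
\item Consecutive levels $Y_n$ and $Y_{n+1}$ are joined by a bounded degree bipartite expander-type graph, for instance a random bounded degree matching structure when $N_{n+1}/N_n$ is bounded.
\end{itemize}
Define $\pi(x)\coloneqq\bigoplus_{n,m}V_m x_{m,n}V_n^*$, where $x_{m,n}$ is the $(m,n)$ block of the compression of $x$, which is nonzero only for $\abs{m-n}\leq1$ when $x\in\{a,b\}$. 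This is a $*$-homomorphism only if we use $V=\bigoplus V_n$, that is, $\pi(x)=VxV^*$. Hence $\pi$ is an injective $*$-homomorphism into $\IB(\ell_2X)$, namely conjugation by the isometry $V$.

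\emph{Quasi-locality.} It is enough to check $\pi(a)$ and $\pi(b)$, since $\Ql X$ is a $\Cstar$-algebra. Fix $A,B\subset X$ with $\mathrm{dist}(A,B)>R$. The blocks of $V a V^*$ join only levels $n$ and $n\pm1$, so it suffices to bound $\norm{\Iin_A V_m T V_n^*\Iin_B}$ for operators $T$ between levels with $\abs{m-n}\leq1$.

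The expander property together with the level-to-level expansion forces, for every level, either $\abs{A\cap Y_n}$ or $\abs{B'\cap Y_n}$ to be at most $(1+h)^{-R/2}N_n$. Here $B'$ is the part of the $R$-neighbourhood structure seen from the adjacent level. The estimate of Lemma~\ref{Ozawa} then shows that, with high probability, $\norm{\Iin_EV_n}\leq2(\sqrt\delta+\ve)$ for all $E$ with $\abs{E}\leq\delta N_n$, using $\dim\cH_n=2^n=o(N_n)$. Borel--Cantelli makes this hold for all large $n$ almost surely. One of the two sides is therefore small, which gives $\norm{\Iin_A\pi(a)\Iin_B}\lesssim\ve$, and the finitely many small levels are absorbed by enlarging $R$.

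\emph{Ghostness.} The entries of $V_n$ are $O(\sqrt{2^n\log N_n/N_n})\to0$ with high probability, so the matrix entries of $\pi(x)$ tend to $0$. This holds for $a$ and $b$; it passes to products because ghost quasi-local operators form an ideal-like class, and the image of a simple algebra containing no compacts is ghost anyway.

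\emph{Main obstacle.} The difficulty is the interaction between levels. I must choose the inter-level bipartite graphs and growth rates $N_n$ such that distance greater than $R$ between $A$ and $B$ forces smallness on a single level even when $A$ and $B$ sit on adjacent levels. I would try making $Y_{n+1}=Y_n\times[2]$ with a product expander structure, so that $X$ is a bounded degree box-space-like graph in which neighbourhoods project onto neighbourhoods.
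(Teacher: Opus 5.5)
There is a genuine gap in your reduction step. Kirchberg's theorem says that a separable $\Cstar$-algebra embeds into $\mathcal O_2$ if and only if it is \emph{exact}. Since $\Cstar$-subalgebras of exact algebras are exact, a non-exact algebra such as the full group $\Cstar$-algebra $\Cstar(\mathbb F_2)$ never embeds into $\mathcal O_2$. As written, your argument therefore proves the statement only for exact algebras. The paper does not use any universal target. It embeds $A$ into $A\otimes\IK$, which is generated by two self-adjoint elements by Olsen--Zame, and then uses the injectivity of the compression that is built into the tridiagonalization lemma. Your appeal to simplicity is therefore unnecessary, and it would not be available for $A\otimes\IK$ anyway.

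The second gap is the step you yourself flag as the main obstacle: you never fix the inter-level graph. Your tentative choice $Y_{n+1}=Y_n\times[2]$ gives $N_n\asymp2^n\asymp\dim\cH_n$, which destroys the condition $\dim\cH_n=o(N_n)$. That condition is needed in Lemma~\ref{Ozawa}, where $5^{2r_n}e^{(H(\delta)-\ve^2)N_n}$ must be summable for every small $\ve$. It is also needed for ghostness, since the diagonal of $V_nV_n^*$ averages $\dim\cH_n/N_n$.

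The paper removes the obstacle altogether:
\begin{itemize}
\item It takes $\abs{X_n}=3^n$ and puts a $d$-regular expander on each union $X_n\sqcup X_{n+1}$, so $X$ has degree $2d$.
\item It writes every block-tridiagonal operator as a sum of two operators that are block diagonal with respect to the pairs $\cH_n\oplus\cH_{n+1}$, with $n$ even and $n$ odd respectively.
\item Distances in $X$ are at most distances in each pair's expander, and $V_n\oplus V_{n+1}$ satisfies $\norm{\Iin_E(V_n\oplus V_{n+1})}\leq2(\sqrt\delta+\ve)$ whenever $\abs{E}\leq\frac\delta4\abs{X_n\sqcup X_{n+1}}$.
\item Hence Ozawa's argument applies pair by pair, with no level-to-level expansion needed.
\end{itemize}

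Your layered design can also be completed. For example, take $N_{n+1}=3N_n$, an expander on each level, and join each vertex of $Y_n$ to its three lifts in $Y_{n+1}$. A $\delta$-fraction subset of a level has an $O_{\delta,h}(1)$-neighbourhood covering more than half of that level, and one further step covers more than half of the next level. So $\mathrm{dist}(A,B)>R$ forces $\abs{A\cap Y_{n+1}}\leq\delta N_{n+1}$ or $\abs{B\cap Y_n}\leq\delta N_n$. This has to be stated and checked, not left open.

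Finally, argue ghostness via
\[\abs{\inpro{\delta_y,VxV^*\delta_z}}\leq\norm{x}\norm{V^*\delta_y}\norm{V^*\delta_z},\qquad \sup_{y\in Y_n}\norm{V_n^*\delta_y}\to0,\]
which makes all of $V\IB(\cH)V^*$ ghost at once. Drop your remark that the image of a simple algebra without compacts is automatically ghost: it is false, since a diagonal projection $\Iin_E$ with $E$ infinite can lie in such an algebra.
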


\begin{proof}
    The graph is constructed on the vertex set $X=\bigsqcup_n X_n$, $\abs{X_n}=3^n$ with $d$-regular expanders on $X_n\sqcup X_{n+1}$ for each $n$.
    Thus, the resultant graph $X$ has degree $2d$.

    Let $A$ be a separable $\Cstar$-algebra.
    Tensoring with $\IK$, we may assume that $A$ is generated by two self-adjoint $a,b$ \cite{OZ76}.
    Take a representation of $A$ on $\cH=\bigoplus_n\cH_n$
    with $\dim\cH_n\leq2^n$ such that $a,b$ are tridiagonal on this block decomposition.

    Define the Haar random isometry $V_n\colon\cH_n\to\ell_2(X_n)$ and $V=\bigoplus V_n\colon\cH\to\ell_2X$.
    We claim that the $*$-homomorphism $\pi\colon\IB(\cH)\ni T\mapsto VTV^*\in\IB(\ell_2X)$ maps every block-tridiagonal operator into $\Ql X$.
    Once the claim is proved, we obtain $\pi(a),\pi(b)\in\Ql X$, and hence $\pi(A)\subset\Ql X$.

    Let us prove the claim.
    By Lemma~\ref{Ozawa} or \cite{Oza25}, $\norm{\Iin_E V_n}\leq2(\sqrt{\delta}+\ve)$ whenever $E\subset X_n$ satisfies $\abs{E}\leq\delta\abs{X_n}$ almost surely, for $\ve,\delta$ with $H(\delta)<\ve^2$.
    Note that $V_n\oplus V_{n+1}\colon\cH_n\oplus\cH_{n+1}\to\ell_2(X_n\sqcup X_{n+1})$ is not the Haar random isometry $\cH_n\oplus\cH_{n+1}\to\ell_2(X_n\sqcup X_{n+1})$ but we have a similar estimate for $V_n\oplus V_{n+1}$:
    \[\norm{\Iin_E(V_n\oplus V_{n+1})}\leq2(\sqrt{\delta}+\ve)\text{ whenever }E\subset X_n\sqcup X_{n+1}\text{ is }\abs{E}\leq\frac{\delta}{4}\abs{X_n\sqcup X_{n+1}}.\]
    Arguing as in Lemma~\ref{Ozawa} or \cite{Oza25} for the expanders on $X_n\sqcup X_{n+1}$ and $\dim(\cH_n\oplus\cH_{n+1})=o(\abs{X_n\sqcup X_{n+1}})$, we have that $\pi$ maps $\prod_{n:\text{even}}\IB(\cH_n\oplus\cH_{n+1}),\prod_{n:\text{odd}}\IB(\cH_n\oplus\cH_{n+1})$ into $\Ql X$.
    Since every block-tridiagonal matrix is a sum of $\prod_{\text{odd}}$ and $\prod_{\text{even}}$, we obtain the claim.
\end{proof}

The following lemma shows that the previous proposition is genuinely nontrivial.
This lemma is the universal version of \cite[Lem. 1.11]{BB+24}

\begin{lem}\label{BCT}
    $\IB(\cH)$ does not embed into $\Ql\Univ$ if $\dim\cH=\infty$.
\end{lem}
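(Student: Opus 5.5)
We would prove Lemma~\ref{BCT} by contradiction. Since $\Ql\Univ\subset Q$ by Theorem~B, it suffices to show that $\IB(\cH)$ admits no injective $*$-homomorphism $\pi\colon\IB(\cH)\to Q$. We may assume $\cH=\ell_2$. The argument has three steps: reduce to the case where $\pi$ has a nonzero normal part, produce an infinite-dimensional subspace inside a single column, and then use a Baire category argument to get a uniform sparsity constant that this subspace cannot satisfy.

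\emph{Step 1: $\pi$ is nonzero on the compacts.} Suppose $\pi|_{\IK(\cH)}=0$. Then $\pi$ factors through the Calkin algebra, injectively. The Calkin algebra contains uncountably many pairwise orthogonal nonzero projections, namely the images of $\Iin_{S}$ for an uncountable almost disjoint family of infinite subsets $S\subset\IN$. Their images under $\pi$ would be uncountably many pairwise orthogonal nonzero projections on the separable space $\ell_2$, which is impossible. Hence $\pi|_{\IK}\neq0$.

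\emph{Step 2: an infinite-dimensional subspace in one column.} By the structure of representations of $\IK$, the restriction of $\pi$ to the essential subspace $\cH_0$ of $\pi(\IK)$ is unitarily a multiple of the identity representation. In particular it is normal on $\cH_0$, and $p\coloneqq\pi(e_{11})$ is a nonzero projection. Choose a basis vector $\delta_x$ with $p\delta_x\neq0$, and set $\xi\coloneqq p\delta_x/\norm{p\delta_x}$. The vectors $\eta_n\coloneqq\pi(e_{n1})\xi$ are orthonormal. By normality on $\cH_0$, every vector of norm at most $\norm{p\delta_x}$ in $K_0\coloneqq\overline{\operatorname{span}}\set{\eta_n}$ has the form
\[\pi(T)\delta_x=\pi(T)p\delta_x\]
for some contraction $T$ (for instance a suitable partial isometry times $e_{11}$). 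Thus the $x$-th columns of $\set{\pi(T)\setcolon\norm{T}\leq1}$ contain a ball of radius $c\coloneqq\norm{p\delta_x}>0$ in the infinite-dimensional subspace $K_0$.

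\emph{Step 3: Baire category.} Fix $\ve>0$ small compared with $c$. For each $d$, let $E_d$ be the set of contractions $T\in\IB(\cH)$ such that $\pi(T)$ is $\ve$-$d$-inQ. We check that $E_d$ is norm closed. If $T_k\to T$, then the $d$-sparse approximants of the columns of $\pi(T_k)$ are bounded, so we can pass to coordinatewise convergent subsequences with supports of size at most $d$. Since $\pi(T)\in Q$ for every $T$, the union $\bigcup_dE_d$ is the whole unit ball. By the Baire category theorem, some $E_d$ contains a ball $\Ball(T_0,r)$. For $\norm{S}\leq r$ we write $S=(T_0+S)-T_0$; the $x$-th column of $\pi(S)$ is then a difference of two $\ve$-$d$-sparse vectors, hence $2\ve$-$2d$-sparse. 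Combining with Step 2, every vector of norm $rc$ in $K_0$ is $2\ve$-$2d$-sparse. This contradicts infinite-dimensionality. Indeed, choose a unit vector $\zeta\in K_0$ orthogonal to $\delta_1,\dots,\delta_M$ for large $M$, so that $\zeta$ is nearly supported far out. A $2d$-sparse vector within $2\ve/(rc)$ of $\zeta$ needs $2d$ coordinates carrying almost all of the mass. Repeating this with $2d+1$ (or many) orthonormal such vectors, which have essentially disjoint mass distributions, a pigeonhole count of coordinates gives the contradiction once $\ve\ll rc$. We can also argue more simply: pick an orthonormal sequence $\zeta_k\rightharpoonup0$ in $K_0$ and average $m$ of them with disjoint far-out supports up to small error. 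The average is spread out and cannot be $d$-sparse with small error when $m\gg d$.

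The main obstacle is Step 3. The delicate points are the closedness of $E_d$ and the final "spreading" estimate, namely that averaging vectors with nearly disjoint supports produces a vector far from every $2d$-sparse vector. Here one must adjust $\ve$ relative to $rc$; since $r$ depends on $\ve$ through the Baire argument, this adjustment has to be handled with care. If that interplay fails, the fallback is to run Baire on $Q$-membership with $\ve$ tied to a fixed fraction after first normalizing. Step 1 is standard and mirrors the argument of \cite[Lem.~1.11]{BB+24}.
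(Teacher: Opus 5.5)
\textbf{There is a genuine gap: your first reduction is to a false statement.} The paper's Remark right after this lemma shows that $\IB(\ell_2)$ \emph{does} embed into $Q$. Take the isometry $V\colon\ell_2\to\bigoplus_n\ell_2^n$ with $Ve_n=\frac1{\sqrt n}\Iin\in\ell_2^n$. Then $\pi(T)=VTV^*$ is an injective $*$-homomorphism, and every column of $\pi(T)$ indexed by the $m$-th block equals $\frac1{\sqrt m}VTe_m$, so it has norm at most $\norm{T}/\sqrt m$; rows are handled the same way via $T^*$. Hence $\pi(T)\in Q$.

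So the obstacle you flag in Step~3 cannot be fixed by tuning constants. In this example the Baire conclusion already holds with $T_0=0$ and any $r<\ve$, since every column of $\pi(S)$ with $\norm{S}\le r$ is trivially $\ve$-$0$-sparse. Your resulting claim, that vectors of norm $rc$ in $K_0$ are $2\ve$-$2d$-sparse, is then vacuous. The underlying defect is that the inQ condition only constrains the columns $\pi(T)\delta_x$ at single basis vectors, and these can be uniformly small. After passing to any cofinite corner, $c=\norm{\pi(e\otimes e^*)\delta_x}\le 1/\sqrt N$ in this example. Your proposed fallback fails for the same reason.

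The paper stays inside $\Ql\Univ$ and uses two ingredients that your argument lacks.

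\emph{Global normality and WOT-Baire.} The singular part of $\pi$ factors through the Calkin algebra, which is simple and contains continuum many orthogonal projections, so it vanishes on the separable space $\ell_2$. (Your Step~1 only shows $\pi|_{\IK}\neq0$, which is immediate from injectivity.) Normality, together with WOT-closedness of $\mathrm{QL}(\ve,d)$ in the unit ball, lets you run the Baire category theorem in the WOT rather than in norm. A WOT-interior point then yields a finite-rank projection $p$ and an operator $T_0$ with $T_0+T\in\pi^{-1}(\mathrm{QL}(\ve,d))$ for all $T\in\IB((1-p)\cH)_{\le1}$. So the whole unit ball of a corner, not a small norm ball, lands in $\mathrm{QL}(\ve,2d)$, with $\ve$ fixed in advance.

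\emph{Quasi-locality on finitely supported vectors.} Quasi-locality controls the image of a finitely supported vector, not just of a basis vector. Take $\xi$ finitely supported with $\norm{\xi-Ve}\le\ve$ and use $\pi(\theta\otimes e^*)Ve=V\theta$. This gives $\norm{\Iin_{F^c}V\theta}\le2\ve$ with $\abs{F}\le2d\abs{\supp\xi}$, uniformly over unit vectors $\theta$. Your averaging step then gives the contradiction, exactly as in the paper.

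WOT-Baire alone does not rescue the inQ version. In the example above, if $p$ is the projection onto the first $M\ge\ve^{-2}$ basis vectors, then $\pi$ maps all of $\IB((1-p)\cH)_{\le1}$ to $\ve$-$0$-inQ operators. So replacing column sparsity by quasi-locality is the essential point.
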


\begin{proof}
    Suppose not and fix a nonzero $*$-homomorphism $\pi\colon\IB(\cH)\to\Ql\Univ\subset\IB(\ell_2)$.
    This is automatically normal by the separability of $\ell_2$.
    Indeed, we need to show that there is no nonzero representation of $\IB/\IK$ on $\ell_2$, and this follows from the fact that there are continuum-many orthogonal projections and the simplicity of $\IB/\IK$ (See \cite{Tak68}).

    Fix $0<\ve<\frac14$.
    The unit ball of $\Ql\Univ$ is covered by the WOT-closed sets for $d$
    \[\mathrm{QL}(\ve,d)\coloneqq\set{T\in\IB(\ell_2)\setcolon\norm{T}\leq1,\ T\text{ is $\ve$-$d$-quasi-local}}\]
    as shown in \cite[Lem. 4.2]{BB+24}.
    By the Baire category theorem, there exists $d\in\IN$ such that $\pi^{-1}(\mathrm{QL}(\ve,d))\subset\IB(\cH)_{\leq1}$ has a WOT-interior point.
    Hence, there exist a finite-rank projection $p\in\IB(\cH)$ and $T_0\in\IB(p\cH)_{\leq1}$ such that $T_0+T\in\pi^{-1}(\mathrm{QL}(\ve,d))$ for any $T\in\IB((1-p)\cH)_{\leq1}$.
    By $2\pi(T)\in\pi^{-1}(\mathrm{QL}(\ve,d)-\mathrm{QL}(\ve,d))$, we have $\pi(T)\in\mathrm{QL}(\ve,2d)$ for any $T\in\IB((1-p)\cH)_{\leq1}$.
    Restricting $\pi$ to $\IB((1-p)\cH)$, we may assume that $\pi\colon\IB(\cH)\to\Ql\Univ$ maps the unit ball into $\mathrm{QL}(\ve,2d)$.

    The normal $*$-homomorphism $\pi$ is represented as $\pi(a)=\sum_i V_iaV_i^*$, where $V_i\colon\cH\to\ell_2$ are isometries with orthogonal ranges.
    Fix $e\in\cH_{=1}$ and $V=V_1$.
    Choose a finite-supported vector $\xi\in\ell_2$ such that $\norm{\xi-Ve}\leq\ve,\norm{\xi}\leq1$.
    Since $\pi(\theta\otimes e^*)\in\mathrm{QL}(\ve,2d)$ for $\theta\in\cH_{=1}$, we have $F=F_\theta\subset\IN$ such that $\abs{F}\leq 2d\abs{\supp\xi}$ and $\norm{\Iin_{F^c}\pi(\theta\otimes e^*)\xi}\leq\ve$.
    We have $\pi(\theta\otimes e^*)Ve=V\theta$ because of orthogonal ranges, so $\norm{\Iin_{F^c}V\theta}\leq2\ve$ holds for every $x\in\IN$.
    Fix an orthonormal system $\set{\theta_n}_n\subset\cH$.
    Since $\theta_n\rightharpoonup0$ in the weak topology of $\cH$,
    by replacing $\set{\theta_n}$ with its subsequence,
    we may assume that there exists a sequence of pairwise disjoint finite subsets $E_n\Subset\IN$ such that $\norm{\Iin_{E_n^c}V\theta_n}\leq2^{-n}$.
    In particular, $\sum_{n=1}^\infty\abs{\inpro{\delta_x,V\theta_n}}\leq2$ holds.
    Set $\theta\coloneqq\frac1{\sqrt{m}}\sum_{n=1}^m\theta_n$ for sufficiently large $m$.
    By $\sup_x\abs{\inpro{\delta_x,V\theta}}\leq\frac{2}{\sqrt{m}}$, we obtain $\norm{\Iin_F V\theta}\leq2\sqrt{\frac{\abs{F}}{m}}$ for $F\Subset\IN$, which contradicts sparseness of $V\theta$ for sufficiently large $m$.
\end{proof}

\begin{rem}
    $\IB(\ell_2)$ embeds into $Q$ via the isometry $V\colon\ell_2\to\bigoplus_n\ell_2^n$ mapping the canonical basis $e_n$ to the $n$-th direct summand $\frac1{\sqrt{n}}\Iin\in\ell_2^n$ by the same argument as in Subsection~\ref{tensorJ}.
\end{rem}

\begin{lem}[See \cite{BGP26}]
    For a separable $\Cstar$-algebra $A$, $A$ embeds into $\Roe\Univ$ if and only if $A$ embeds into $\mathrm{C}^*_\text{r}G$ for some \'{e}tale locally compact second countable groupoid $G$.
\end{lem}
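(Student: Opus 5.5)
The plan is to prove the two implications separately. The direction from groupoids to $\Roe\Univ$ is a direct computation with regular representations. The converse requires shrinking the (non-metrizable) coarse groupoid of $\Univ$ to a second countable one, and that is where the main work lies.

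\emph{($\Leftarrow$)} Let $G$ be an étale locally compact second countable groupoid. I choose a countable dense set $D\subset G^{(0)}$ and consider $\lambda=\bigoplus_{x\in D}\lambda_x$ on $\bigoplus_{x\in D}\ell_2(G_x)$. The reduced norm is $\sup_{x\in G^{(0)}}\norm{\lambda_x(f)}$, and by lower semicontinuity of $x\mapsto\norm{\lambda_x(f)}$ the supremum over the dense set $D$ suffices. Hence $\lambda$ is faithful on $\mathrm{C}^*_{\mathrm r}G$. The index set $Z=\bigsqcup_{x\in D}G_x$ is countable; if it is finite the statement is trivial, so I identify $Z$ with $\IN$. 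Each $f\in C_c(G)$ is a finite sum of functions supported in open bisections. An open bisection $U$ acts on $Z$ by the partial injection $g\mapsto hg$, where $h\in U$ is the unique element with $s(h)=r(g)$. So $\lambda(f)$ has support contained in a union of finitely many graphs of partial injections, hence has finite width. Since $C_c(G)$ is dense, $\lambda(\mathrm{C}^*_{\mathrm r}G)\subset\Roe\Univ$, and $A\hookrightarrow\mathrm{C}^*_{\mathrm r}G\hookrightarrow\Roe\Univ$.

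\emph{($\Rightarrow$)} Let $A\subset\Roe\Univ$ be separable, with dense sequence $a_k$. Each $a_k$ is a limit of finite-width operators $T_{k,m}$. A finite-width set $\Sigma$ is a finite union of graphs of partial injections of $\IN$ (edge-colour the bipartite graph of bounded degree). Hence every $T_{k,m}$ is a finite sum $\sum_s f_s v_s$, where $f_s\in\ell_\infty$ and the $v_s$ are partial isometries implementing partial injections $s$.

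Let $S$ be the countable inverse semigroup of partial injections generated by all these $s$. Let $\mathcal D\subset\ell_\infty(\IN)$ be a separable unital $\Cstar$-subalgebra with the following properties:
\begin{itemize}
\item it contains all the coefficients $f_s$;
\item it contains the indicators of the domains of every $s\in S$;
\item it contains the indicators of the fixed-point sets $\{x\in\operatorname{dom}s : s(x)=x\}$;
\item it is invariant under $f\mapsto f\circ s$ on domains.
\end{itemize}
Such an algebra is obtained by a countable closure argument. Its spectrum $Y$ is compact metrizable and contains $\IN$ as a dense subset. Each $s$ extends to a partial homeomorphism between clopen subsets of $Y$. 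The groupoid of germs $G=S\ltimes Y$ is étale and second countable. It is Hausdorff because the fixed-point sets were made clopen.

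It remains to show that $\mathrm{C}^*_{\mathrm r}G$ is the norm closure $B$ of $\operatorname{span}\{f v_s: f\in\mathcal D,\ s\in S\}$ in $\IB(\ell_2)$. For $x\in\IN$ the isotropy is trivial: a germ of $s$ fixing $x$ has $x$ in the clopen fixed set of $s$, so the germ is a unit. Consequently $\lambda_x$ is unitarily equivalent to the restriction of the identity representation of $B$ to $\ell_2$ of the $S$-orbit of $x$. Since $\IN$ is dense in $Y$, $\sup_{x\in\IN}\norm{\lambda_x}$ is the reduced norm, and $\ell_2(\IN)$ is the direct sum of the orbit spaces. Therefore the identity representation induces an isomorphism $\mathrm{C}^*_{\mathrm r}G\cong B\supset A$.

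The main obstacle I expect is this last identification. It needs to be arranged so that the reduced norm is computed on $\IN$, which uses lower semicontinuity and density, and so that no non-Hausdorff or isotropy issues appear. This is exactly why the fixed-point indicators are built into $\mathcal D$.
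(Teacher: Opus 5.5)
\textbf{Gap in ($\Rightarrow$).} The argument breaks at the claim that ``its spectrum $Y$ \dots contains $\IN$ as a dense subset''. Your four requirements on $\mathcal D$ do not force $\mathcal D$ to separate the points of $\IN$. So $x\mapsto\mathrm{ev}_x$ has dense image in $Y$ but need not be injective, and everything afterwards uses injectivity. Your Hausdorffness and isotropy arguments treat $G$ as the groupoid of germs of the extended maps $\hat s$. For that groupoid, the step $\hat s(\mathrm{ev}_x)=\mathrm{ev}_x\Rightarrow s(x)=x$ fails, and so does $G_{\mathrm{ev}_x}\cong Sx$.

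Here is a concrete counterexample. Let $s$ be the transposition of $0$ and $1$, viewed as a partial injection with domain $\{0,1\}$, and let $S$ be generated by $\mathrm{id}_{\IN}$ and $s$. Let $A=\operatorname{span}\{1,\Iin_{\{0,1\}},v_s\}\cong\IC^3$; its elements already have finite width. Then $\mathcal D=\operatorname{span}\{1,\Iin_{\{0,1\}}\}$ meets all your requirements:
\begin{itemize}
\item it contains every coefficient;
\item it contains every domain and fixed-point indicator (the fixed sets are $\IN$, $\{0,1\}$, $\emptyset$);
\item it is $S$-invariant.
\end{itemize}
But $Y$ has only two points, and $\hat s$ is the identity on the isolated point $\widehat{\{0,1\}}$. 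Hence $G=Y$ and $\mathrm{C}^*_{\mathrm r}G\cong\IC^2$, which cannot contain $A$.

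If you instead meant Exel's $S\ltimes Y$, where germs are identified only through idempotents of $S$, this particular example is harmless. In that case, however, your step ``so the germ is a unit'' requires the partial identities on fixed-point sets to belong to $S$, which you did not impose. For instance, take a permutation $\theta$ of infinite order fixing only $0$. Then the isotropy of $S\ltimes Y$ at $0$ is $\mathbb{Z}$, while $\Iin_{\{0\}}v_\theta=\Iin_{\{0\}}$ in $\IB(\ell_2)$, so the identity representation is not faithful on $\mathrm{C}^*_{\mathrm r}G$.

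\textbf{The fix.} Add all $\delta_x$, $x\in\IN$ (that is, $c_0(\IN)$), to the generators of $\mathcal D$. This keeps $\mathcal D$ separable and $S$-invariant, since $\delta_x\circ s=\delta_{s^{-1}(x)}$ on $\operatorname{dom}s$. Then $\IN$ is an open dense set of isolated points of $Y$. In the germ groupoid of the $\hat s$, a germ at $x\in\IN$ is determined by $s(x)$, so the isotropy on $\IN$ is trivial and $G_x\cong Sx$. Your fixed-point indicators now give exactly Hausdorffness, because the unit-germ set of $\hat s$ is the clopen set $\widehat{\operatorname{Fix}s}$. Hausdorffness gives lower semicontinuity, and the rest of your argument goes through.

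This is precisely what the paper's reduction ``we may assume $\IK\subset A$'' accomplishes: it puts $c_0$ into the diagonal $\ell_\infty\cap A$. The paper then concludes with Renault's Cartan-subalgebra theorem. Your repaired route instead builds the germ groupoid by hand and computes the reduced norm directly on the dense set $\IN$, avoiding the Cartan verification.

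\textbf{Minor point in ($\Leftarrow$).} Lower semicontinuity of $x\mapsto\norm{\lambda_x(f)}$ is a Hausdorff phenomenon. It can fail for non-Hausdorff étale groupoids, which the statement does not exclude. The paper sidesteps this by using separability to choose countably many units on which $A$ acts faithfully, and you can do the same.
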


\begin{proof}
    First suppose that $A$ embeds into $\mathrm{C}^*_\text{r}G$.
    Since $A\subset\mathrm{C}^*_\text{r}G$ is represented on $\bigoplus_{x\in G^{(0)}}\ell_2(G_x)$, we fix a countable set $F\subset G^{(0)}$ such that $A\acts\bigoplus_{x\in F}\ell_2(G_x)$ is faithful.
    $G_x$ is countable by discreteness and the images of elements of $C_c(G)\subset \mathrm{C}^*_\text{r}G\subset\IB(\ell_2(\bigsqcup_{x\in F}G_x))$ are bounded-width operators by compactness of their supports.

    Conversely, suppose that $A\subset\Roe\Univ$.
    We may assume that $\IK\subset A$.
    Every $d$-width operator is a sum of at most $2d-1$ $1$-width operators by the edge-coloring.
    Every $1$-width operator is a linear combination of at most $O(\ve^{-1})$ partial translations (that is, $1$-width operators with $0,1$-entries) up to $\ve$ in the operator norm.
    Replacing $A$ with a larger one, we may assume $A$ is generated by countably many partial translations $s_i\colon\mathrm{dom}(s_i)\to\mathrm{cod}(s_i)$.
    Moreover, we may assume that all fixed point projections of products of $\{s_i\},\{s_i^*\}$ belong to $A\cap\ell_\infty$.
    Hence, $\ell_\infty\cap A\ (\supset c_0)$ is the Cartan subalgebra of $A$ and $A$ is the reduced $\Cstar$-algebra of the groupoid generated by $s_i$ on the Gelfand dual of $\ell_\infty\cap A$ \cite{Ren08}.
    Equivalently, $G$ can be described explicitly as the translation groupoid of \cite{Roe03}.
    In this construction the twist is trivial.
\end{proof}

\setcounter{thmA}{0}
\begin{corA}
    Let $A$ be a separable quasidiagonal $\Cstar$-algebra.
    Then $A$ embeds into $\Roe\Univ$ and $\mathrm{C}^*_{\text{r}}G$ for some \'{e}tale locally compact second countable groupoid $G$.
\end{corA}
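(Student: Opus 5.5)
The second assertion follows from the first by the preceding lemma (from \cite{BGP26}), so the plan is to show that every separable quasidiagonal $A$ embeds into $\Roe\Univ$. The idea is to combine quasidiagonality with Theorem~A. Since $A$ is separable and quasidiagonal, there is an injective $*$-homomorphism
\[A\hookrightarrow\prod_k\IM_{n_k}\Big/\bigoplus_k\IM_{n_k}\]
coming from asymptotically multiplicative, asymptotically isometric c.p.c.\ maps $\phi_k\colon A\to\IM_{n_k}$. Theorem~A gives an embedding $\Phi\colon\prod_n\IM_n\to\Roe\Univ$. The whole difficulty is the quotient by $\bigoplus_k\IM_{n_k}$, since the direct sum is not a lift. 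To avoid it, I would use a faithful representation of $A$ realized by block-tridiagonal operators, as in the proof of the previous proposition, instead of working with the quotient.

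\textbf{Step 1.} Tensor with $\IK$ (quasidiagonality is preserved), so that $A$ is generated by two self-adjoints $a,b$ \cite{OZ76}. Since $A$ is quasidiagonal, it has a faithful essential representation on $\cH$ that is quasidiagonal. By Voiculescu's theorem this representation may be taken to be a quasidiagonal \emph{set} of operators. Concretely, there are finite-rank projections $q_m\nearrow1$ with $\norm{[q_m,x]}\to0$ for $x$ in a dense sequence.

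\textbf{Step 2.} Pass to subsequences and set $\cH_m\coloneqq(q_{m}-q_{m-1})\cH$. Then $a$ and $b$ are small perturbations, modulo compacts, of operators that are block diagonal with respect to $\cH=\bigoplus_m(\cH_{2m}\oplus\cH_{2m+1})$. Equivalently, $a=a_0+k_a$ with $a_0\in\prod_m\IB(\cH_{2m}\oplus\cH_{2m+1})$ and $k_a$ compact, and similarly for $b$. The same holds for a dense $*$-subalgebra of $A$; alternatively, one uses the block-tridiagonal form from the Voiculescu lemma, with the odd/even splitting as in the previous proposition.

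\textbf{Step 3.} Embed each block by the random isometries of Theorem~A. Choose a partition $\IN=\bigsqcup_m Y_m$ with $\frac{1}{\dim}\log\abs{Y_m}\to\infty$, and let $V=\bigoplus_m V_m$ be the direct sum of random isometries $V_m\colon\cH_{2m}\oplus\cH_{2m+1}\to\ell_2(Y_m)$. By Theorem~A and the Borel--Cantelli argument in its proof, $T\mapsto VTV^*$ maps $\prod_m\IB(\cH_{2m}\oplus\cH_{2m+1})$ into $\Roe\Univ$. It also maps compacts into $\Roe\Univ$, since finite-rank operators are finite-width. Hence $\pi(x)=VxV^*$ lies in $\Roe\Univ$ for the generators, and therefore $\pi(A)\subset\Roe\Univ$. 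The map $\pi$ is injective because $V$ is an isometry.

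The main obstacle is Step~2. Quasidiagonality only makes $a$ and $b$ \emph{approximately} block diagonal, up to small errors, whereas $\Roe\Univ$ membership requires each generator to be exactly of the form (block-diagonal part) $+$ (something already in $\Roe\Univ$). I would handle this by choosing $q_m$ along a sparse subsequence so that the off-diagonal errors $\norm{[q_m,x]}$ are summable. The tridiagonal remainder is then a norm-convergent sum of finite-rank pieces. Alternatively, I would apply the block-tridiagonal lemma directly: the even/odd decomposition writes each generator as a sum of two block-diagonal operators, for two different block partitions, and Theorem~A handles both, exactly as in the preceding proposition. In this route the dimension of each block must be small relative to $\log\abs{Y_m}$, which is arranged by choosing the $Y_m$ large enough.
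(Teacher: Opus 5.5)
Your main route is correct and is essentially the paper's proof. You pass to a sparse, rapidly increasing subsequence of the quasidiagonal projections so that $A$ sits inside $\IK+\prod_m\IB(\cH_m)$ with finite-dimensional blocks. Conjugating by the direct sum of the random isometries from Theorem~A then sends the block-diagonal part into $\Roe\Univ$ and $\IK$ into $\IK\subset\Roe\Univ$, and the groupoid statement follows from the preceding lemma.

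Two small corrections. First, finite-rank operators need not have finite width; only finitely supported matrices do, and these are norm-dense in $\IK$, which is the correct reason $\IK\subset\Roe\Univ$. Second, you should drop the tridiagonal ``alternative'': Theorem~A concerns a single Haar isometry and does not by itself control the cross terms $V_nTV_{n+1}^*$ that arise from a direct sum of independent ones.
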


\begin{proof}
    This is a corollary of Theorem A.
    Passing to a sufficiently rapidly increasing subsequence of the quasidiagonal projections,
    we obtain a representation $\pi\colon A\hookrightarrow\IB(\bigoplus_i\ell_2^{n_i})$ such that the image of $A$ is contained in $\IK+\prod_i\IM_{n_i}$.
    The embedding $\prod_i\IM_{n_i}\to\Roe\Univ$ is implemented by $\Ad V:\IB(\bigoplus_i\ell_2^{n_i})\to\IB(\ell_2)$.
    This maps $\IK(\bigoplus_i\ell_2^{n_i})$ into $\IK\subset\Roe\Univ$, so this also maps $\pi(A)$ into $\Roe\Univ$.
\end{proof}

\end{document}